\newtheorem{thm}{Theorem}[section]
\newtheorem{lem}[thm]{Lemma}
\newtheorem{conj}[thm]{Conjecture}
\newtheorem{cor}[thm]{Corollary}
\newtheorem{prop}[thm]{Proposition}
\def \para{\refstepcounter{thm} \par\medskip\noindent
                \textbf{\thethm .} }
\def \remark{\refstepcounter{thm} \par\medskip\noindent
                \textbf{Remark \thethm .} }
\numberwithin{equation}{thm}
\def\<{\langle}
\def\>{\rangle}
\newcommand{\ZZ}{\mathbb{Z}}
\newcommand{\cO}{\mathcal{O}}
\newcommand{\sH}{\mathscr{H}}
\newcommand{\fS}{\mathfrak{S}}
\newcommand{\ft}{\mathfrak{t}}
\newcommand{\ba}{\mathbf{a}}
\newcommand{\vG}{\varGamma}
\newcommand{\ra}{\rightarrow}
\newcommand{\LRa}{\Leftrightarrow}
\newcommand{\wt}{\widetilde}
\newcommand{\HInd}{\,^{\sH} \hspace{-1mm} \operatorname{Ind}}
\newcommand{\HRes}{\,^{\sH} \hspace{-1mm} \operatorname{Res}}
\newcommand\cmod{\operatorname{-mod}}
\newcommand{\Hom}{\operatorname{Hom}}
\newcommand{\Ext}{\operatorname{Ext}}
\newcommand{\End}{\operatorname{End}}
\newcommand{\proj}{\operatorname{-proj}}
\newcommand\ve{\varepsilon}
\newcommand{\fC}{\mathfrak{C}}
\newcommand{\fD}{\mathfrak{D}}
\newcommand{\bk}{\mathbf{k}}
\newcounter{ichi}
\newcommand{\roi}{\roman{ichi}}
\newcounter{ni}
\newcommand{\roii}{\roman{ni}}
\newcounter{san}
\newcommand{\roiii}{\roman{san}}
\newcounter{yon}
\newcommand{\roiv}{\roman{yon}}
\newcounter{go}
\newcommand{\rov}{\roman{go}}
\newcounter{roku}
\newcounter{nana}
\newcounter{hachi}
\newcounter{kyu}
\newcommand{\frh}{\mathfrak{h}}
\newcommand{\C}{\mathbb{C}}
\newcommand{\calD}{\mathcal{D}}
\newcommand{\sA}{\mathcal{A}}
\newcommand{\cS}{\mathcal{S}}
\DeclareMathOperator{\gEnd}{End}
\newcommand{\OInd}{\,^{\cO} \hspace{-0.5mm} \operatorname{Ind}}
\newcommand{\ORes}{\,^{\cO} \hspace{-0.5mm} \operatorname{Res}}
\DeclareMathOperator{\KZ}{KZ}
\DeclareMathOperator{\Id}{Id}
\DeclareMathOperator{\Image}{Im}
\begin{document}


\setlength{\baselineskip}{4.9mm}
\setlength{\abovedisplayskip}{4.5mm}
\setlength{\belowdisplayskip}{4.5mm}


\renewcommand{\theenumi}{\roman{enumi}}
\renewcommand{\labelenumi}{(\theenumi)}
\renewcommand{\thefootnote}{\fnsymbol{footnote}}
\renewcommand{\thefootnote}{\fnsymbol{footnote}}
\parindent=20pt


\setcounter{section}{-1}



\address{T. Kuwabara : Division of Mathematics, Faculty of Pure and Applied Sciences, 
University of Tsukuba, Tsukuba, Ibaraki 305-8571, Japan.} 
\email{kuwabara@math.tsukuba.ac.jp}

\address{H. Miyachi : Department of Mathematics, Osaka City University, 3-3-138 Sugimoto, Sumiyoshi-ku, Osaka 558-8585, Japan} 
\email{miyachi@sci.osaka-cu.ac.jp}

\address{K. Wada : Department of Mathematics, Faculty of Science, Shinshu University, 
		Asahi 3-1-1, Matsumoto 390-8621, Japan}
\email{wada@math.shinshu-u.ac.jp}



\medskip
\begin{center}
{\large \textbf{On  the Mackey formulas  for cyclotomic Hecke algebras 
	and categories $\cO$ of rational Cherednik algebras }}  
 \\
 \vspace{1cm}
Dedicated to Toshiaki Shoji on the occasion of his 70th birthday.   \\
 \vspace{1cm}
Toshiro Kuwabara, Hyohe Miyachi and Kentaro Wada 
\\[1em]
\end{center}


\title{} 
\maketitle 

\markboth{T.Kuwabara, H.Miyachi and K.Wada}{The Mackey formulas}



\begin{abstract}
In this paper, we shall establish the Mackey formulas in the following two set ups:
\begin{enumerate}
 \item  on the tensor induction and restriction functors on the modules over cyclotomic Hecke algebras (Ariki-Koike algebras) and their standard subalgebras of parabolic subgroups.
 \item  on the Bezrukavnikov-Etingof induction and restriction functors \cite{BE} among categories $\cO$ \cite{GGOR} of rational Cherednik algebras for
the complex reflection group of type $G(r,1,n)$ and their parabolic
		subgroups.
\end{enumerate}
\end{abstract}


\section{Introduction} 

The Mackey formula \cite{Mac},\cite[p.273]{CR} plays a very important role in representation theory
:
\begin{equation}\label{originalMackey}
{\mathrm{Res}}_L \circ {\mathrm{Ind}}^G (M) \cong
\bigoplus_{w \in L\backslash G / H}{\mathrm{Ind}}^L \circ {\mathrm{Res}}_{L \cap w H w^{-1}} (w \otimes M)
\end{equation}
for a finite group $G$, its subgroups $H,L$ and
$H$-module $M$.
In modular representation theory of finite groups, Green's vertex theory
is based on this formula [loc.cit].

In finite reductive groups, Dipper
and Fleischmann \cite[(1.14) Theorem]{MR1179779} established 
the Mackey formula on the Harish-Chandra induction and restriction for
Levi subgroups, and used it as an important base
 for their modular Harish-Chandra theory.
And, also in finite reductive groups,
the
Mackey formula on the Deligne-Lusztig induction and restriction has a
very important implication for the Lusztig conjecture on the characters
on these groups which is developed by C. Bonnaf\'e (see
\cite{Bon00}). This is an extension of Mackey formula on the
Harish-Chandra induction and restriction, although it is at the level of
characters.
So, the Mackey formula is subject to a subgroup lattice $\Lambda$ and a family of two kinds of functors ${\mathrm{Ind}}_A^B$ and $\mathrm{Res}_A^B$ labeled by the pairs $(A,B)$ with $A \subset B$
in this lattice $\Lambda$.

In this paper we shall report yet another Mackey formula for the case where $\Lambda$ is a set of parabolic subgroups of a complex reflection group.
More precisely, we shall tackle proving the following conjecture:
\begin{conj}[The Mackey formula for $\cO$]\label{ourconj}
For any finite complex reflection group $W$, and its parabolic subgroups $W_a$ and $W_b$,
 the Mackey formula with respect to the Bezrukavnikov-Etingof induction and restriction holds.
More precisely, at the level of representation categories, we have the
 following isomorphism of functors $:$
\begin{equation*}
  \ORes^W_{W_a} \circ \OInd^W_{W_b}
 \cong \bigoplus_{u \in {}^{a} W^{b}} 
\OInd^{W_a}_{W_a \cap u W_b u^{-1}} \circ\, u (-) \circ
 \ORes^{W_b}_{u^{-1} W_a u \cap W_b }, 
\end{equation*}
where 
${}^{a} W^{b}$ is a complete set of double coset representatives of $W_a \backslash W / W_b$.
\end{conj}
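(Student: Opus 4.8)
The plan is to establish the formula uniformly in $W$ by working entirely with the completion-theoretic definition of the Bezrukavnikov--Etingof functors, so as to avoid the KZ functor and any dependence on the combinatorics of a particular series. Fix points $a,b\in\frh$ with $\operatorname{Stab}_W(a)=W_a$ and $\operatorname{Stab}_W(b)=W_b$. I would use three structural inputs, all available for an arbitrary complex reflection group: the Bezrukavnikov--Etingof isomorphism of completed Cherednik algebras $\widehat{H_c(W)}_{Wa}\cong\operatorname{Mat}_{[W:W_a]}\bigl(\widehat{H_c(W_a)}_0\bigr)$, which realizes $\ORes^W_{W_a}$ as ``complete at the orbit $Wa$, pass through the Morita equivalence, and take the $\frh$-locally nilpotent ($\cO$-finite) part,'' with $\OInd^W_{W_a}$ its biadjoint; the transitivity isomorphisms $\ORes^{W'}_{W''}\circ\ORes^{W}_{W'}\cong\ORes^{W}_{W''}$ for nested parabolics $W''\subseteq W'\subseteq W$, together with the dual statement for induction; and the $W$-equivariance that, for $u\in W$, identifies $\cO(W_b)$ with $\cO(uW_bu^{-1})$ via the conjugation functor $u(-)$ compatibly with restriction and induction. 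Since $\ORes^W_{W_a}(M)$ depends only on the completion $\widehat{M}_{Wa}$, the whole computation reduces to understanding the completed module $\widehat{\bigl(\OInd^W_{W_b}N\bigr)}_{Wa}$ as a module over $\widehat{H_c(W)}_{Wa}$.

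The heart of the argument is a double-coset decomposition of this completion. Geometrically, the induction functor from $b$ is built along the orbit $Wb\cong W/W_b$, while restriction at $a$ localizes along $Wa$; the two constructions are glued by the left action of $W_a$ on $W/W_b$, whose orbits are exactly the double cosets ${}^aW^b=W_a\backslash W/W_b$. For a representative $u\in{}^aW^b$, the $W_a$-stabilizer of the coset $uW_b$ is $W_a\cap uW_bu^{-1}$, while the point $u^{-1}a$ has $W_b$-stabilizer $u^{-1}W_au\cap W_b$; these are precisely the nested parabolics in the formula. I would show that the completed induction bimodule splits as a direct sum indexed by ${}^aW^b$, the $u$-summand being the bimodule that implements $\OInd^{W_a}_{W_a\cap uW_bu^{-1}}\circ u(-)\circ\ORes^{W_b}_{u^{-1}W_au\cap W_b}$: the inner restriction extracts the local behaviour of $N$ near $u^{-1}a$, the conjugation $u(-)$ transports it from the slice through $b$ to the slice through $a$, and the outer induction reassembles the $W_a$-orbit through $ub$ inside the completion at $Wa$. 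Matching the two BE isomorphisms — the one at $b$ defining $\OInd^W_{W_b}$ and the one at $a$ defining $\ORes^W_{W_a}$ — on the transverse slice through $ub$, and invoking transitivity to factor the completion at $Wa$ through the intermediate parabolic $W_a\cap uW_bu^{-1}$, identifies each summand with the desired composite.

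Finally I would take the $\cO$-finite part of this decomposition; since that operation is additive and the splitting is $\widehat{H_c(W)}_{Wa}$-linear, it distributes over the direct sum and produces exactly the right-hand side of the conjectured isomorphism. The main obstacle I anticipate lies in two places. First, $\OInd^W_{W_b}N$ is not supported on the orbit $Wb$ — it sends standards to standards, which are free over $\C[\frh]$ — so the completion at $Wa$ is not literally a sum of contributions from points of $Wb$; the double-coset splitting must instead be read off from the combined support in $\frh\times\frh^{*}$, which forces one to run the argument as a genuinely simultaneous completion at the two orbits rather than a naive localization. Second, and relatedly, one must verify that the structural isomorphisms underlying the BE construction at the two distinct points $a$ and $b$ are mutually compatible on the overlap, so that the matrix and induction bimodules agree on the nose and the passage to the $\cO$-finite part commutes with the splitting; tracking the $W$-equivariance of these completed-algebra isomorphisms precisely enough to pin down the twist $u(-)$ and the two nested parabolics simultaneously is where the real work, uniform in $W$, will concentrate.
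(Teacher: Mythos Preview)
This statement is a \emph{conjecture} in the paper, not a theorem; the paper proves it only for $W=W_{n,r}$ of type $G(r,1,n)$ (Theorem~\ref{thm:Mackey-RCA}). The paper's method for that case is entirely different from yours: it first establishes a Mackey formula for the cyclotomic Hecke algebras $\sH_{n,r}$ via explicit double-coset bases (Theorem~\ref{H Thm Mackey}), then lifts it to $\cO$ through the KZ functor and the double-centralizer property (Propositions~\ref{prop:H-Mackey-RCA} and \ref{prop:twist-KZ}). The crucial technical ingredient is a specific choice of representatives ${}^{(l,\mu)}W^{(m,\nu)}$ for which $W_a\cap uW_bu^{-1}$ is again a \emph{standard} parabolic and for which the commutation $s_j u = u s_{\psi(j)}$ follows from braid relations alone (Proposition~\ref{W prop only braid}); this is precisely what matches the Hecke-side twist $T_u(-)$ with the conjugation functor $u(-)$ on $\cO$, and no analogue is available for a general complex reflection group --- which is why the general statement remains open.

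Your completion-theoretic strategy bypasses the Hecke algebra entirely and, if it worked, would settle the conjecture uniformly; but as written it is a plan, not a proof, and the two ``obstacles'' you list at the end are the whole problem rather than residual details. The first is decisive: $\OInd^W_{W_b}N$ is free over $\C[\frh]$, so its completion at $Wa$ carries no intrinsic decomposition indexed by any finite subset of $Wb$; the asserted double-coset splitting of the completed induction bimodule needs an actual source --- idempotents in $\widehat{H_c(W)}_{Wa}$ cutting out the $u$-summands --- and you supply no mechanism for producing them. The second obstacle, compatibility of the BE isomorphisms at $a$ and at $b$ together with commutation of the $\cO$-finite-part functor with your (as yet hypothetical) splitting, is exactly the step that the paper's braid-relation argument and Shan's transitivity results are designed to handle in the cases where the conjecture is known. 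Your outline does not contain the missing idea that would resolve either issue, so it does not constitute a proof of the conjecture.
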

Here,
$\ORes^{W}_{W'}$ (resp. 
$\OInd^{W}_{W'}$
) is the Bezrukavnikov-Etingof
restriction (resp. induction) functor \cite{BE} 
and $u(-)$
 is the functor naturally induced by a conjugation (automorphism) by $u
 \in W$.
 
We write $W_{n,r}$ for the complex reflection group of type $G(r,1,n)$ in Shephard-Todd notation. We denote the set of standard generators of $W_{n,r}$ by $S=\{s_0,s_1,\ldots,s_{n-1}\}$
where $s_0$ (resp. $s_i$) has the order $r$ (resp. $2$),$s_0s_1s_0s_1=s_1s_0s_1s_0$ and $s_0s_j=s_js_0$ for $j>1$ and $s_1,\ldots, s_{n-1}$ satisfy the braid relation for the symmetric group $\fS_n$ on $n$ letters.
In this paper, we shall study the Mackey formula for the cyclotomic Hecke algebra $\sH_{n,r}=\<T_0,T_1,\ldots,T_{n-1}\>$ (see \ref{H Def} for the precise definition) 
and the categories $\cO$ of cyclotomic rational Cherednik algebras associated with $W_{n,r}$ (in so-called $t=1$ case).
 
In this paper, we shall establish the Mackey formulas in the following two set ups:
\begin{enumerate}
 \item  $\Lambda$ is the set of standard parabolic subgroups of  $W_{n,r}$ and 
${\mathrm{Ind}}_A^B$ is the tensor induction functor and ${\mathrm{Res}}_A^B$ is the restriction functor between Hecke algebras associated with $A$ and $B$ in $\Lambda$.

\item  $\Lambda$ is the set of parabolic subgroups of $W_{n,r}$.
The induction and restriction are 		
the Bezrukavnikov-Etingof induction $\OInd$ and restriction $\ORes$
		\cite{BE} respectively 
		among categories $\cO$ of cyclotomic rational Cherednik algebras for
the complex reflection group $W_{n,r}$ and their parabolic subgroups.
\end{enumerate}
The precise statement of (i) is
Theorem \ref{H Thm Mackey}.
The precise statement of (ii) is
Theorem \ref{thm:Mackey-RCA}, which supports Conjecture~\ref{ourconj}.
The part (i) is given in a characteristic-free manner, even holds over 
$\ZZ[q,q^{-1},Q_1, \dots,Q_r]$, where $q,Q_1,\dots,Q_r$ are indeterminate over $\ZZ$. 
On the contrary, the part
(ii) heavily depends on the coefficient field ${\mathbb{C}}$,
due to the use of KZ-functor, Riemann-Hilbert correspondence.
Next, we make remarks on the subgroup lattice $\Lambda$: Let $W$ be a complex
reflection group, and let $\frh$ be the reflection
$\C$-representation of $W$. By a parabolic subgroup of $W$, we mean a
stabilizer, in $W$, of some point in $\frh$.
We mean by a standard parabolic
subgroup of $W$ a special parabolic subgroup $\< I \>$ of $W$ for some
subset $I$ of the set of simple reflections.

Very briefly we remark some known results related to the above (i) and (ii):
In \cite[2.29]{MR948191}, the Mackey formula on the $1$-parameter Iwahori-Hecke algebras can be found.
In \cite{MR1925135}, the Mackey formula on the cyclotomic Hecke
algebras for the maximal co-rank 1 cases are treated, namely, it is with respect to
two identical
subgroups $W_a=W_{n-1,r}$ and $W_b=W_{n-1,r}$ of $W_{n,r}$.
Since in our set up we can take any two standard parabolic subgroups, the part (i) is a strong generalization of her result.
In \cite[Lemma 2.5]{SV}, at the level of the Grothendieck group the part
(ii) is considered. 
However, this is a consequence of Mackey's original formula (\ref{originalMackey}).
In \cite[Theorem 2.7.2]{LS}, they established Mackey formula for the categories
$\cO$ of rational Cherednik algebras or Coxeter groups.

In the case where $W$ is a finite Coxeter group, 
to obtain the Mackey formula for corresponding Hecke algebras, 
we discuss by using reduced expressions of group elements, the distinguished minimal coset representatives 
and their properties. 
However, 
in the case where $W$ is a complex reflection group which is not a Coxeter group, 
we have not enough properties for reduced expressions of group elements, 
and we do not know a good choice of coset representatives. 
These lacks of theory for complex reflection groups cause difficulty to obtain the Mackey formula for cyclotomic Hecke algebras. 
In this paper, we give a solution of this problem for  complex reflection groups of type $G(r,1,n)$.

Regarding on applications, 
as in first paragraph, the role of the Mackey formula in rational Cherednik algebras similar
to the one in \cite{MR1179779,MR1249581} is expected. And, an obvious
application is for a study of cohomology groups $\Ext_{\cO(W)}^i(\OInd^{W}_{W_a}(M),
\OInd^{W}_{W_b}(N))$ via Eckmann-Shapiro lemma
\begin{multline*}
 \Ext^i_{\cO(W)}(\OInd^{W}_{W_a}(M),
\OInd^{W}_{W_b}(N)) \\
\cong \bigoplus_{u \in {}^{a} W^{b}} 
\Ext^i_{\cO(W_a)}(
M,
\OInd^{W_a}_{W_a \cap u W_b u^{-1}} \circ\, u (-) \circ
 \ORes^{W_b}_{u^{-1} W_a u \cap W_b } N).
\end{multline*} 
Here, $\cO(W')$ is the category $\cO$ for a complex reflection group $W'$ defined in \cite{GGOR}. 
Especially, it is useful to study the endomorphism ring of an induced module.
For a parabolic subgroup $W_b$ of $W$ with $X$ being finite dimensional simple in $\cO(W_b)$,
the endomorphism ring $\End_{\cO(W)}(\OInd_{W_b}^W(X))$ is studied in \cite{LS}.
They call it a generalized Hecke algebra (see \cite[Theorem 3.2.4, Definition 3.2.5]{LS}).
Their strategy is very traditional like \cite{MR570873, MR716849}, 
but tactics is new, such as geometrical properties of the categories $\cO$.
In the case where $W$ is a Coxeter group, they obtained an explicit description of
the generalized Hecke algebra. In their argument, Mackey formula has an important role.

This paper is organized as follows. 

In \S \ref{symm}, we review some known facts on symmetric groups, 
and also give a technical result (Lemma \ref{S Lemma}) which shall be  used in \S \ref{cpxref}. 

In \S \ref{cpxref}, we shall determine a complete set
of representatives of double cosets $W_1\backslash W_{n,r} /W_2$ over two
standard parabolic subgroups $W_1,W_2$ of $W_{n,r}$. 
Throughout this paper, 
we use a expression of elements of $W_{n,r}$ being along  the semidirect product $W_{n,r} = \fS_n \ltimes (\ZZ/ r \ZZ)^n$. 
This expression will be used in \S \ref{H Section} 
to construct a basis of the cyclotomic Hecke algebra associated with $W_{n,r}$, so called Ariki-Koike basis. 
Our coset representatives are compatible with this expression, 
and they have a good behavior in the arguments for Hecke algebras. 
One of important property of our coset representatives is appeared in Proposition \ref{W Prop Wkupiu}. 
In this proposition, 
for each our representative $u$ of $W_1 \backslash W_{n,r} / W_2$, 
we prove that the subgroup $W_1 \cap u W_2 u^{-1}$ is a standard parabolic subgroup of $W_{n,r}$.  
Thanks to this fact, we can use Ariki-Koike basis in the discussion corresponding Hecke algebras. 
Another important property of our coset representatives is appeared in Proposition \ref{W prop only braid}. 
In this proposition, we prove that a certain identity of two expressions of a certain element of $W_{n,r}$ 
follows only from braid relations associated with $W_{n,r}$. 
This fact has important role to consider a certain twisting functor along module categories of Hecke algebras, 
and its lifting to categories $\cO$ of corresponding rational Cherednik algebras as explaining below.
We remark that our coset representatives are not the distinguished minimal coset representatives 
in the case where $r=2$ (i.e. $W_{n,r}$ is  Weyl group of type $B_n$). 
Thus, our representatives are not a generalization 
of  the distinguished minimal coset representatives for finite Coxeter groups. 

In \S \ref{H Section}, 
we shall establish the Mackey formula for cyclotomic Hecke algebras of type $G(r,1,n)$ in Theorem \ref{H Thm Mackey}. 
The most important key fact  is the existence of specific
element $T_u$ satisfying Proposition~\ref{H Prop TuHmnu},
 which comes from the construction of a specific representative of double
 coset $W_1 u W_2$ satisfying Proposition~\ref{W prop only braid}.
 
In \S \ref{category O}, 
we review some known facts on categories $\cO$ of rational Cherednik algebras associated with any complex reflection groups,  
and also give some technical results.  
Then we give a lifting argument from the Mackey formula for cyclotomic Hecke algebras 
to one for categories $\cO$ of rational Cherednik algebras 
under some technical assumptions 
in Proposition \ref{prop:H-Mackey-RCA}. 
This lifting argument has been employed in \cite[Theorem 2.7.2]{LS} for
Coxeter group case.

In \S \ref{SectMackey-O-Wnr}, 
we shall establish the Mackey formula for categories $\cO$ of rational Cherednik algebras of type $G(r,1,n)$ in Theorem \ref{thm:Mackey-RCA}. 
We employ the identity given in Proposition \ref{W prop only braid}, which follows only from braid relations, 
to show the correspondent of the functor $T_u(-)$ in the rational Cherednik algebra is the twisting functor $u (-)$
as a lift. This is recorded in Proposition \ref{prop:twist-KZ}. 
Then we can apply the lifting argument given in Proposition \ref{prop:H-Mackey-RCA} 
to obtain the Mackey formula for categories $\cO$.

In Appendix A, 
a general lifting argument on functors through the double centralizer properties 
is recorded, which
will be used \S 4.

In Appendix B, we compare known results on the coset representatives in
\cite{RS}  with the ones in \S 2 for some special cases (i.e. the case where $\mu=(1^{n-l})$ and $\nu=(1^{n-m})$) as a sort of independent interest. 
We remark that the coset representatives in \cite{RS} follows from notion of root systems of type $G(r,1,n)$. However, in \cite{RS}, they give the coset representatives only for special cases 
where $\mu=(1^{n-l})$ and $\nu=(1^{n-m})$, 
and we do not know whether we can obtain the coset representatives by using root systems in general. 
We also remark that our coset representatives are not generalization of ones in \cite{RS} 
(see Remark \ref{Remark root system}). 
For the reader being only interested in the Mackey
formula, he or she can skip this appendix. 
\\

{\bf Acknowledgements:} 
The authors would like to thank Seth Shelley-Abrahamson for letting us notice their paper \cite{LS}.
There were some overlaps between {\em loc. cit.} and the original version of this paper.
The first author was supported by JSPS KAKENHI Grant Number JP17K14151. 
The third author was supported by JSPS KAKENHI Grant Number JP16K17565. 



\section{The symmetric groups} \label{symm}
In this section, we review some known results on symmetric groups 
which follow from the general theory of Coxeter groups 
(see e.g. \cite{H}, \cite[Chapter 4]{DDPW}) 
except \eqref{S-row-std} and Lemma \ref{S Lemma}. 


\para 
\label{S Def}
Let $\fS_n$ be the symmetric group on $n$ letters. 
We consider the natural left action of $\fS_n$ on $\{1,2,\ldots,n\}$.
So, when $x \in \fS_n$ sends $i$ to $j$, 
we denote it by $x(i)=j$. 
For $i=1,2, \ldots, n-1$, 
let $s_i = (i,i+1)$ be the adjacent transposition. 
Then $S=\{s_1,s_2,\dots, s_{n-1}\}$ is a set of simple reflections of $\fS_n$. 
For $x \in \fS_n$, we denote the length of $x$ by $\ell(x)$. 
We denote the Bruhat order on $\fS_n$ by $\geq$. 

For integers $k_1 \leq k_2 \in \ZZ$, 
we denote the integer interval $\{k_1, k_1+1,\dots, k_2\}$ by $[k_1,k_2]$. 
For $1 \leq k_1 \leq k_2 \leq n$, 
we denote by $\fS_{[k_1, k_2]}$ 
the subgroup of $\fS_n$ generated by $\{s_{k_1}, s_{k_1 +1}, \dots, s_{k_2-1}\}$, 
namely $\fS_{[k_1,k_2]}$ is the subgroup permuting the set $\{k_1, k_1+1,\dots, k_2\}$. 

A composition of $n$ is a sequence of non-negative integers $\mu=(\mu_1, \mu_2, \dots )$ 
such that $\sum_{i} \mu_i =n$, 
and we denote it by $\mu \vDash n$. 
We also denote $|\mu|= \sum_{i}\mu_i$. 

For $\mu=(\mu_1, \mu_2,\dots, \mu_l) \vDash n$, 
let $\fS_{\mu}$ be the parabolic subgroup of $\fS_n$ associated with $\mu$, 
namely $\fS_{\mu}$ is the subgroup of $\fS_n$ generated by 
\begin{align*} 
S_{\mu} := S \setminus \{s_j \mid j= \sum_{i=1}^{k} \mu_i \text{ for some } k \geq 1\}. 
\end{align*}
We have 
$\fS_{\mu} \cong \fS_{\mu_1} \times \dots \times\fS_{\mu_l}$. 
For $\mu \vDash n$, put 
\begin{align*}
& \fS^\mu =\{ x \in \fS_n \mid \ell (x s ) > \ell (x) \text{ for all } s \in S_{\mu} \}, 
\\
& ^\mu \fS = \{ x \in \fS_n \mid \ell ( s x  ) > \ell (x) \text{ for all } s \in S_{\mu} \}, 
\end{align*}
then 
$\fS^\mu$ (resp. $^\mu \fS$) is the set of distinguished coset representatives 
of the coset $\fS_n/\fS_{\mu}$ (resp. $\fS_{\mu} \backslash \fS_n$).  
In particular, we have 
\begin{align}
\begin{split} 
&\ell( x y ) = \ell (x) + \ell (y) \text{ for } x \in \fS^{\mu}, y \in \fS_{\mu}, 
\\ 
&\ell(x y) = \ell (x) + \ell(y) \text{ for } y \in \, ^\mu \fS, x \in \fS_{\mu}. 
\end{split}
\end{align}
For $\mu, \nu \vDash n$, 
put 
${}^\mu \fS^\nu = {}^\mu \fS \cap \fS^{\nu}$, 
then ${}^{\mu} \fS^{\nu}$ is a complete set of representatives of the double cosets 
$\fS_{\mu} \backslash \fS_n / \fS_{\nu}$.  

For $x \in \, ^\mu \fS^\nu$, 
let $\tau(x) \vDash n$ be the composition determined by the equation 
$S_{\tau(x)} = S_{\mu} \cap x S_{\nu} x^{-1}$.
Then it is known that $\fS_{\mu} \cap x \fS_\nu x^{-1}$ is generated by $S_{\tau(x)}$.  
In particular, we have 
$\fS_{\mu} \cap x \fS_\nu x^{-1} = \fS_{\tau(x)}$.
By the general theory of Coxeter groups, 
we see that 
$w \in \fS_n$ 
is uniquely written 
as $w = y x z$ ($x \in \, ^{\mu} \fS^{\nu}$, $y \in (\fS_{\mu})^{\tau(x)}$, $z \in \fS_{\nu}$), 
and 
we have 
\begin{align}
\label{S length doublecoset decom}
\ell(y x z) 
=\ell (y) + \ell (x) + \ell (z) 
\quad 
(x \in \, ^{\mu} \fS^{\nu}, \, y \in (\fS_{\mu})^{\tau(x)}, \, z \in \fS_{\nu}). 
\end{align}

\para 
The distinguished coset representatives 
$\fS^\mu$ (resp. $^\mu \fS$) 
is described by a standard combinatorics as follows. 
For $\mu \vDash n$, 
the diagram of $\mu$ is the set 
$[\mu]=\{(i, j) \in \ZZ_{\geq 0}^2 \mid i \geq 1, 1 \leq j \leq
\mu_i\}$.
 Here, we take the English fashion for treating the element of $[\mu]$,
 for example, we say that there are $\mu_i$ boxes in the $i$-th row of
 $[\mu]$, we also say that $(i,1)$ is the left most box of the $i$-th row if
 $(i,1) \in [\mu]$, etc.
For $\mu \vDash n$, a $\mu$-tableau is a bijection $\ft : [\mu] \ra \{1,2, \dots, n\}$. 
The symmetric group $\fS_n$ acts on the set of $\mu$-tableaux from left 
by permuting the entries inside a given tableau, 
namely, for $x \in \fS_n$ and $\mu$-tableau $\ft$, the defining equation
is 
\begin{align*}
(x \cdot \ft) (i,j) = x ( \ft(i,j)) \quad ((i,j) \in [\mu]).
\end{align*}

We say that a $\mu$-tableau $\ft$ is row-standard 
if $\ft(i,j) < \ft(i, j+1)$ for all $(i,j) \in [\mu]$ such that $(i,j+1) \in [\mu]$, 
namely if the entries in $\ft$ increase from left to right in each row. 

For $\mu \vDash n$, 
let $\ft^{\mu}$ be the $\mu$-tableau in which the integers $1,2,\dots, n$, 
are attached in the order from left to right and from top to bottom in $[\mu]$, 
namely we have 
\begin{align*}
\ft^{\mu} (i,j) = \sum_{k=1}^{i-1} \mu_k + j 
\quad ((i,j) \in [\mu]). 
\end{align*}
Then, we have 
\begin{align}
\label{S-row-std}
\begin{split}
&\fS^\mu = \{ x  \in \fS_n \mid x \cdot \ft^{\mu} \text{ is row-standard} \}, 
\\
&^\mu \fS = \{ x \in \fS_n \mid x^{-1} \cdot \ft^{\mu} \text{ is row-standard} \} 
\end{split}
\end{align}
(see \cite[Proposition 3.3]{M}). 

For a convenience in later arguments, 
for $0\leq l \leq n$ and $\mu \vDash n -l$, 
we put $(l, \mu)=(l, \mu_1, \mu_2, \dots) \vDash n$. 
Then, we have the following lemma: 

\begin{lem}
\label{S Lemma}
For $x \in \,^{(l,\mu)} \fS^{(m, \nu)}$ for some $0 \leq l,m \leq n$, $\mu \vDash n-l$, $\nu \vDash n-m$, 
put 
\begin{align*}
c= \min\{ i \geq 0 \mid x(i+1) \not= i+1 {\mbox{ or }} i=n\} 
\text{ and } 
k=\min\{c,l,m\}.
\end{align*}
Then we have 
$x \in \fS_{[k+1,n]}$ and $[1,l] \cap \{x(1), x(2), \dots, x(m)\} =[1, k]$. 
\end{lem}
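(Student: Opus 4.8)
The plan is to first pin down exactly what the integer $c$ records, then to dispatch the assertion $x \in \fS_{[k+1,n]}$ together with the easy half of the second assertion by elementary means, and finally to invoke the row-standard description \eqref{S-row-std} of one-sided distinguished representatives for the delicate reverse inclusion.

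First I would isolate the two facts about $c$ that I need: (a) $x(j)=j$ for all $1 \le j \le c$, and (b) if $x$ fixes $1,\dots,a$ pointwise for some $0 \le a \le n$, then $a \le c$. Both are immediate on unravelling $c = \min\{i \ge 0 \mid x(i+1) \ne i+1 \text{ or } i=n\}$: for (a), minimality of $c$ means the defining condition fails at $i = 0,1,\dots,c-1$, which forces $x(1)=1,\dots,x(c)=c$; (b) is the same observation read contrapositively, using $a \le n$ to kill the alternative ``$i=n$''. Now, since $k = \min\{c,l,m\} \le c$, fact (a) gives $x(j)=j$ for $1 \le j \le k$, so $x$ stabilises $\{k+1,\dots,n\}$ and hence $x \in \fS_{[k+1,n]}$. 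Moreover $\{x(1),\dots,x(k)\} = [1,k]$, and since $k \le l$ and $k \le m$ this yields $[1,k] \subseteq [1,l] \cap \{x(1),\dots,x(m)\}$, which is one of the two inclusions in the second assertion.

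For the reverse inclusion I would use \eqref{S-row-std}. Since $x \in \fS^{(m,\nu)}$, the tableau $x \cdot \ft^{(m,\nu)}$ is row-standard, and its first row, consisting of $x(1),\dots,x(m)$, must increase, so $x(1) < \cdots < x(m)$; since $x \in {}^{(l,\mu)}\fS$, the tableau $x^{-1} \cdot \ft^{(l,\mu)}$ is row-standard, and its first row gives $x^{-1}(1) < \cdots < x^{-1}(l)$. Take $a \in [1,l] \cap \{x(1),\dots,x(m)\}$ and write $a = x(b)$ with $b \in [1,m]$, i.e. $x^{-1}(a) = b$. By monotonicity of $j \mapsto x^{-1}(j)$ on $[1,l]$, the $a$ distinct integers $x^{-1}(1),\dots,x^{-1}(a)$ are all $\le x^{-1}(a) = b$, so they lie in $[1,b]$, forcing $a \le b$; symmetrically, monotonicity of $j \mapsto x(j)$ on $[1,m]$ puts the $b$ distinct integers $x(1),\dots,x(b)$ all in $[1,a]$, forcing $b \le a$. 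Hence $a = b$, and then $x(1) < \cdots < x(a) = a$ is a strictly increasing chain of $a$ positive integers ending in $a$, so $x(j) = j$ for $1 \le j \le a$. By fact (b), $a \le c$; together with $a \le l$ and $a \le m$ this gives $a \le k$, i.e. $a \in [1,k]$, completing the proof.

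This lemma is essentially bookkeeping, so I do not expect a genuine obstacle; the only step that needs care is the ``sandwich'' argument, where the two one-sided monotonicity conditions — one coming from $x \in {}^{(l,\mu)}\fS$, the other from $x \in \fS^{(m,\nu)}$ — must be played against each other to force $a = b$ and then to pin $x$ down to the identity on $[1,a]$; in particular one should note that both halves of the hypothesis $x \in {}^{(l,\mu)}\fS^{(m,\nu)}$ are genuinely used. I would also check that the degenerate cases $l=0$, $m=0$, or $x=1$ (whence $c=n$) are subsumed by the conventions in the definition of $c$.
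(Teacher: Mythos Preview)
Your proof is correct. Both your argument and the paper's rely on the row-standard characterisation \eqref{S-row-std} of one-sided distinguished representatives, but the logical structures differ. The paper argues by case analysis: the case $c \ge \min\{l,m\}$ is declared clear, and in the remaining case $k = c < \min\{l,m\}$ it focuses on the single integer $x(c+1)$, showing by contradiction (against row-standardness of $x^{-1} \cdot \ft^{(l,\mu)}$) that $x(c+1) > l$, from which both conclusions follow at once. Your argument instead proves the two inclusions of the set identity directly and uniformly, the key step being the symmetric ``sandwich'' that plays monotonicity of $x$ on $[1,m]$ against monotonicity of $x^{-1}$ on $[1,l]$ to force $a = b$ and hence $x(j) = j$ on $[1,a]$. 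Your route avoids case distinctions and makes the role of both halves of the hypothesis $x \in {}^{(l,\mu)}\fS^{(m,\nu)}$ more transparent; the paper's route is shorter once the casework is accepted.
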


\begin{proof} 
If $c \geq \min \{l,m\}$, it is clear. 
Suppose $c < \min\{l,m\}$ (note that $k=c$ in this case), 
we have 
\begin{align}
\label{S-xc}
x(c)=c < c+1 < x (c+1) < x(c+2) < \dots < x (m) 
\end{align}
and there exists $b > m$ such that $x(b) =c+1$ 
since $x \cdot \ft^{(m,\nu)}$ is row-standard by \eqref{S-row-std}. 

If $x (c+1)> l $, 
we have $x \in \fS_{[c+1,n]}$ and $[1, l] \cap \{x(1),  \dots, x(m)\}= [1, c]$ 
by \eqref{S-xc}.  

If $x (c+1) \leq l$, we have $c+1 < x (c+1) \leq l$. 
Then we see that both $c+1$ and $x (c+1)$ appear in the first row of $\ft^{(l,\mu)}$. 
On the other hand, we have 
\begin{align*}
x^{-1}(c+1) =b >m \geq c+1 = x^{-1} (x (c+1)).
\end{align*} 
This contradicts that $x^{-1} \cdot \ft^{(l,\mu)}$ is row-standard. 
Thus this case does not occur.   
\end{proof}



\section{The complex reflection group of type $G(r,1,n)$} \label{cpxref}
In this section, 
we study on the complex reflection group $W_{n,r}$ of type $G(r,1,n)$. 
For  standard parabolic subgroups $W_{(l,\mu)}$ and $W_{(m,\nu)}$ of $W_{n,r}$, 
we shall find a complete set of representatives of the cosets $W/ W_{(l,\mu)}$ 
and the double cosets $W_{(l,\mu)} \backslash W_{n,r} / W_{(m,\nu)}$. 
These representatives will be used in the next section 
to obtain the Mackey formula for cyclotomic Hecke algebras.

\para \label{W Def}
The complex reflection group of type $G(r,1,n)$ is the semidirect product 
$W_{n,r} = \fS_n \ltimes (\ZZ / r \ZZ)^n$, 
where $\fS_n$ acts on $(\ZZ/ r \ZZ)^n$ via the permutation of factors.  
The group $W_{n,r}$ has a presentation 
such that $W_{n,r}$ is generated by  $s_0, s_1, \dots, s_{n-1}$ subject to the defining relations 
\begin{align*}
&s_0^r=1, \, 
s_i^2=1 \, (1\leq i \leq n-1), 
\\
&s_0 s_1 s_0 s_1 = s_1 s_0 s_1 s_0, \, 
s_i s_{i+1} s_i = s_{i+1} s_i s_{i+1} (1\leq i \leq n-2), \, 
s_i s_j = s_j s_i \, (|i-j|>1).
\end{align*}
The relations in the second row are called the braid relations.
Put 
\[
 t_i=s_{i-1} s_{i-2} \dots s_1 s_0 s_1 \dots s_{i-2} s_{i-1}
\] 
for $i=1$, $2$, $\dots$, $n$. 
Then $S=\{s_1,s_2, \dots, s_{n-1}\}$ generates $\fS_n$, 
and $t_i$ generates $\ZZ / r\ZZ$, the $i$-th factor of $(\ZZ/ r \ZZ)^n$. 
Then we have 
\begin{align*}
W_{n,r} = \{ x t_1^{a_1} t_2^{a_2} \dots t_n^{a_n} \mid x \in \fS_n, \, 
	a_1, a_2, \dots, a_n \in [0,r-1] \}. 
\end{align*}

From the definitions,  we have 
\begin{align}
\label{W-rel}
\begin{split} 
& t_i t_j = t_j t_i \quad (1 \leq i,j \leq n), 
\\
&x t_i x^{-1} = t_{x(i)} \quad (x \in \fS_n, \, 1 \leq i \leq n). 
\end{split}
\end{align}   


\para \label{W Def par}
For $0 \leq l \leq n$ and $\mu \vDash n-l$, 
let $W_{(l,\mu)}$ be the subgroup of $W_{n,r}$ generated by 
\begin{align*}
X_{(l,\mu)} = \{s_0,s_1,\dots, s_{n-1}\} \setminus \{ s_j \mid j=l+\sum_{i=0}^k \mu_i \text{ for some } k \geq 0\}, 
\end{align*}
where we put $\mu_0=0$. 
It is well-known that any parabolic subgroup of $W_{n,r}$ is 
conjugate to $W_{(l,\mu)}$ for some $0 \leq l \leq n$ and $\mu \vDash n-l$. 

Put 
\begin{align*}
S_{(l,\mu)} = X_{(l, \mu)} \cap S, 
\quad 
S_{(l)} =\{s_1, \dots, s_{l-1}\}, 
\quad 
S_{\mu}^{[l]} = S_{(l,\mu)} \setminus S_{(l)},
\end{align*}
where we put $S_{(l)} = \emptyset$ if $l\leq 1$. 
We easily see that 
\begin{itemize} 
\item 
the subgroup generated by $\{s_0,s_1,\dots, s_{l-1}\}$ is $W_{l,r}$, 
where we put $W_{l,r}=1$ if $l=0$, 
\item 
the subgroup generated by $S_{(l,\mu)}$ (resp. $S_{(l)}$) 
is the parabolic subgroup $\fS_{(l,\mu)}$ (resp. $\fS_{(l)}$)   of $\fS_n \subset W_{n,r}$ 
associated with $(l,\mu)$ (resp. $(l)$), 
\item 
the subgroup generated by $S_{\mu}^{[l]}$ is the parabolic subgroup $\fS_{\mu}^{[l]}$ of 
$\fS_{[l+1,n]}$ associated with $\mu$.  
\end{itemize}
Note that $\fS_\mu^{[l]}$ is contained in the centralizer of $W_{l,r}$,
we have 
\begin{align*} 
W_{(l,\mu)} = W_{l,r} \times \fS_{\mu}^{[l]} \cong (\fS_l \ltimes (\ZZ/ r \ZZ)^l) \times \fS_{\mu}, 
\end{align*} 
and   
\begin{align*}
W_{(l,\mu)} = \{x t_1^{a_1} t_2^{a_2} \dots t_l^{a_l} \mid x \in \fS_{(l, \mu)}, \, 
	 a_1, a_2,\dots, a_l \in [0,r-1] \}. 
\end{align*}
Put 
\begin{align*}
& W^{(l,\mu)} 
 = \{ x t_{l+1}^{a_{l+1}} t_{l+2}^{a_{l+2}} \dots t_n^{a_n} 
 	\mid x \in \fS^{(l,\mu)} , \,  a_{l+1}, a_{l+2}, \dots, a_n \in [0,r-1] \}, 
\\
& ^{(l,\mu)} W 
	=\{t_n^{a_n} \dots t_{l+2}^{a_{l+2}} t_{l+1}^{a_{l+1}} x 
	\mid x \in \, ^{(l,\mu)} \fS, \, a_{l+1}, a_{l+2}, \dots, a_n \in [0,r-1] \}. 
\end{align*}

 
\begin{lem} 
\label{W Lemma coset rep} 
The set $W^{(l,\mu)}$ $($resp. $^{(l,\mu)} W$ $)$ is a complete set of representatives 
of the coset  
$W_{n,r} / W_{(l,\mu)}$ $($resp. $W_{(l,\mu)} \backslash W_{n,r})$. 
\end{lem}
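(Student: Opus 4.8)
The plan is to show that every element $w \in W_{n,r}$ can be written uniquely as $w = v \cdot g$ with $v \in W^{(l,\mu)}$ and $g \in W_{(l,\mu)}$; the statement for $^{(l,\mu)}W$ then follows by applying the anti-automorphism $w \mapsto w^{-1}$ (which sends $W^{(l,\mu)}$ to $^{(l,\mu)}W$, using \eqref{W-rel} to move the $t_i$-powers to the left). First I would write $w = x\, t_1^{b_1} \cdots t_n^{b_n}$ with $x \in \fS_n$ and $b_i \in [0,r-1]$, using the normal form from \S\ref{W Def}. Using the Coxeter-group coset decomposition $\fS_n = \fS^{(l,\mu)} \cdot \fS_{(l,\mu)}$, write $x = x_1 x_2$ with $x_1 \in \fS^{(l,\mu)}$ and $x_2 \in \fS_{(l,\mu)}$.

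The key computational step is to commute the parabolic part $x_2$ past the torus factors using the relation $x_2 t_i x_2^{-1} = t_{x_2(i)}$ from \eqref{W-rel}. Since $x_2 \in \fS_{(l,\mu)}$ permutes $[1,l]$ among itself and permutes $[l+1,n]$ among itself (it stabilizes the block structure of $(l,\mu)$), conjugation by $x_2^{-1}$ sends $\{t_{l+1},\dots,t_n\}$ to itself and $\{t_1,\dots,t_l\}$ to itself. So I would split $t_1^{b_1}\cdots t_n^{b_n} = (t_1^{b_1}\cdots t_l^{b_l})(t_{l+1}^{b_{l+1}}\cdots t_n^{b_n})$, commute $x_2$ to the left past the second factor picking up a reindexed product $t_{l+1}^{c_{l+1}}\cdots t_n^{c_n}$ (still with exponents in $[0,r-1]$, since $t_i$ has order $r$), and then regroup: $w = x_1 \cdot (t_{l+1}^{c_{l+1}}\cdots t_n^{c_n}) \cdot x_2 \cdot (t_1^{b_1}\cdots t_l^{b_l})$. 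The middle-left part $x_1 \cdot t_{l+1}^{c_{l+1}}\cdots t_n^{c_n}$ lies in $W^{(l,\mu)}$, and the right part $x_2 \cdot t_1^{b_1}\cdots t_l^{b_l}$ lies in $W_{(l,\mu)}$ by the explicit description of $W_{(l,\mu)}$ in \S\ref{W Def par}. This proves $W_{n,r} = W^{(l,\mu)} \cdot W_{(l,\mu)}$.

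For the claim that $W^{(l,\mu)}$ is a set of representatives (each coset hit exactly once), I would check that $|W^{(l,\mu)}| = |W_{n,r}|/|W_{(l,\mu)}|$: indeed $|W^{(l,\mu)}| = |\fS^{(l,\mu)}| \cdot r^{n-l} = (|\fS_n|/|\fS_{(l,\mu)}|)\cdot r^{n-l}$, while $|W_{n,r}|/|W_{(l,\mu)}| = (|\fS_n|\, r^n)/(|\fS_{(l,\mu)}|\, r^l)$, and these agree. Combined with the surjectivity $W_{n,r} = W^{(l,\mu)} \cdot W_{(l,\mu)}$, a counting argument forces each coset to have a unique representative in $W^{(l,\mu)}$. (Alternatively, one argues uniqueness directly: if $v g = v' g'$ with $v,v' \in W^{(l,\mu)}$, project to $\fS_n$ via the quotient $W_{n,r} \to \fS_n$ to get $x_1 x_2 = x_1' x_2'$ in the Coxeter decomposition, forcing $x_1 = x_1'$ and $x_2 = x_2'$; then the torus parts must match as well.)

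\textbf{Main obstacle.} The only real subtlety is the bookkeeping in the commutation step: one must verify carefully that conjugating the torus factors $t_{l+1},\dots,t_n$ by an element $x_2 \in \fS_{(l,\mu)}$ keeps them within the \emph{last} $n-l$ factors (so the regrouped element genuinely lies in $W^{(l,\mu)}$), which relies precisely on $\fS_{(l,\mu)}$ preserving the subset $\{l+1,\dots,n\}$ — and likewise that it preserves $\{1,\dots,l\}$ for the $W_{(l,\mu)}$ part. This is immediate from $S_{(l,\mu)}$ containing no transposition straddling position $l$, i.e.\ from $s_l \notin X_{(l,\mu)}$. Everything else is a routine consequence of the normal form and the standard Coxeter-group facts recalled in \S\ref{symm}.
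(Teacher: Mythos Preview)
Your proposal is correct and follows essentially the same approach as the paper: write $w = x\,t_1^{b_1}\cdots t_n^{b_n}$, factor $x = x_1 x_2$ along $\fS^{(l,\mu)}\cdot\fS_{(l,\mu)}$, push the $[l+1,n]$-torus factors left past $x_2$ using \eqref{W-rel}, and finish by the index count $|W^{(l,\mu)}| = [W_{n,r}:W_{(l,\mu)}]$. The paper additionally decomposes $x_2 = y_1 y_2$ with $y_1 \in \fS_{(l)}$ and $y_2 \in \fS_\mu^{[l]}$ to make the block-preservation explicit, whereas you invoke it directly; and the paper handles $^{(l,\mu)}W$ by a parallel argument rather than by your inversion trick $(W^{(l,\mu)})^{-1} = {}^{(l,\mu)}W$ --- both are cosmetic differences.
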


\begin{proof} 
We prove only the claim for $W^{(l,\mu)}$ since the claim for $^{(l,\mu)}W$ is proven in a similar way. 

For $w = x t_1^{a_1} t_2^{a_2} \dots t_n^{a_n} \in W_{n,r}$ ($x \in \fS_n$, $ a_1,\dots, a_n \in [0,r-1] $),  
we can write 
\begin{align*} 
& x = x_1 x_2 \quad (x_1 \in \fS^{(l,\mu)}, \, x_2 \in \fS_{(l,\mu)}) 
\text{ and } 
x_2 = y_1 y_2  \quad (y_1 \in \fS_{(l)}, \, y_2 \in \fS_{\mu}^{[l]}). 
\end{align*} 
Note that 
$y_1 \in \fS_{[1,l]}$ and $ y_2 \in \fS_{[l+1,n]}$, 
the relations \eqref{W-rel} imply that 
\begin{align*}
w 
&= x t_1^{a_1} t_2^{a_2} \dots t_n^{a_n}
\\
&= x_1 y_1 y_2 t_1^{a_1} t_2^{a_2} \dots t_n^{a_n}
\\
&=x_1 (t_{y_1(1)}^{a_1} \dots t_{y_1(l)}^{a_l}) (t_{y_2(l+1)}^{a_{l+1}} \dots t_{y_2(n)}^{a_n}) y_1 y_2 
\\
&= x_1 (t_{y_2(l+1)}^{a_{l+1}} \dots t_{y_2(n)}^{a_n}) (t_{y_1(1)}^{a_1} \dots t_{y_1(l)}^{a_l}) y_1 y_2  
\\
&= x_1 (t_{y_2(l+1)}^{a_{l+1}} \dots t_{y_2(n)}^{a_n}) y_1 y_2 (t_1^{a_1} \dots t_l^{a_l}) 
\\
&= x_1 (t_{y_2(l+1)}^{a_{l+1}} \dots t_{y_2(n)}^{a_n}) x_2 (t_1^{a_1} \dots t_l^{a_l}), 
\end{align*}
and we see that 
$x_1(t_{y_2(l+1)}^{a_{l+1}} \dots t_{y_2(n)}^{a_n}) \in W^{(l,\mu)}$ 
and 
$ x_2 (t_1^{a_1} \dots t_l^{a_l}) \in W_{(l,\mu)}$. 
Thus, we have 
\begin{align}
\label{W-coset decom}
W_{n,r} = \bigcup_{ u \in W^{(l,\mu)}} u W_{(l,\mu)}.
\end{align}
On the other hand, 
we see that 
$|W_{n,r}|=|\fS_n| r^n$, $|W_{(l,\mu)}| = |\fS_{(l,\mu)}| r^l$ and $|W^{(l,\mu)}| = |\fS^{(l,\mu)}| r^{n-l}$, 
and we have  
\begin{align}
\label{W-index}
[W_{n,r} : W_{(l,\mu)}] 
= |W_{n,r}| / |W_{(l,\mu)}| 
= (|\fS_n| / |\fS_{(l,\mu)}|) r^{n-l} 
= |\fS^{(l,\mu)}| r^{n-l} 
=|W^{(l,\mu)}|. 
\end{align}
Thus \eqref{W-coset decom} and \eqref{W-index} imply the claim for $W^{(l,\mu)}$. 
\end{proof} 


\remark 
In the case where $r=2$, 
the group $W_{n,2}$ is the Weyl group of type $B_n$. 
In this case, 
$W^{(l,\mu)}$ (resp. $^{(l,\mu)} W$) is not the set of distinguished coset representatives in general. 
For an example, 
take $l=0$ and $\mu \vDash n$ such that $\mu_1 >2$. 
Then 
$W_{(0,\mu)}$ is generated by $S_{\mu}$. 
In this case, $s_1 \in S_{\mu}$, 
and $t_2 \in W^{(l,\mu)}$. 
However, 
we have $\ell (t_2) =\ell (s_1 s_0 s_1) =3$ and $\ell (t_2 s_1) = \ell (s_1 s_0)=2$. 
Thus, $t_2$ is not a distinguished coset representative. 


\para \label{W Def Ix}
For $x \in \, ^ {(l,\mu)} \fS^{(m,\nu)}$ ($0 \leq l,m \leq n$, $\mu \vDash n-l$, $\nu \vDash n-m$), 
put 
\begin{align*}
I (x) = [m+1,  n] \cap \{x^{-1}(l+1), x^{-1}(l+2),\dots, x^{-1}(n)\}.
\end{align*}

For $x t_{m+1}^{a_{m+1}} \dots t_{n}^{a_n} \in W^{(m,\nu)}$ ($x \in \fS^{(m,\nu)}$), 
we have 
$x t_{m+1}^{a_{m+1}} \dots t_{n}^{a_n} 
=  t_{x(m+1)}^{a_{m+1}} \dots t_{x(n)}^{a_n} x$
by \eqref{W-rel}. 
Thus we see that 
$x t_{m+1}^{a_{m+1}} \dots t_{n}^{a_n} \in \, ^{(l,\mu)} W \cap W^{(m,\nu)}$
if and only if
$x \in \,^{(l,\mu)} \fS^{(m,\nu)} \text{ and } x(k) \in [l+1,n]$ with $a_k \ne 0$.
This implies that 
\begin{align*}
^{(l,\mu)} W \cap W^{(m,\nu)} 
=\{ x \prod_{i \in I (x)} t_i^{a_i} \mid x \in \,^{(l,\mu)} \fS^{(m,\nu)}, \,  a_i \in [0,r-1]\}.
\end{align*}

For $x \in \,^{(l,\mu)} \fS^{(m,\nu)}$, 
recall that  $\tau(x)$ is the composition such that 
\begin{align}
\label{W Def taux}
S_{\tau(x)} = S_{(l,\mu)} \cap x S_{(m,\nu)} x^{-1}, 
\end{align}
and we have 
$\fS_{\tau(x)} = \fS_{(l,\mu)} \cap x \fS_{(m,\nu)} x^{-1}$. 

For $z = x y x^{-1} \in \fS_{\tau(x)}$ ($z \in \fS_{(l,\mu)}, y \in \fS_{(m,\nu)}$),
we see that 
$y(i) \in [m+1, n]$ if $i \in [m+1,  n]$
since $y \in \fS_{(m,\nu)}$.  
We also see that 
\begin{align*}
y (i) \in \{x^{-1} (l+1),  \dots, x^{-1}(n) \} 
\text{ if }
i \in \{x^{-1} (l+1), \dots, x^{-1}(n) \} 
\end{align*}
since 
$y x^{-1}(l+ j) = x^{-1} z (l+j)$ and $z \in \fS_{(l,\mu)}$. 
These imply that 
\begin{align}
\label{W-y I}
y (i) \in I(x) \text{ if } i \in I (x)
\end{align}
for $z = x y x^{-1} \in \fS_{\tau(x)}$. 
For $x \in \,^{(l,\mu)} \fS^{(m,\nu)}$, 
put 
\begin{align*}
^{(l,\mu)} W \cap W^{(m,\nu)} (x) 
= \{ x \prod_{i \in I(x)} t_i^{a_i} \mid  a_i \in [0,r-1] \}. 
\end{align*}
We have 
\begin{align*}
^{(l,\mu)} W \cap W^{(m,\mu)} = \bigcup_{x \in \, ^{(l,\mu)} \fS^{(m,\nu)}} \, ^{(l,\mu)} W \cap W^{(m,\mu)} (x).
\end{align*} 
Thanks to \eqref{W-y I}, 
we can define an action of $\fS_{\tau(x)}$ on $^{(l,\mu)} W \cap W^{(m,\nu)} (x)$ by  
\begin{align}
\label{W act St I}
z \odot (x \prod_{i \in I (x)} t_i^{a_i}) 
=x \prod_{i \in I(x)} t_{y(i)}^{a_i} 
\end{align}
for $ z=x y x^{-1} \in \fS_{\tau(x)}$. 
We remark that, for $ z=x y x^{-1} \in \fS_{\tau(x)}$, 
we have 
\begin{align}
\label{W z act}
z \odot (x \prod_{i \in I (x)} t_i^{a_i}) 
=x \prod_{i \in I (x)} t_{y(i)}^{a_i}
= z (x \prod_{i \in I (x)} t_i^{a_i}) y^{-1}, 
\end{align}
where 
$z \in \fS_{(l,\mu)}$ and $y \in \fS_{(m,\nu)}$. 
Thus, 
for $z \in \fS_{\tau(x)}$ and $u \in ^{(l,\mu)} W \cap W^{(m,\nu)} (x)$, 
we see that 
$u$ and $z \odot u$ belong to the same $(W_{(l,\mu)}, W_{(m,\nu)})$-double coset.   

For $u  \in \,^{(l,\mu)} W \cap W^{(m,\mu)}(x)$, 
let 
$O(u) = \{ z \odot u \mid z \in \fS_{\tau(x)}\}$
be the $\fS_{\tau(x)}$-orbit under the action \eqref{W act St I}.

For $u= x \prod_{i \in I (x)} t_i^{a_i} \in \,^{(l,\mu)} W \cap W^{(m,\mu)}$,  
put 
$\ba (u) = (a_1, a_2,\dots, a_n) \in [0,r-1]^n$, 
where we put $a_i =0$ if $i \not\in I (x)$. 
Let $\succeq$ be the lexicographic order on $\ZZ^n$. 
We define a partial order $\succeq$ on $^{(l,\mu)} W \cap W^{(m,\nu)}$ by
\begin{align}
\label{W Def order succ}
u \succeq u' 
\LRa 
x= x'  \text{ and } \ba(u) \succeq \ba(u') 
\end{align}
for $u = x \prod_{i \in I(x)}t_i^{a_i}, u' = x' \prod_{i \in I(x')} t_i^{a'_i} \in \, ^{(l,\mu)} W \cap W^{(m,\nu)}$. 
Put 
\begin{align*}
^{(l,\mu)} W ^{(m, \nu)} 
= \{ u \in \, ^{(l,\mu)} W \cap W^{(m,\nu)} \mid u \text{ is minimal in } O(u) \}. 
\end{align*}
From the definition, 
any element of $^{(l,\mu)} W \cap W^{(m, \nu)}$ 
is obtained from $^{(l,\mu)} W^{(m.\nu)}$ 
by the action of $\fS_{\tau(x)}$ for $x \in {}^{(l,\mu)} \fS^{(m,\nu)}$.

\begin{lem} 
\label{W Lemma nu 1n-m}
If $\nu=(1^{n-m})$, 
we have that 
$^{(l,\mu)} W^{(m,\nu)} = \,^{(l,\mu)} W \cap W^{(m,\nu)}$. 
\end{lem}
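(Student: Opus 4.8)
The plan is to show that when $\nu = (1^{n-m})$, the $\fS_{\tau(x)}$-action $\odot$ on each fiber $^{(l,\mu)} W \cap W^{(m,\nu)}(x)$ is trivial, so that every orbit $O(u)$ is a singleton and hence every element is automatically minimal. First I would recall from \eqref{W Def taux} that $S_{\tau(x)} = S_{(l,\mu)} \cap x S_{(m,\nu)} x^{-1}$, and since $\nu = (1^{n-m})$ we have $S_{(m,\nu)} = S_{(m)} \cup S_\nu^{[m]}$ with $S_\nu^{[m]} = \emptyset$; thus $S_{(m,\nu)} = \{s_1, \dots, s_{m-1}\} = S_{(m)}$ and $\fS_{(m,\nu)} = \fS_{[1,m]}$. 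Consequently $\fS_{\tau(x)} = \fS_{(l,\mu)} \cap x \fS_{[1,m]} x^{-1}$, and for any $z = xyx^{-1} \in \fS_{\tau(x)}$ the element $y$ lies in $\fS_{[1,m]}$, i.e.\ $y$ fixes $\{m+1, m+2, \dots, n\}$ pointwise.

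Next I would plug this into the definition \eqref{W act St I} of the action: for $z = xyx^{-1} \in \fS_{\tau(x)}$ and $u = x \prod_{i \in I(x)} t_i^{a_i}$ with $I(x) \subseteq [m+1,n]$, we have
\begin{align*}
z \odot \Bigl( x \prod_{i \in I(x)} t_i^{a_i} \Bigr) = x \prod_{i \in I(x)} t_{y(i)}^{a_i} = x \prod_{i \in I(x)} t_i^{a_i} = u,
\end{align*}
because $y(i) = i$ for every $i \in I(x) \subseteq [m+1,n]$ as $y \in \fS_{[1,m]}$. Hence $O(u) = \{u\}$ is a singleton for every $u \in {}^{(l,\mu)} W \cap W^{(m,\nu)}(x)$ and every $x \in {}^{(l,\mu)} \fS^{(m,\nu)}$.

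Finally I would conclude: since $^{(l,\mu)} W^{(m,\nu)}$ is defined as the set of $u$ that are minimal in $O(u)$ under the order $\succeq$ of \eqref{W Def order succ}, and each $O(u)$ is a singleton, every $u \in {}^{(l,\mu)} W \cap W^{(m,\nu)}$ is trivially minimal in its own orbit; therefore $^{(l,\mu)} W^{(m,\nu)} = {}^{(l,\mu)} W \cap W^{(m,\nu)}$, as claimed. I do not anticipate a genuine obstacle here — the only point requiring care is the bookkeeping identifying $S_{(m,\nu)}$ with $S_{(m)}$ when $\nu = (1^{n-m})$ (so that $\fS_{(m,\nu)}$ permutes only $\{1, \dots, m\}$), which makes the conjugated symmetric-group factor $y$ act trivially on the indices $I(x)$ occurring in the torus part. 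Everything else is a direct unwinding of the definitions from \ref{W Def Ix}.
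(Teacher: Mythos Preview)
Your proof is correct and is essentially identical to the paper's own argument: both observe that when $\nu=(1^{n-m})$ the element $y \in \fS_{(m,\nu)}$ fixes $[m+1,n]$ pointwise, so the $\odot$-action is trivial and every orbit $O(u)$ is a singleton. You merely spell out the identification $\fS_{(m,\nu)} = \fS_{[1,m]}$ a bit more explicitly than the paper does.
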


\begin{proof} 
For any $z= x y x^{-1} \in \fS_{\tau(x)}$ ($ x \in \,^{(l,\mu)} \fS^{(m,\nu)}$), 
we have 
\begin{align*}
z \odot (x \prod_{i \in I(x)} t_i^{a_i}) = x \prod_{i \in I(x)} t_{y(i)}^{a_i} 
= x \prod_{i \in I(x)} t_i^{a_i} 
\end{align*}
since $y \in \fS_{(m,\nu)}$ and $I(x) \subset [m+1,n]$ 
together with $\nu=(1^{n-m})$. 
This implies that 
$O(u)=\{u\}$ for any $u \in \,^{(l,\mu)} W \cap W^{(m,\nu)}$, 
and we have the lemma. 
\end{proof}


For the set $^{(l,\mu)} W ^{(m,\nu)}$, we have the following proposition: 

\begin{prop}
The set 
$^{(l,\mu)} W^{(m,\nu)}$ 
is a complete set of representatives of the double cosets $W_{(l,\mu)} \backslash W_{n,r} / W_{(m,\nu)}$.
\end{prop}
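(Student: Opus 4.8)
The plan is to show that the set $^{(l,\mu)}W^{(m,\nu)}$ both exhausts all double cosets and contains exactly one representative from each. The first half follows almost immediately from what is already set up: by Lemma~\ref{W Lemma coset rep}, every element of $W_{n,r}$ lies in some coset $uW_{(m,\nu)}$ with $u \in W^{(m,\nu)}$; combining the coset representative description for $W_{(l,\mu)}\backslash W_{n,r}$ with the analysis in \ref{W Def Ix} (where it is observed that $^{(l,\mu)}W \cap W^{(m,\nu)}$ parametrizes $W_{(l,\mu)}\backslash W_{n,r}/W_{(m,\nu)}$ coset by coset, once we also mod out by $\odot$), one sees that every $(W_{(l,\mu)},W_{(m,\nu)})$-double coset meets $^{(l,\mu)}W \cap W^{(m,\nu)}$; and by \eqref{W z act} the whole $\fS_{\tau(x)}$-orbit $O(u)$ lies in a single double coset, so in particular its minimal element does. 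Since every orbit has a minimal element, $^{(l,\mu)}W^{(m,\nu)}$ hits every double coset. More precisely I would first argue that $W_{(l,\mu)}\,u\,W_{(m,\nu)} = W_{(l,\mu)}\,u'\,W_{(m,\nu)}$ with $u,u' \in {}^{(l,\mu)}W\cap W^{(m,\nu)}$ forces $u' \in O(u)$, which gives surjectivity and injectivity simultaneously.

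So the real content is the claim: if $u = x\prod_{i\in I(x)}t_i^{a_i}$ and $u' = x'\prod_{i\in I(x')}t_i^{a'_i}$ lie in $^{(l,\mu)}W\cap W^{(m,\nu)}$ and in the same double coset, then $x = x'$ and $u' \in O(u)$. Write $u' = p\,u\,q$ with $p \in W_{(l,\mu)}$, $q \in W_{(m,\nu)}$. Decompose $p = p_{\fS}\,\mathbf{t}_p$ and $q = q_{\fS}\,\mathbf{t}_q$ along the semidirect product, where $p_{\fS}\in\fS_{(l,\mu)}$, $q_{\fS}\in\fS_{(m,\nu)}$, and $\mathbf{t}_p \in (\ZZ/r\ZZ)^l$ (supported on $[1,l]$), $\mathbf{t}_q\in(\ZZ/r\ZZ)^m$ (supported on $[1,m]$). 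Using \eqref{W-rel} to push all the $t$-factors to one side, the equation $u' = p u q$ becomes an equality in $W_{n,r}=\fS_n\ltimes(\ZZ/r\ZZ)^n$, so I read off two conditions: the $\fS_n$-parts give $x' = p_{\fS}\,x\,q_{\fS}$, and the $(\ZZ/r\ZZ)^n$-parts give a relation among the exponent tuples. From $x' = p_{\fS}xq_{\fS}$ with $x,x'\in{}^{(l,\mu)}\fS^{(m,\nu)}$ and the uniqueness in the double coset decomposition \eqref{S length doublecoset decom} for $\fS_n$, I get $x=x'$ and moreover $p_{\fS}\,x\,q_{\fS} = x$ forces $p_{\fS} x q_{\fS} x^{-1} = 1$, i.e. $p_{\fS} = x q_{\fS}^{-1} x^{-1}$, which lands $p_{\fS}$ in $\fS_{(l,\mu)}\cap x\fS_{(m,\nu)}x^{-1} = \fS_{\tau(x)}$. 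Then, chasing the $(\ZZ/r\ZZ)^n$-part and using that the nonzero exponents of $u$ and $u'$ are indexed by $I(x)$ together with property \eqref{W-y I} (that $\fS_{\tau(x)}$ permutes $I(x)$), I can identify $\mathbf{t}_p$ (the contribution on $[1,l]$ must vanish after pushing through, since $u,u'$ have no support there) and conclude $\ba(u') $ is obtained from $\ba(u)$ by the permutation $y := x^{-1}p_{\fS}x = q_{\fS}^{-1}$, which is exactly $u' = (x p_{\fS}' x^{-1})\odot u$ with $x p_{\fS}' x^{-1} \in \fS_{\tau(x)}$ — hence $u'\in O(u)$.

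The combination of the two directions then gives that distinct elements of $^{(l,\mu)}W^{(m,\nu)}$ lie in distinct double cosets (since two minimal elements of the same orbit coincide, and elements of the same double coset lie in the same orbit), while surjectivity was established above; this is the assertion of the proposition.

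The step I expect to be the main obstacle is the careful bookkeeping in the $(\ZZ/r\ZZ)^n$-part of the equation $u' = p u q$: one must verify that the exponent contributions coming from $\mathbf{t}_p$ and $\mathbf{t}_q$, which a priori are supported on $[1,l]$ and $[1,m]$ respectively, actually get transported by $x$ out of those ranges in a way that is incompatible with $u,u'$ having support only in $I(x)\subseteq[m+1,n]$ and $\{x(i):i\in I(x)\}\subseteq[l+1,n]$ unless $\mathbf{t}_p = \mathbf{t}_q = 1$ (equivalently, unless those exponents are absorbed harmlessly). Making this rigorous requires keeping precise track of which coordinates of $(\ZZ/r\ZZ)^n$ are in play and invoking the defining property of $I(x)$ from \ref{W Def Ix} together with \eqref{W-y I}; everything else is a formal consequence of Lemma~\ref{W Lemma coset rep}, the Coxeter-group double coset theory recalled in \S\ref{symm}, and the definition of the $\odot$-action.
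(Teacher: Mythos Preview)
Your injectivity argument --- showing that $u,u' \in {}^{(l,\mu)}W \cap W^{(m,\nu)}$ lying in the same double coset forces $u' \in O(u)$, hence $u=u'$ when both are minimal --- is correct and is exactly what the paper does. The bookkeeping you flag as the main obstacle (tracking which coordinates of $(\ZZ/r\ZZ)^n$ carry the contributions from $\mathbf{t}_p$, $\mathbf{t}_q$, and the exponents of $u$) is handled in the paper along the lines you describe: the support of $\mathbf{t}_p$ transports via $y^{-1}x^{-1}$ into the complement of $I(x)$ and hence must cancel against $\mathbf{t}_q$, while on $I(x)$ one reads off $a'_j = a_{q_{\fS}(j)}$, i.e.\ $u' = p_{\fS}\odot u$.

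Your surjectivity argument, however, has a genuine gap. You write that \S\ref{W Def Ix} ``observes that ${}^{(l,\mu)}W \cap W^{(m,\nu)}$ parametrizes $W_{(l,\mu)}\backslash W_{n,r}/W_{(m,\nu)}$ coset by coset, once we also mod out by $\odot$'', but that paragraph only \emph{defines} the set and the $\odot$-action and records (via \eqref{W z act}) that each orbit lies in a single double coset; it does not show that every double coset is reached. Nor does the implication ``same double coset $\Rightarrow$ same orbit'' yield surjectivity --- it gives only injectivity of the map from orbits to double cosets, so your claim that it ``gives surjectivity and injectivity simultaneously'' is not right. To get surjectivity you must actually show that an arbitrary $w = x\,t_1^{a_1}\cdots t_n^{a_n}$ can be rewritten as $w_1 u w_2$ with $u \in {}^{(l,\mu)}W \cap W^{(m,\nu)}$, and this is not automatic from the two one-sided coset descriptions: starting from some $u_0 \in W^{(m,\nu)}$ and left-multiplying by an element of $W_{(l,\mu)}$ to land in ${}^{(l,\mu)}W$ will in general knock you out of $W^{(m,\nu)}$. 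The paper carries this out by an explicit computation --- decomposing $x = x_1 x_2 x_3$ via the $\fS_n$ double-coset theory and then redistributing the factors $t_i^{a_i}$ according to whether $x_3(i)$ lands in $I(x_2)$ or its complement $I(x_2)^c$ --- and this computation occupies roughly half of the proof. It is the surjectivity, not the $(\ZZ/r\ZZ)^n$-bookkeeping for injectivity, that is the more substantial step here.
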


\begin{proof} 
For $w =x t_1^{a_1} t_2^{a_2}\dots t_n ^{a_n} \in W_{n,r}$ ($x \in \fS_n$, $a_1, \dots, a_n \in [0,r-1]$), 
we can write 
\begin{align*} 
x= x_1 x_2 x_3 \quad (x_1 \in \fS_{(l,\mu)}, \, x_2 \in \, ^{(l,\mu)} \fS^{(m,\nu)}, \, x_3 \in \fS_{(m,\nu)}). 
\end{align*}
The relations \eqref{W-rel} imply that 
\begin{align*}
w
= x_1x_2x_3 t_1^{a_1} t_2^{a_2} \dots t_n^{a_n}
=x_1 x_2 (t_{x_3(m+1)}^{a_{m+1}} t_{x_3(m+2)}^{a_{m+2}} \dots t_{x_3 (n)}^{a_n}) x_3 t_1^{a_1} t_2^{a_2} \dots t_m^{a_m}, 
\end{align*}
where we have $\{x_3(m+1), x_3(m+2),\dots, x_3 (n)\}=[m+1, n]$ 
since $x_3 \in \fS_{(m,\nu)}$. 
Put 
$I (x_2)^c = [m+1, n] \setminus I (x_2)$. 
Then we have 
\begin{align}
\label{W w decom}
\begin{split}
w 
&= x_1 x_2 (t_{x_3(m+1)}^{a_{m+1}} t_{x_3(m+2)}^{a_{m+2}} \dots t_{x_3 (n)}^{a_n}) x_3 t_1^{a_1} t_2^{a_2} \dots t_m^{a_m} 
\\
&= x_1x_2 (\prod_{x_3(i) \in I(x_2)^c} t_{x_3(i)}^{a_i}) ( \prod_{x_3(i) \in I (x_2)} t_{x_3(i)}^{a_i}) 
	x_3 t_1^{a_1} t_2^{a_2} \dots t_m^{a_m}
\\
&= (x_1 \prod_{x_3(i) \in I (x_2)^c} t_{x_2x_3(i)}^{a_i} ) 
	( x_2 \prod_{x_3(i) \in I (x_2)} t_{x_3(i)}^{a_i})
	(x_3 t_1^{a_1} t_2^{a_2} \dots t_m^{a_m}), 
\end{split}
\end{align}
where we have 
\begin{align} 
\label{W x2 x3 i}
\{ x_2 x_3 (i) \mid x_3(i) \in I (x_2)^c \} \subset [1, l]
\end{align} 
from the definition of $I (x_2)^c$. 

Take $z=x_2 y x_2^{-1} \in \fS_{\tau (x_2)}$ such that 
$z \odot (x_2 \prod t_{x_3(i)}^{a_i})$ is minimal in $O(x_2 \prod t_{ x_3(i)}^{a_i})$, 
then \eqref{W w decom} and \eqref{W z act} imply 
\begin{align}
\label{W double coset} 
\begin{split}
w 
&= (x_1 \prod_{x_3(i) \in I (x_2)^c} t_{x_2x_3(i)}^{a_i} ) 
	( x_2 \prod_{x_3(i) \in I (x_2)} t_{x_3(i)}^{a_i})
	(x_3 t_1^{a_1} t_2^{a_2} \dots t_m^{a_m}) 
\\
&=(x_1 \prod_{x_3(i) \in I (x_2)^c} t_{x_2x_3(i)}^{a_i} ) 
	z^{-1} ( z \odot ( x_2 \prod_{x_3(i) \in I (x_2)} t_{ x_3(i)}^{a_i}) ) y
	(x_3 t_1^{a_1} t_2^{a_2} \dots t_m^{a_m})
\\
&= (x_1 z^{-1} \prod_{x_3(i) \in I (x_2)^c} t_{z x_2x_3(i)}^{a_i} ) 
	( z \odot ( x_2 \prod_{x_3(i) \in I (x_2)} t_{ x_3(i)}^{a_i}) )
	(y x_3 t_1^{a_1} t_2^{a_2} \dots t_m^{a_m}), 
\end{split}
\end{align}
where we have 
$\{z x_2 x_3 (i) \mid x_3(i) \in I (x_2)^c\} \subset [1, l]$
by \eqref{W x2 x3 i} and $z \in \fS_{(l,\mu)}$. 
From the above argument, 
we see that 
\begin{align}
\label{W-element-decom}
\begin{split}
& z \odot ( x_2 \prod_{x_3(i) \in I (x_2)} t_{x_3(i)}^{a_i})  \in \,^{(l,\mu)} W^{(m,\nu)}, 
\\
&(x_1 z^{-1} \prod_{x_3(i) \in I (x_2)^c} t_{z x_2x_3(i)}^{a_i} ) \in W_{(l,\mu)} 
\text{ and }
(y x_3 t_1^{a_1} t_2^{a_2} \dots t_m^{a_m}) \in W_{(m,\nu)}. 
\end{split}
\end{align}
The equations \eqref{W double coset} 
and \eqref{W-element-decom} imply that 
\begin{align}
\label{W-double cosets decom}
W_{n,r} 
= \bigcup_{u \in \, ^{(\l,\mu)}W^{(m,\nu)}} W_{(l,\mu)} u W_{(m,\nu)}. 
\end{align}

Finally, we prove that distinct elements of $^{(l,\mu)} W^{(m,\nu)}$ belong to 
distinct $(W_{(l,\mu)}, W_{(m,\nu)})$-double cosets. 

For $u= x \prod_{i \in I(x)} t_i^{a_i} \in \, ^{(l,\mu)} W^{(m,\nu)}$ 
and $u'=x' \prod_{i \in I (x')} t_i^{a'_i} \in \, ^{(l,\mu)} W^{(m,\nu)}$,  
suppose that $u$ and $u'$ belong to the same $(W_{(l,\mu)}, W_{(m,\nu)})$-double coset, 
namely 
$u' = w_1 u w_2$ for some 
$w_1=z \prod_{i=1}^l t_i^{b_i} \in W_{(l,\mu)}$ ($z \in \fS_{(l,\mu)}$) and 
$w_2 = y \prod_{i=1}^m t_i^{c_i} \in W_{(m,\mu)}$ ($y \in \fS_{(m,\nu)}$). 
Then we see that 
\[
x' \prod_{i \in I (x')} t_i^{a'_i} 
= ( z \prod_{i=1}^l t_i^{b_i}) ( x \prod_{i \in I(x)} t_i^{a_i}) ( y \prod_{i=1}^m t_i^{c_i} )
= z x y \prod_{i=1}^l t_{y^{-1} x^{-1} (i)}^{b_i} \prod_{i \in I(x)} t_{y^{-1}(i)}^{a_i} \prod_{i=1}^m t_i^{c_i}. 
\]
This implies that 
\begin{align}
\label{W x'=zxy}
x'= z x y \text{ and } 
\prod_{i \in I (x')} t_i^{a'_i}  
= \prod_{i=1}^l t_{y^{-1} x^{-1} (i)}^{b_i} \prod_{i \in I(x)} t_{y^{-1}(i)}^{a_i} \prod_{i=1}^m t_i^{c_i}.
\end{align}
Note that $z \in \fS_{(l,\mu)}$ and $y \in \fS_{(m,\nu)}$, 
and we see that 
$x$ and $x'$ belong to a same $(\fS_{(l,\mu)}, \fS_{(m,\nu)})$-double coset. 
Then we have $x=x'$ 
since $x, x' \in \, ^{(l,\mu)} \fS^{(m,\mu)}$. 
We also have 
$y^{-1} x^{-1} (i) = x^{-1} z (i) \not \in I(x)$ for $i \in [1,l]$
by $z \in \fS_{(l,\mu)}$ and the definition of $I(x)$. 
Thus \eqref{W x'=zxy} implies that 
\begin{align*}
x' = x = z x y, 
\quad  
\prod_{i \in I(x)} t_i^{a'_i} = \prod_{i \in I(x)} t_{y^{-1}(i)}^{a_i} 
\text{ and } 
\prod_{i=1}^l t_{y^{-1} x^{-1}(i)}^{b_i} \prod_{i=1}^m t_i^{c_i} =1, 
\end{align*}
and we have 
\begin{align*}
u' = x \prod_{i \in I(x)} t_i^{a'_i} = x \prod_{i \in I(x)} t_{y^{-1}(i)}^{a_i} 
= z \odot ( x \prod_{i \in I(x)} t_i^{a_i}) = z \odot u 
\end{align*}
since $z=x y^{-1} x^{-1} \in  \fS_{\tau(x)} = \fS_{(l,\mu)} \cap x \fS_{(m,\nu)} x^{-1} $. 
Thus we have 
$u' \in O(u)$. 
On the other hand, 
both of $u$ and $u'$ are  a minimal element in $O(u)$ since $u, u' \in \, ^{(l,\mu)} W^{(m,\nu)}$, 
and we have $u=u'$ since a minimal element in $O(u)$ is unique by the definition. 
\end{proof}

\begin{lem} 
\label{W Lemma ai}
For $u = x \prod_{i =1}^n t_i^{a_i} \in \, ^{(l,\mu)} W^{(m,\nu)}$ $(a_i =0$ if $i \not\in I(x))$ 
and $ y \in \fS_{(m,\nu)}$,
we have the followings. 
\begin{enumerate} 
\item 
$u y u^{-1} = x y x^{-1} \prod_{i=1}^n t_{x(i)}^{a_{y(i)}-a_i}$. 

\item 
$u t_j u^{-1} = t_{x(j)}$ for $j=1,2,\dots,n$. 

\item 
$a_{y(i)} = a_i =0$ if $i \in [1,m]$. 

\item 
$a_i =0$ if $x(i) \leq l$. 

\item 
$a_{y(i)} =0$ if $x(i) \leq l$ and $x y x^{-1} \in \fS_{(l,\mu)}$. 
\end{enumerate}
\end{lem}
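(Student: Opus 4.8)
The plan is to prove the five assertions in Lemma \ref{W Lemma ai} by direct computation using the commutation relations \eqref{W-rel} together with the structure of the double coset representatives established earlier. First I would record the basic identity: for $u = x \prod_{i=1}^n t_i^{a_i}$ with $x \in \fS_n$ and $y \in \fS_{(m,\nu)}$, we compute
\begin{align*}
u y u^{-1}
&= x \Bigl(\prod_{i=1}^n t_i^{a_i}\Bigr) y \Bigl(\prod_{i=1}^n t_i^{-a_i}\Bigr) x^{-1}
= x y \Bigl(\prod_{i=1}^n t_{y^{-1}(i)}^{a_i}\Bigr)\Bigl(\prod_{i=1}^n t_i^{-a_i}\Bigr) x^{-1}.
\end{align*}
Reindexing the first product by $i \mapsto y(i)$ gives $\prod_i t_i^{a_{y(i)}}$, so the bracketed part is $\prod_i t_i^{a_{y(i)} - a_i}$, and conjugating by $x$ turns each $t_i$ into $t_{x(i)}$ via \eqref{W-rel}; this yields (i). Part (ii) is the special case where we instead conjugate $t_j$ directly: $u t_j u^{-1} = x t_j x^{-1} = t_{x(j)}$, since the powers of the $t$'s all commute with $t_j$.

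For (iii), the point is that $u \in \,^{(l,\mu)} W^{(m,\nu)} \subset \,^{(l,\mu)} W \cap W^{(m,\nu)}$, and by the description of $^{(l,\mu)} W \cap W^{(m,\nu)}$ the exponents $a_i$ are supported on $I(x) \subseteq [m+1,n]$; hence $a_i = 0$ for $i \in [1,m]$. Since $y \in \fS_{(m,\nu)}$ stabilizes $[1,m]$ (indeed it stabilizes the first $m$ letters, fixing $[1,m]$ setwise), $y(i) \in [1,m]$ whenever $i \in [1,m]$, so $a_{y(i)} = 0$ as well. For (iv), recall that $I(x) = [m+1,n] \cap \{x^{-1}(l+1), \dots, x^{-1}(n)\}$, so $i \in I(x)$ forces $x(i) \in [l+1,n]$, i.e. $x(i) > l$; contrapositively, if $x(i) \le l$ then $i \notin I(x)$ and therefore $a_i = 0$.

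Part (v) is the one requiring a genuine argument, and it is where I expect the only real subtlety. Suppose $x(i) \le l$ and $z := x y x^{-1} \in \fS_{(l,\mu)}$; I want $a_{y(i)} = 0$, equivalently $y(i) \notin I(x)$ or, by (iv)-type reasoning, $x(y(i)) \le l$. Since $z \in \fS_{\tau(x)} = \fS_{(l,\mu)} \cap x\fS_{(m,\nu)}x^{-1}$, the element $u = x\prod t_i^{a_i}$ lies in $^{(l,\mu)} W^{(m,\nu)}(x)$ and is minimal in its orbit $O(u)$, so $z \odot u \succeq u$, i.e. $\ba(z \odot u) \succeq \ba(u)$ in the lexicographic order, where by \eqref{W act St I} one has $\ba(z \odot u)_{y(i)} = a_i$. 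Now $x(i) \le l$ gives $a_i = 0$ by (iv); I would then argue that, because $z$ permutes $[1,l]$ and $x(i) \le l$, applying $z$ to $x(i)$ stays $\le l$, and unwinding $z = xyx^{-1}$ shows $x(y(i)) = z(x(i)) \le l$, whence $y(i) \notin I(x)$ and $a_{y(i)} = 0$ directly — so in fact minimality may not even be needed, and the cleanest route is purely combinatorial via $x(y(i)) = z(x(i))$. The main obstacle is simply keeping the bookkeeping of the three permutations $x, y, z = xyx^{-1}$ straight, in particular checking carefully that $z \in \fS_{(l,\mu)}$ implies $z$ preserves $[1,l]$ setwise (which follows since $\fS_{(l,\mu)} \subseteq \fS_{(l,\mu_1,\mu_2,\dots)}$ has first block $[1,l]$), and then the identity $a_{y(i)} = 0$ drops out.
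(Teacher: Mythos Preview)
Your proof is correct and follows essentially the same approach as the paper's: parts (i)--(iv) are identical direct computations, and for (v) the paper uses exactly the combinatorial identity $x(y(i)) = (xyx^{-1})(x(i))$ together with $xyx^{-1} \in \fS_{(l,\mu)}$ preserving $[1,l]$, just as you eventually do. Your brief detour through the minimality of $u$ in $O(u)$ is unnecessary (as you yourself note), but it does no harm.
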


\begin{proof} 
(\roi). 
Note that $u^{-1} = \prod_{i=1}^n t_i^{-a_i} x^{-1} = x^{-1} \prod_{i=1}^n t_{x(i)}^{- a_i}$, 
and we have 
\begin{align*}
u y u^{-1} 
= ( x \prod_{i=1}^n t_i^{a_i}) y (x^{-1} \prod_{i=1}^n t_{x(i)}^{-a_i})
= x y x^{-1} (\prod_{i=1}^n t_{x y^{-1}(i)}^{a_i}) (\prod_{i=1}^n t_{x(i)}^{-a_i}) 
= xyx^{-1} \prod_{i=1}^n t_{x(i)}^{a_{y(i)} - a_i}. 
\end{align*}

(\roii). 
For $j=1,2,\dots,n$, we have 
\begin{align*}
u t_j u^{-1} 
= (x \prod_{i=1}^n t_i^{a_i})  t_j ( \prod_{i=1}^n t_i^{- a_i} x^{-1})
= x t_j x^{-1} 
= t_{x(j)}.
\end{align*}

(\roiii). 
Note that $y \in \fS_{(m,\nu)}$ and 
$[1, m] \cap I(x)=\emptyset$, 
we have  $a_{y(i)} = a_i =0$ if $i \in [1, m]$. 

(\roiv). 
If $a_i \not=0$, 
we have $i \in I(x)$. 
Thus, we can write 
$i= x^{-1} (l +j)$ for some $j \geq 1$. 
This implies that 
$x(i) > l $ if $a_i \not=0$. 

(\rov). 
If $a_{y(i)} \not=0$, 
we can write 
$y(i) = x^{-1}(l+j)$ for some $j \geq 1$. 
This implies that 
$x(i) = x y^{-1} x^{-1} (l+j)$. 
Note that $x y^{-1} x^{-1} = (x y x^{-1})^{-1} \in \fS_{(l, \mu)}$, 
we have that 
$x(i) >l$ if $a_{y(i)} \not=0$. 
\end{proof}

\para \label{W Def cu}
For $u = x \prod_{i \in I(x)} t_i^{a_i} \in \,^{(l,\mu) }W^{(m,\nu)}$, 
put 
\begin{align*}
c(u)= \min\{ c \geq 0 \mid x(c +1) \not= c+1 {\mbox{ or }} c=n\} 
\text{ and } 
k(u) = \min\{c(u), l,m\}, 
\end{align*} 
Put 
\begin{align}
\label{W def vG(u)}
\vG(u) = ( S_{(l,\mu)} \cap \{x s_{j} x^{-1} \in x S_{(m,\nu)} x^{-1} \mid a_j = a_{j+1}\} ) 
\cup 
\{t_1,t_2,\dots, t_{k(u)}\}, 
\end{align}
where we put $a_i=0$ if $i \not\in I(x)$. 

By the definition of $I(x)$, 
we see that $1,2, \dots, k(u) \not\in I(x)$ 
since $k(u) \leq m$,  
and we have 
\begin{align}
\label{W a1=aku=0}
a_1=a_2=\dots = a_{k(u)}=0.
\end{align} 
We also see that 
\begin{align} 
\label{W sj}
s_j = x s_j x^{-1} \in S_{(l,\mu)} \cap x S_{(m,\nu)} x^{-1} 
\text{ for } j=1,2,\dots, k(u)-1 
\end{align}
since $k(u) \leq l,m $ and 
$x \in \fS_{[k(u)+1,n]}$ by Lemma \ref{S Lemma}.  

On the other hand, 
we see that 
\begin{align*}
x s_j x^{-1} (k(u))
= \begin{cases} 
	x( k(u) +1) & \text{ if } j= k(u), 
	\\
	k(u)-1 & \text{ if } j=k(u)-1, 
	\\
	k(u) \text{ otherwise} 
	\end{cases}
\end{align*}
for $j=1,2, \dots, n$ 
since $x \in \fS_{[k(u)+1,n]}$ by Lemma \ref{S Lemma}. 
This implies that 
\begin{align} 
\label{W j=k(u)}
j= k(u)  \text{ and } 
x(k(u)+1)= k(u)+1  
\text{ if }
s_{k(u)} = x s_j x^{-1}. 
\end{align} 

By \eqref{W a1=aku=0}, \eqref{W sj} and \eqref{W j=k(u)},  
we see that 
\begin{align}
\label{W vG(u)}
\{s_1,s_2,\dots, s_{k(u)-1} \} \subset \vG(u) 
\text{ and }
s_{k(u)} \not\in \vG(u), 
\end{align} 
where we note that 
$s_m \not\in S_{(m,\nu)}$, $s_l \not\in S_{(l,\mu)}$ and $x(c(u)+1) \not= c(u)+1$. 

We define a composition $\pi(u)$ of $n - k(u)$ by 
\begin{align}
\label{W Def pi(u)}
S_{(k(u), \pi(u))} = \vG(u) \cap S. 
\end{align} 
We remark that 
\begin{align}
\label{W Skupiu}
\fS_{(k(u), \pi(u))} \subset \fS_{\tau(x)}= \fS_{(l,\mu)} \cap x \fS_{(m,\nu)} x^{-1} 
\end{align}
since 
$\fS_{\tau(x)}$  
is generated by 
$S_{(l,\mu)} \cap x S_{(m,\nu)} x^{-1}$ (see \eqref{W Def taux}). 

Thanks to \eqref{W vG(u)}, 
we see that 
the subgroup of $W_{n,r}$  generated by $\vG(u)$ coincides with 
the parabolic subgroup $W_{(k(u), \pi(u))} \cong (\fS_{k(u)} \ltimes (\ZZ / r \ZZ)^{k(u)}) \times \fS_{\pi(u)}$.
We remark that 
$W_{(k(u),\pi(u))}$ is also a parabolic subgroup of $W_{(l,\mu)}$. 

\para \label{W gen twist}
For $u = x \prod_{i =1}^n t_i^{a_i} \in \,^{(l,\mu) }W^{(m,\nu)}$, 
it is clear that 
$u^{-1} W_{(k(u), \pi(u))} u$ is generated by $u^{-1} \vG(u) u$ as a subgroup of $W_{n,r}$. 
For $s_{j'}  = x s_j x^{-1} \in \vG(u) \cap S$, 
we have 
\begin{equation} 
\label{W u-1 sj u=x-1 sj x}
u^{-1} s_{j'} u  
= x^{-1} s_{j'} x \prod_{i=1}^n t_{x^{-1} (i)}^{ a_{x^{-1}(i)} - a_{x^{-1} s_{j'}(i)}} 
= s_j \prod_{i=1}^n t_{x^{-1}(i)}^{a_{x^{-1}(i)} - a_{s_j x^{-1}(i)}}  
= s_j =x^{-1} s_{j'} x,  
\end{equation}
where we note that 
$a_i = a_{s_j(i)}$ for all $i=1,2,\dots,n$ by $s_{j'}= x s_j x^{-1} \in \vG(u)$. 
On the other hand, we see that 
\begin{align}
\label{W u-1 ti u}
u^{-1} t_i u = t_i \text{ for } i \in [1, k(u)] 
\end{align}  by Lemma \ref{W Lemma ai} (\roii) and the definition of $k(u)$. 
As a consequence, we have 
\begin{align}
\label{W u-1 Gu u}
u^{-1} \vG(u) u 
= (x^{-1} S_{(l,\mu)} x \cap \{s_j \in S_{(m,\nu)} \mid a_j = a_{j+1}\} )
	\cup \{t _1, t_2, \dots, t_{k(u)}\}. 
\end{align}
Moreover, we see that 
\begin{align}
\label{W j'=k(u)}
j' = k(u) \text{ and } x^{-1} (k(u) +1) = k(u) +1 \text{ if } s_{k(u)} = x^{-1} s_{j'} x. 
\end{align}
in a similar way to \eqref{W j=k(u)}. 
By \eqref{W a1=aku=0}, \eqref{W sj} and \eqref{W j'=k(u)},  
we see that 
\begin{align}
\label{W u-1 vG(u) u}
\{s_1,s_2,\dots, s_{k(u)-1} \} \subset u^{-1} \vG(u) u
\text{ and }
s_{k(u)} \not\in u^{-1} \vG(u) u, 
\end{align} 
where we note that 
$s_m \not\in S_{(m,\nu)}$, $s_l \not\in S_{(l,\mu)}$ and $x^{-1} (c(u)+1) \not= c(u)+1$. 

We define a composition $\pi^{\sharp}(u)$ of $n- k(u)$ by 
$S_{(k(u), \pi^{\sharp}(u))} = u^{-1} \vG(u) u \cap S$. 
Then we see that 
\begin{align}
\label{W S ku pi sharp u}
S_{(k(u), \pi^{\sharp}(u))} = x^{-1} S_{(k(u), \pi(u))} x 
\end{align}
and 
\begin{align}
\label{W u-1 Wkupiu u}
u^{-1} W_{(k(u), \pi(u))} u= W_{(k(u), \pi^{\sharp}(u))} \cong (\fS_{(k(u)} \ltimes (\ZZ/ r \ZZ)^{k(u)}) \times \fS_{\pi^{\sharp}(u)} 
\end{align}
by \eqref{W u-1 Gu u} and \eqref{W u-1 vG(u) u}. 
In particular, 
$u^{-1} W_{(k(u), \pi(u))} u$ is a parabolic subgroup of $W_{(m,\nu)}$.  


\begin{prop}
\label{W prop only braid}
For $ u = x \prod_{i \in I(X)} t_i^{a_i} \in {}^{(l,\mu)} W^{(m,\nu)}$, 
we have $W_{(k(u), \pi(u))} = u W_{(k(u), \pi^{\sharp}(u))} u^{-1}$ 
and $X_{(k(u), \pi(u))} = u X_{(k(u), \pi^{\sharp}(u))} u^{-1}$. 
In particular, 
for $s_j \in X_{(k(u), \pi(u))}$, 
there exists $s_{\psi(j)} \in X_{(k(u), \pi^{\sharp}(u))}$ such that 
$s_j = u s_{\psi(j)} u^{-1}$.  

Moreover, 
the identity  
\[
s_j  (s_{i_1} s_{i_2} \dots s_{i_l} \prod_{i \in I(X)} t_i^{a_i}) = (s_{i_1} s_{i_2} \dots s_{i_l} \prod_{i \in I(X)} t_i^{a_i}) s_{\psi(j)} 
\text{ for } s_j \in X_{(k(u), \pi(u))} 
\]
follows only from the braid relations associated with $W_{n,r}$, 
where $x = s_{i_1} s_{i_2} \dots s_{i_l}$ is a reduced expression of $x \in \fS_n$. 

\begin{proof} 
For $u = x \prod_{i \in I(X)} t_i^{a_i} \in {}^{(l,\mu)}W^{(m,\nu)}$, 
we have already seen that 
$W_{(k(u), \pi(u))} = u W_{(k(u), \pi^{\sharp}(u))} u^{-1}$ 
and 
$X_{(k(u), \pi(u))} = u X_{(k(u), \pi^{\sharp}(u))} u^{-1}$. 
Thus, for $s_j \in X_{(k(u), \pi(u))}$, 
there exists $s_{\psi(j)} \in X_{(k(u), \pi^{\sharp}(u))}$ such that 
$s_j = u s_{\psi(j)} u^{-1}$. 
Let $x = s_{i_1} s_{i_2} \dots s_{i_l}$ be a reduced expression of $x \in \fS_n$. 

It is easy to check that
the relations 
\begin{align}
\label{W rel follows braid relation}
\begin{split}
&s_i s_j = s_j s_i  \text{ if } |i-j|>1, 
\\
&t_i t_j = t_j t_i \,  (1\leq i , j \leq n), 
\\ 
&s_i t_j = t_j s_i \text{ if } j \not=i, i+1 
\\
&s_i t_i t_{i+1} = t_i t_{i+1} s_i 
\end{split}
\end{align}
follow only from the braid relations associated with $W_{n,r}$ by direct calculation.

Note that $x \in \fS_{[k(u)+1, n]}$ by Lemma \ref{S Lemma} and $s_0 = t_1$, 
we have 
$s_0 (s_{i_1} s_{i_2} \dots s_{i_l} \prod_{i \in I(x)} t_i^{a_i}) = (s_{i_1} s_{i_2} \dots s_{i_l} \prod_{i \in I(x)} t_i^{a_i}) s_0$ if $k(u) \not=0$ 
and this identity follows only from the braid relations. 

For $s_j \in X_{(k(u),\pi(u))} \setminus \{s_0\}$, 
we see that 
$s_j = x s_{\psi(j)} x^{-1}$ and $a_{\psi(j)} = a_{\psi(j)+1}$ 
by \eqref{W u-1 sj u=x-1 sj x}. 
Moreover, 
we see that 
$\ell (s_j x) = \ell(x) +1 = \ell(x s_{\psi(j)})$ 
since $s_j \in  \fS_{(l,\mu)}$, $s_{\psi(j)} \in \fS_{(m,\nu)}$ 
(see \eqref{W def vG(u)} and \eqref{W u-1 Gu u}),
and 
$x \in {}^{(l,\mu)} \fS^{(m,\nu)} = {}^{(l,\mu)} \fS \cap \fS^{(m,\nu)}$. 
Thus, 
the identity  
$s_j (s_{i_1} s_{i_2} \dots s_{i_l}) = (s_{i_1} s_{i_2} \dots s_{i_l}) s_{\psi(j)}$ follows only from the braid relations associated with $\fS_n$ 
by the general theory of Coxeter groups. 
Then we see that 
the identity $s_j  (s_{i_1} s_{i_2} \dots s_{i_l} \prod_{i \in I(X)} t_i^{a_i}) = (s_{i_1} s_{i_2} \dots s_{i_l} \prod_{i \in I(X)} t_i^{a_i}) s_{\psi(j)}$ 
follows only from the braid relations 
(note that $a_{\psi(j)} = a_{\psi(j)+1}$). 
\end{proof}

\end{prop}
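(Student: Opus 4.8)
The plan is to break the statement into two halves. The first half — that $W_{(k(u),\pi(u))} = u W_{(k(u),\pi^{\sharp}(u))} u^{-1}$, that $X_{(k(u),\pi(u))} = u X_{(k(u),\pi^{\sharp}(u))} u^{-1}$, and the consequent existence of $s_{\psi(j)}$ — is essentially already available: equations \eqref{W u-1 Gu u}, \eqref{W u-1 vG(u) u}, \eqref{W S ku pi sharp u} and \eqref{W u-1 Wkupiu u} in \ref{W gen twist}, combined with \eqref{W vG(u)} and \eqref{W Def pi(u)}, identify both parabolic subgroups explicitly and conjugate one into the other by $u$. So the first paragraph of the proof should just quote those formulas and record the map $\psi$. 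The real content is the second half: that the conjugation identity $s_j \bigl(s_{i_1}\cdots s_{i_l}\prod_{i\in I(x)} t_i^{a_i}\bigr) = \bigl(s_{i_1}\cdots s_{i_l}\prod_{i\in I(x)} t_i^{a_i}\bigr)\, s_{\psi(j)}$ is a consequence of the braid relations \emph{alone}, not of the order relations $s_0^r=1$ or $s_i^2=1$.

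The approach to the braid-only claim is a reduction to the symmetric group case plus bookkeeping of the $t_i$'s. First I would observe, by direct inspection, that all four auxiliary relations in \eqref{W rel follows braid relation} — commutation of far-apart $s_i$, commutation of all $t_i$, commutation of $s_i$ with $t_j$ for $j\neq i,i+1$, and $s_i (t_i t_{i+1}) = (t_i t_{i+1}) s_i$ — follow from the braid relations $s_0 s_1 s_0 s_1 = s_1 s_0 s_1 s_0$ and $s_i s_{i+1} s_i = s_{i+1} s_i s_{i+1}$ and $s_i s_j = s_j s_i$ for $|i-j|>1$; this is the computation flagged as "easy to check" and it genuinely is, since e.g. $t_2 = s_1 s_0 s_1$ and one manipulates words. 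Then I split into two cases according to whether $s_j = s_0$ (equivalently $j$ corresponds to the $t$-part of $X_{(k(u),\pi(u))}$, which happens only when $k(u)\neq 0$) or $s_j\in S$. In the $s_0$ case: since $x\in\fS_{[k(u)+1,n]}$ by Lemma \ref{S Lemma}, $s_0 (=t_1)$ commutes with every $s_{i_r}$ (each $i_r\geq k(u)+1\geq 2$) by \eqref{W rel follows braid relation}, and it commutes with each $t_i$ for $i\in I(x)\subset[m+1,n]$ again by \eqref{W rel follows braid relation}; so $s_0$ slides past the whole word $s_{i_1}\cdots s_{i_l}\prod t_i^{a_i}$ and $s_{\psi(0)}=s_0$.

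For $s_j\in X_{(k(u),\pi(u))}\setminus\{s_0\}$ the key points, all already established in the excerpt, are: (a) $s_j = x s_{\psi(j)} x^{-1}$ with $a_{\psi(j)} = a_{\psi(j)+1}$, from \eqref{W u-1 sj u=x-1 sj x} and the definition \eqref{W def vG(u)}; (b) because $s_j\in\fS_{(l,\mu)}$, $s_{\psi(j)}\in\fS_{(m,\nu)}$ and $x\in{}^{(l,\mu)}\fS^{(m,\nu)} = {}^{(l,\mu)}\fS\cap\fS^{(m,\nu)}$, one has $\ell(s_j x) = \ell(x)+1 = \ell(x s_{\psi(j)})$, so by the standard theory of Coxeter groups (the exchange/matsumoto property for $\fS_n$) the identity $s_j (s_{i_1}\cdots s_{i_l}) = (s_{i_1}\cdots s_{i_l}) s_{\psi(j)}$ follows from the braid relations of $\fS_n$ alone; (c) finally one must carry the $t$-factors through — but conjugating $\prod_{i\in I(x)} t_i^{a_i}$ by $s_{\psi(j)}$ only swaps the exponents $a_{\psi(j)}$ and $a_{\psi(j)+1}$, which are equal by (a), so $\prod t_i^{a_i}$ is genuinely fixed, and the moves needed to commute $s_{\psi(j)}$ past $\prod t_i^{a_i}$ are exactly instances of \eqref{W rel follows braid relation} (since $t_{\psi(j)} t_{\psi(j)+1}$ commutes with $s_{\psi(j)}$ and all other $t_i$ commute with $s_{\psi(j)}$ individually). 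Concatenating the braid-only word transformations from (b) and (c) yields the claim.

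The main obstacle is step (c): making precise that "conjugation by $s_{\psi(j)}$ fixes $\prod_{i\in I(x)}t_i^{a_i}$ via braid relations only" really requires that $a_{\psi(j)}=a_{\psi(j)+1}$ (not merely that the multiset of exponents is preserved), because without $s_i^2=1$ one cannot freely cancel; one has to move $s_{\psi(j)}$ rightward past the block $t_{\psi(j)}^{a}t_{\psi(j)+1}^{a}$ using $s_i(t_it_{i+1})=(t_it_{i+1})s_i$ applied $a$ times, and past every other $t_i^{a_i}$ using pure commutation — and one must check the indices $\psi(j),\psi(j)+1$ really are the only ones among $[m+1,n]\supset I(x)$ that interact with $s_{\psi(j)}$, which is where the constraint $s_{\psi(j)}\in\fS_{(m,\nu)}$ and $I(x)\subset[m+1,n]$ enters. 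Everything else is routine word manipulation of the kind already rehearsed in Lemma \ref{W Lemma coset rep}.
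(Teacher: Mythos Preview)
Your proposal is correct and follows essentially the same approach as the paper's own proof: quote \eqref{W u-1 Gu u}--\eqref{W u-1 Wkupiu u} for the first half, then split into the $s_0$ case (handled via $x\in\fS_{[k(u)+1,n]}$ from Lemma~\ref{S Lemma}) and the $s_j\neq s_0$ case (handled via the Coxeter length argument for $\fS_n$ plus $a_{\psi(j)}=a_{\psi(j)+1}$ to pass the $t$-block). Your step~(c) spells out in more detail what the paper compresses into the parenthetical ``note that $a_{\psi(j)}=a_{\psi(j)+1}$'', but the argument is the same.
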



\begin{prop}
\label{W Prop Wkupiu}
For $u = x \prod_{i \in I(x)} t_i^{a_i} \in \, ^{(l,\mu)} W^{(m,\nu)}$, 
the subgroup $W_{(l,\mu)} \cap u W_{(m,\nu)} u^{-1}$ of $W_{n,r}$ 
is generated by $\vG(u)$. 
In particular, 
we have 
\[
W_{(l,\mu)} \cap u W_{(m,\nu)} u^{-1}  
= W_{(k(u), \pi(u))} \cong (\fS_{k(u)} \ltimes (\ZZ / r \ZZ)^{k(u)}) \times \fS_{\tau(u)}. 
\]
\end{prop}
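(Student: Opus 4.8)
The plan is to establish $W_{(l,\mu)} \cap u W_{(m,\nu)} u^{-1} = W_{(k(u),\pi(u))}$ by a double inclusion; the claimed isomorphism type on the right is then exactly the description of $W_{(k(u),\pi(u))}$ recorded in \ref{W Def cu}. The inclusion $\supseteq$ is immediate from what has already been proved: by \eqref{W u-1 Wkupiu u}, $W_{(k(u),\pi(u))} = u\,W_{(k(u),\pi^{\sharp}(u))}\,u^{-1}$ with $W_{(k(u),\pi^{\sharp}(u))} \subseteq W_{(m,\nu)}$, while $W_{(k(u),\pi(u))}$ is a parabolic subgroup of $W_{(l,\mu)}$; hence $W_{(k(u),\pi(u))} \subseteq W_{(l,\mu)} \cap u W_{(m,\nu)} u^{-1}$. (Equivalently one checks the generators $\vG(u)$ lie in the intersection, using \eqref{W u-1 sj u=x-1 sj x} and \eqref{W u-1 ti u}.) All the work is in the reverse inclusion.

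So let $g \in W_{(l,\mu)} \cap u W_{(m,\nu)} u^{-1}$ and set $w := u^{-1} g u \in W_{(m,\nu)}$, written $w = y \prod_{j=1}^{m} t_j^{c_j}$ with $y \in \fS_{(m,\nu)}$. Lemma \ref{W Lemma ai}(\roi),(\roii) gives
\[
g = u w u^{-1} = x y x^{-1} \prod_{i=1}^{n} t_{x(i)}^{\,d_i}, \qquad d_i = a_{y(i)} - a_i + c_i
\]
(with the convention $c_i := 0$ for $i > m$). Since $g \in W_{(l,\mu)}$, its $\fS_n$-part forces $x y x^{-1} \in \fS_{(l,\mu)}$, and its torus part forces $d_i = 0$ whenever $x(i) > l$. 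I would then feed in Lemma \ref{W Lemma ai}(\roiii)--(\rov): $a_{y(i)} = a_i = 0$ for $i \le m$; $a_i = 0$ when $x(i) \le l$; and (using $x y x^{-1} \in \fS_{(l,\mu)}$) $a_{y(i)} = 0$ when $x(i) \le l$. A short case analysis then shows $a_{y(i)} = a_i$ for every $i$, i.e. $y$ fixes the tuple $\ba(u)$.

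Next, Lemma \ref{S Lemma} applied with $k = k(u)$ gives $x \in \fS_{[k(u)+1,n]}$ and $[1,l] \cap \{x(1),\dots,x(m)\} = [1,k(u)]$, so that for $i \le m$ one has $x(i) \le l$ precisely when $i \le k(u)$. Consequently every $j$ with $k(u) < j \le m$ has $x(j) > l$, whence $d_j = 0$ together with $a_{y(j)} = a_j = 0$ forces $c_j = 0$. Thus $\prod_{j=1}^{m} t_j^{c_j} \in W_{k(u),r}$, and it remains only to show $y \in \fS_{(k(u),\pi^{\sharp}(u))}$ --- for then $w \in W_{(k(u),\pi^{\sharp}(u))}$ and $g = u w u^{-1} \in u\,W_{(k(u),\pi^{\sharp}(u))}\,u^{-1} = W_{(k(u),\pi(u))}$ by \eqref{W u-1 Wkupiu u}.

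For this last point: since $x^{-1} \in {}^{(m,\nu)}\fS^{(l,\mu)}$, the double-coset theory of \S\ref{symm} identifies $\fS_{(m,\nu)} \cap x^{-1}\fS_{(l,\mu)}x$ with the standard parabolic $\fS_{J}$ generated by $J := S_{(m,\nu)} \cap x^{-1}S_{(l,\mu)}x$, and $y = x^{-1}(xyx^{-1})x$ lies in $\fS_J$. This is the one place where the hypothesis $u \in {}^{(l,\mu)}W^{(m,\nu)}$ (minimality of $u$ in $O(u)$), rather than merely $u \in {}^{(l,\mu)}W \cap W^{(m,\nu)}$, is used: applying $z \odot u \succeq u$ with $z = x s_j x^{-1}$ for $s_j \in J$ yields $a_j \le a_{j+1}$, so $\ba(u)$ is non-decreasing along each block of $\fS_J$. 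For a standard parabolic $P \le \fS_n$ and a tuple non-decreasing along each block of $P$, the $P$-stabilizer of that tuple is the standard parabolic generated by $\{s_j \in S_P : a_j = a_{j+1}\}$ (its blocks being the constant runs of the tuple); taking $P = \fS_J$ and using \eqref{W S ku pi sharp u} to identify $S_{(k(u),\pi^{\sharp}(u))}$ with $\{s_j \in J : a_j = a_{j+1}\}$, we conclude $y \in \fS_{(k(u),\pi^{\sharp}(u))}$, as required. I expect this final paragraph to be the main obstacle: without the minimality of $u$ the $\fS_J$-stabilizer of $\ba(u)$ need not be a standard parabolic (its colour classes need not be intervals), so the lexicographic minimality is precisely what makes the generating set $\vG(u)$ exhaust the whole intersection.
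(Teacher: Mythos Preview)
Your proof is correct and follows essentially the same approach as the paper: both compute $uwu^{-1}$ for $w=y\prod t_j^{c_j}\in W_{(m,\nu)}$ via Lemma~\ref{W Lemma ai}, deduce from $uwu^{-1}\in W_{(l,\mu)}$ that $xyx^{-1}\in\fS_{(l,\mu)}$, $a_{y(i)}=a_i$ for all $i$, and $c_j=0$ for $j>k(u)$ (using Lemma~\ref{S Lemma}), and then invoke the lexicographic minimality of $u$ in $O(u)$ to pin down the $\fS$-part. The only cosmetic difference is that you work on the $W_{(m,\nu)}$ side and phrase the final step as ``$\ba(u)$ is weakly increasing on each $\fS_J$-block, hence its $\fS_J$-stabilizer is the standard parabolic $\fS_{(k(u),\pi^{\sharp}(u))}$'', whereas the paper works on the $W_{(l,\mu)}$ side and argues directly that every simple reflection in a reduced expression of $y$ satisfies $a_{i_j}=a_{i_j+1}$; these are equivalent formulations of the same combinatorial fact.
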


\begin{proof} 
Put $a_j =0$ for $j \not\in I(x)$. 
For $w=y \prod_{i=1}^m t_i^{b_i} \in W_{(m,\nu)}$ ($y \in \fS_{(m,\nu)}$), 
we have 
\begin{align*}
u w u^{-1} 
= uyu^{-1} \prod_{i=1}^m (u t_i u^{-1})^{b_i}
= x y x^{-1} (\prod_{i=1}^n t_{x(i)}^{a_{y(i)} - a_i}) (\prod_{i=1}^m t_{x(i)}^{b_i}) 
\end{align*} 
by Lemma \ref{W Lemma ai} (\roi) and (\roii). 
We have 
\begin{align}
\label{W ayi 1 to m}
a_{y(i)} = a_i =0  \text{ for } i \in [1,m] 
\end{align}  
by Lemma \ref{W Lemma ai} (\roiii). 
Thus we have 
\begin{align}
\label{W uwu-1}
u w u^{-1}  
= x y x^{-1} (\prod_{i=1}^m t_{x(i)}^{b_i}) (\prod_{i=m+1}^n t_{x(i)}^{a_{y(i)} - a_i}).
\end{align}
Suppose that $u w u^{-1} \in W_{(l,\mu)} = (\fS_{l} \ltimes (\ZZ/r\ZZ)^l) \times \fS_{\mu}^{[l]}$. 
Then we have 
\begin{align}
\label{W ayi bi}
xyx^{-1} \in \fS_{(l,\mu)}, 
\quad 
a_{y(i)} = a_i \text{ if } x(i) >l, 
\quad 
b_i =0 \text{ if } x(i) >l 
\end{align}
by \eqref{W uwu-1}. 
Since $x y x^{-1} \in \fS_{(l,\mu)}$, 
we have 
\begin{align}
\label{W ayi=0}
a_{y(i)} = a_i =0 \text{ if } x(i) \leq l 
\end{align}
by Lemma \ref{W Lemma ai} (\roiv) and (\rov). 
Moreover, we see that 
\begin{align}
\label{W 1 to ku}
[1, l] \cap \{x(1), \dots, x(m)\} =[1, k(u)]
\end{align} 
by Lemma \ref{S Lemma}, 
where we note that 
$x(i) =i$ for $i \in [1, k(u)]$. 
As a consequence 
of \eqref{W ayi 1 to m}, \eqref{W uwu-1}, \eqref{W ayi bi}, \eqref{W ayi=0} and \eqref{W 1 to ku},
we have that 
\begin{align}
\label{W xyx-1}
x y x^{-1} \in \fS_{(l,\mu)}, \, 
a_{y(i)} = a_i \, (1\leq i \leq n) 
\text{ and } 
b_j =0 \, (j > k(u)) 
\end{align}
if $uwu^{-1} \in W_{(l,\mu)}$. 
On the other hand, 
it is clear that $u w u^{-1} \in W_{(l,\mu)}$ if \eqref{W xyx-1} holds 
for $w=y \prod_{i=1}^m t_i^{b_i} \in W_{(m,\nu)}$ ($y \in \fS_{(m,\nu)}$). 
Thus, we see that 
$W_{(l,\mu)} \cap u W_{(m,\nu)} u^{-1}$ is generated by 
\begin{align*}
\hat{\vG}(u) := 
&(\fS_{(l,\mu)} \cap \{ x y x^{-1} \mid y\in \fS_{(m,\nu)} \text{ such that } a_{y(i)} =a_i \, (1\leq i \leq n)\} ) 
\\
&\cup 
\{ t_1, t_2, \dots, t_{k(u)}\}. 
\end{align*}

For $z = x y x^{-1} \in \fS_{\tau(x)} =\fS_{(l,\mu)} \cap x \fS_{(m,\nu)} x^{-1}$, 
let $y=s_{i_1} s_{i_2} \dots s_{i_p}$ be a reduced expression. 
Then we see that 
$x s_{i_j} x^{-1} \in  \fS_{\tau(x)}$ ($j=1,2,\dots,p$)
since 
$\fS_{\tau(x)}$  
is generated by 
$S_{(l,\mu)} \cap x S_{(m,\nu)} x^{-1}$. 
We claim that 
\begin{align}
\label{W claim a}
a_{i_j} = a_{i_{j+1}} \, (1 \leq j \leq p) 
\text{ if } 
a_{y(i)} = a_i \, (1\leq i \leq n).
\end{align}
Then the claim \eqref{W claim a} implies that 
$\hat{\vG}(u) \supset \vG(u)$, 
and we easily see that 
$W_{(l,\mu)} \cap u W_{(m,\nu)} u^{-1}$ 
is generated by 
$\vG(u)$.

We prove the claim \eqref{W claim a}. 
We have 
\begin{align}
\label{W i}
\{i_1,i_2,\dots, i_k\} = \bigcup_{i< y(i)} \{i, i+1, \dots,  y(i)-1\} \cup \bigcup_{i> y(i)} \{y(i), y(i)+1,\dots, i-1\}.
\end{align}
Suppose that $i < y(i)$ and $a_{y(i)} =a_i$. 
By \eqref{W i}, 
we see that $x s_j x^{-1} \in \fS_{\tau(x)}$ ($i \leq j < y(i)$), 
and we have 
\begin{align*}
(x s_j x^{-1}) \odot u 
= x (t_1^{a_1} \dots t_{j-1}^{a_{j-1}}) (t_j^{a_{j+1}} t_{j+1}^{a_j}) (t_{j+2}^{a_{j+2}} \dots t_n^{a_n}) 
\in O(u).
\end{align*}
If there exists $j$ ($i \leq j <y(i)$) such that $a_i \leq a_{i+1} \leq  \dots \leq a_{j}$ and $a_j > a_{j+1}$, 
we have 
$\ba(u) \succ \ba((x s_j x^{-1}) \odot u)$. 
This is a contradiction since $u$ is minimal  in $O(u)$. 
Thus, we have $a_i \leq a_{i+1} \leq \dots \leq a_{y(i)}$, 
and $a_i = a_{i+1} = \dots = a_{y(i)}$ by $a_{y(i)}=a_i$. 
Similarly, we have 
$a_{y(i)} = a_{y(i)+1} = \dots = a_i$ if $i > y(i)$ and $a_{y(i)} =a_i$. 
Then we have the claim \eqref{W claim a}. 
\end{proof}


\para \label{W Def rep Wlmu Wkupiu}
For $u \in \,^{(l,\mu)} W^{(m,\nu)}$, 
the group $W_{(k(u), \pi(u))} = W_{(l,\mu)} \cap u W_{(m,\nu)} u^{-1}$ 
is a parabolic subgroup of $W_{(l,\mu)}$ 
by  Proposition \ref{W Prop Wkupiu}. 
Put 
\begin{multline*}
(W_{(l,\mu)})^{(k(u), \pi(u))} 
\\
= \{ x t_{k(u)+1}^{a_{k(u)+1}} t_{k(u)+2}^{a_{k(u)+2}} \dots t_l^{a_l} 
	\mid x \in (\fS_{(l,\mu)})^{(k(u), \pi(u))}, \, 
	a_{k(u)+1}, \dots, a_l \in [0,r-1] \}, 
\end{multline*}
where 
$(\fS_{(l,\mu)})^{(k(u), \pi(u))}$ is the set of distinguished coset representatives of 
the cosets 
$\fS_{(l,\mu)} / \fS_{(k(u), \pi(u))}$. 
Then $(W_{(l,\mu)})^{(k(u), \pi(u))} $ is a complete set of representatives of $W_{(l,\mu)}/ W_{(k(u), \pi(u))}$ 
which is proven in a similar way to the proof of Lemma \ref{W Lemma coset rep}. 
We have the following corollary. 


\begin{cor}
\label{W Cor}
For each $u \in \,^{(l,\mu)} W^{(m,\nu)}$, 
the multiplication map $($in $W)$ 
\begin{align*}
(W_{(l,\mu)})^{(k(u), \pi(u))} \times \{u\} \times W_{(m,\nu)} 
\ra W_{(l,\mu)} u W_{(m,\nu)}, 
\quad 
(w_1, u, w_2) \mapsto w_1 u w_2
\end{align*} 
is a bijection. 
\end{cor}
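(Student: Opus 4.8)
The plan is to deduce the corollary from the identification $W_{(l,\mu)} \cap u W_{(m,\nu)} u^{-1} = W_{(k(u),\pi(u))}$ established in Proposition~\ref{W Prop Wkupiu}, together with the fact recorded in \S\ref{W Def rep Wlmu Wkupiu} that $(W_{(l,\mu)})^{(k(u),\pi(u))}$ is a complete set of representatives of the cosets $W_{(l,\mu)}/W_{(k(u),\pi(u))}$, and the relation \eqref{W u-1 Wkupiu u}, which tells us that $u^{-1} W_{(k(u),\pi(u))} u = W_{(k(u),\pi^{\sharp}(u))}$ is contained in $W_{(m,\nu)}$. This is the standard orbit/stabilizer argument for double cosets, so no genuinely new input is needed; the map plainly takes values in $W_{(l,\mu)} u W_{(m,\nu)}$, and it remains to check surjectivity and injectivity.

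First I would prove surjectivity. An arbitrary element of $W_{(l,\mu)} u W_{(m,\nu)}$ has the form $a u b$ with $a \in W_{(l,\mu)}$ and $b \in W_{(m,\nu)}$. Writing $a = w_1 h$ with $w_1 \in (W_{(l,\mu)})^{(k(u),\pi(u))}$ and $h \in W_{(k(u),\pi(u))}$, we obtain $a u b = w_1 u (u^{-1} h u) b$; since $u^{-1} h u \in u^{-1} W_{(k(u),\pi(u))} u = W_{(k(u),\pi^{\sharp}(u))} \subset W_{(m,\nu)}$ by \eqref{W u-1 Wkupiu u}, the element $w_2 := (u^{-1} h u) b$ lies in $W_{(m,\nu)}$, and $a u b = w_1 u w_2$ is of the desired form.

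Next I would prove injectivity. Suppose $w_1 u w_2 = w_1' u w_2'$ with $w_1, w_1' \in (W_{(l,\mu)})^{(k(u),\pi(u))}$ and $w_2, w_2' \in W_{(m,\nu)}$. Then $(w_1')^{-1} w_1 = u (w_2' w_2^{-1}) u^{-1}$ lies in $W_{(l,\mu)} \cap u W_{(m,\nu)} u^{-1} = W_{(k(u),\pi(u))}$ by Proposition~\ref{W Prop Wkupiu}, so $w_1$ and $w_1'$ lie in the same coset of $W_{(l,\mu)}/W_{(k(u),\pi(u))}$; since both are distinguished coset representatives, $w_1 = w_1'$. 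Cancelling then gives $u w_2 = u w_2'$, hence $w_2 = w_2'$, which completes the proof of bijectivity.

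As for difficulty, essentially all the substantive work has already been carried out in Propositions~\ref{W Prop Wkupiu} and \ref{W prop only braid} and in the setup of \S\ref{W Def cu}. The only point needing care is the assertion that $(W_{(l,\mu)})^{(k(u),\pi(u))}$ is an irredundant transversal of $W_{(l,\mu)}/W_{(k(u),\pi(u))}$, but this is obtained exactly as in the proof of Lemma~\ref{W Lemma coset rep}, using that $W_{(k(u),\pi(u))}$ is a standard parabolic subgroup of $W_{(l,\mu)}$ (cf. \eqref{W vG(u)} and Proposition~\ref{W Prop Wkupiu}). Thus the main obstacle here is bookkeeping rather than any conceptual hurdle.
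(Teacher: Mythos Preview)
Your proof is correct and follows essentially the same approach as the paper's own proof. The injectivity argument is identical to the paper's; for surjectivity, the paper simply states ``By definitions, it is clear that the map is surjective,'' whereas you spell out explicitly the decomposition $a = w_1 h$ and the use of \eqref{W u-1 Wkupiu u}, which is exactly what the paper has in mind.
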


\begin{proof} 
By definitions, 
it is clear that the map is surjective. 
On the other hand, 
if $w_1 u w_2 = w'_1 u w'_2$ 
for 
$w_1, w'_1 \in (W_{(l,\mu)})^{(k(u), \pi(u))}$ 
and 
$w_2, w'_2 \in W_{(m,\nu)}$, 
we have 
\begin{align*} 
w_1^{-1} w'_1 = u w_2 w'^{-1}_2 u^{-1} \in W_{(l,\mu)} \cap u W_{(m,\nu)} u^{-1} = W_{(k(u), \pi(u))}.
\end{align*}
This implies that $w_1 = w'_1$ 
since $w_1, w'_1 \in (W_{(l,\mu)})^{(k(u), \pi(u))}$. 
Thus we also have $w_2 = w'_2$,  
and the map is injective. 
\end{proof}



\section{The Mackey formula for cyclotomic Hecke algebras}
\label{H Section}

In this section, 
we construct various $R$-free basis of the cyclotomic Hecke algebra $\sH_{n,r}$ 
associated with $W_{n,r}$ 
which are compatible with the decomposition of $W_{n,r}$ to the cosets 
$W/ W_{(l,\mu)}$ and the double cosets $W_{(l,\mu)} \backslash W_{n,r} / W_{(m,\nu)}$. 
Then we establish the Mackey formula for cyclotomic Hecke algebras. 

\para \label{H Def}
Let $R$ be a commutative ring, 
and take parameters 
$q, Q_1, Q_2,\dots, Q_r \in R$ such that $q$ is invertible in $R$. 
The cyclotomic Hecke algebra (Ariki-Koike algebra) $\sH_{n,r} =\sH (W_{n,r})$ 
associated with $W_{n,r}$ is the associative algebra with $1$ over $R$ 
generated by 
$T_0, T_1, \dots, T_{n-1}$ with the following defining relations: 
\begin{align}
\label{H relations}
\begin{split} 
&(T_0-Q_1)(T_0-Q_2) \dots (T_0-Q_r)=0, 
\quad 
(T_i +1)(T_i-q) =0 \quad (1 \leq i \leq n-1), 
\\
& T_0 T_1 T_0 T_1 = T_1 T_0 T_1 T_0, 
\quad T_i T_{i+1} T_i = T_{i+1} T_i T_{i+1} \quad (1 \leq i \leq n-2), 
\\
& T_i T_j = T_j T_i \quad (|i-j|>1). 
\end{split}
\end{align}

The subalgebra of $\sH_{n,r}$ generated by 
$T_1, T_2,\dots, T_{n-1}$ is isomorphic to the Iwahori-Hecke algebra $\sH(\fS_n)$ associated with $\fS_n$. 
For $x \in \fS_n$, 
put $T_x = T_{i_1} T_{i_2} \dots T_{i_l}$ for a reduced expression $x=s_{i_1} s_{i_2} \dots s_{i_l}$, 
and $\{T_x \mid x \in \fS_n\}$ is an $R$-free basis of $\sH(\fS_n)$. 

Set $L_i= q^{1-i} T_{i-1} \dots T_1 T_0 T_1 \dots T_{i-1}$ for $i=1,2, \dots, n$.  
For $w = x t_1^{a_1} \dots t_n^{a_n} \in W_{n,r}$ ($x \in \fS_n$, $ a_1,\dots, a_n \in [0,r-1]$), 
put 
$T_w = T_x L_1^{a_1} L_2^{a_2} \dots L_n^{a_n}$. 
Then we have that 
$\{T_w \mid w \in W_{n,r}\}$ 
is an $R$-free basis of $\sH_{n,r}$ by \cite[Theorem 3.10]{AK}. 

For a parabolic subgroup $W_{(l,\mu)}$ of $W_{n,r}$, 
we define the subalgebra $\sH_{(l,\mu)}$ 
of $\sH_{n,r}$ generated by 
$T_0$ (in the case where $l \geq 1$) and $T_x$ ($x \in \fS_{(l,\mu)}$) 
is isomorphic to the cyclotomic Hecke algebra $\sH(W_{(l,\mu)})$ 
associated with $W_{(l,\mu)}$. 
We see easily that 
$\{T_w \mid w \in W_{(l,\mu)}\}$ 
is an $R$-free basis of $\sH_{(l,\mu)}$. 

The following properties are well known, 
and one can check them by direct calculation using the defining relations. 

\begin{lem}
\label{H Lemma L T}
We have the following. 
\begin{enumerate} 
\item 
$L_i$ and $L_j$ commute with each other for any $1\leq i, j \leq n$. 

\item 
$T_i$ and $L_j$ commute with each other if $j \not=i, i+1$. 

\item 
$T_i$ commutes with both $L_i L_{i+1}$ and $L_i + L_{i+1}$. 

\item 
$L_{i+1}^b T_i = T_i L_i^b + (q-1) \sum_{c=0}^{b-1} L_i^c L_{i+1}^{b-c}$.  

\item 
$L_i^b T_i = T_i L_{i+1}^b - (q-1) \sum_{c=0}^{b-1} L_i^c L_{i+1}^{b-c}$. 

\end{enumerate}
\end{lem}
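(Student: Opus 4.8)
The statement of Lemma~\ref{H Lemma L T} records five identities governing the interaction of the Jucys–Murphy-type elements $L_1,\dots,L_n$ with the generators $T_1,\dots,T_{n-1}$ of the Iwahori–Hecke subalgebra. Since all five are assertions about elements of $\sH_{n,r}$ defined by explicit words in the generators, the plan is purely computational: reduce everything to the defining relations \eqref{H relations}, exploiting the recursive shape $L_i = q^{1-i} T_{i-1}\cdots T_1 T_0 T_1\cdots T_{i-1}$ and, crucially, the rank-$2$ relations $L_{i+1} = q^{-1} T_i L_i T_i$ and $T_i L_j = L_j T_i$ for $j \neq i, i+1$ (which is itself the content of part (ii) and follows instantly from the commutation $T_iT_k = T_kT_i$ for $|i-k|>1$ after expanding $L_j$). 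The key reduction is that, once (ii) is established, each of the remaining parts reduces to a computation inside the rank-$2$ subalgebra generated by $T_i$, $L_i$, $L_{i+1}$ — i.e.\ essentially the Ariki–Koike algebra $\sH_{2,r}$ — so I only ever need to manipulate two $L$'s and one $T$ at a time.

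First I would prove (ii): expand $L_j = q^{1-j} T_{j-1}\cdots T_1 T_0 T_1 \cdots T_{j-1}$ and observe that if $j \neq i, i+1$ then $T_i$ commutes with every factor $T_k$ ($|k-i|>1$) and with $T_0$, hence with the whole word. Next I would derive the basic transport identity $L_{i+1} = q^{-1}T_i L_i T_i$ directly from the definitions (telescoping the two words for $L_i$ and $L_{i+1}$ and using $T_i^{-1} = q^{-1}(T_i - (q-1))$ from $(T_i+1)(T_i-q)=0$); equivalently $T_i L_i T_i = q L_{i+1}$ and $T_i L_{i+1} T_i = q L_i$ (the latter by symmetry, since $T_i L_{i+1} = q L_i T_i^{-1}$ rearranges to the same kind of statement). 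For (iv) I would compute $L_{i+1}^b T_i$ by first writing $L_{i+1}^b = q^{-b}(T_i L_i T_i)^b = q^{-b} T_i L_i^b T_i$ (the middle $T_i$'s cancel against $T_i^2$ via the quadratic relation, contributing the correction terms), or more cleanly by induction on $b$: the base case $b=1$ is the single identity $L_{i+1}T_i = T_i L_i + (q-1)L_{i+1}$, got by substituting $L_{i+1} = q^{-1}T_iL_iT_i$ and $T_i^2 = (q-1)T_i + q$; the inductive step multiplies the hypothesis on the left by $L_{i+1}$, uses part (i) (commutativity of the $L$'s) to move $L_{i+1}$ past $L_i^c L_{i+1}^{b-c}$, and reassembles the sum. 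Part (v) is then obtained from (iv) by applying it with the roles of $L_i$ and $L_{i+1}$ swapped — legitimate because $T_i L_{i+1}T_i = qL_i$ gives the mirror transport relation — or by left-multiplying (iv) by $T_i$ and simplifying. Parts (i) and (iii) I would prove before or alongside these: (i) follows by induction on $|i-j|$ using the transport relation to reduce to adjacent indices and then a direct rank-$2$ check that $L_i$ commutes with $q^{-1}T_iL_iT_i$; (iii) follows because $L_iL_{i+1} = q^{-1}L_i T_i L_i T_i$ and $T_i(L_iL_{i+1}) = (T_iL_iT_i)L_i \cdot$ (rearranged) — more efficiently, $L_i + L_{i+1}$ and $L_iL_{i+1}$ are exactly the elementary symmetric functions that the reflection $s_i$ fixes, and the identity $T_i L_i^b = $ [part (v)] together with $T_i L_{i+1}^b$ gives $T_i(L_i + L_{i+1}) = (L_i + L_{i+1})T_i$ and $T_i L_i L_{i+1} = L_i L_{i+1} T_i$ after the correction sums cancel.

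\textbf{Main obstacle.} There is no conceptual obstacle here — the lemma is explicitly flagged as well known and checkable by direct calculation — so the only real work is bookkeeping: getting the signs and the ranges of the correction sums $\sum_{c=0}^{b-1} L_i^c L_{i+1}^{b-c}$ exactly right in the inductive step for (iv) and (v), and making sure the cancellations in (iii) (where the two correction sums from $T_iL_i^b$ and $T_iL_{i+1}^b$ must annihilate each other after symmetrization) are genuine. Because the paper only wants a plan and labels these as routine, I would in the actual write-up either cite \cite{AK} or simply say the identities follow from the defining relations by a direct computation, spelling out only the $b=1$ cases of (iv)/(v) and the transport relation $T_iL_iT_i = qL_{i+1}$ as the load-bearing steps.
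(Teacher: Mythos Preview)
Your plan is correct and matches the paper, which gives no proof beyond the sentence ``one can check them by direct calculation using the defining relations''; your outline is exactly such a calculation, and in fact supplies considerably more detail than the paper does. One small slip: in your argument for (ii), the claim that ``$T_i$ commutes with every factor'' of $L_j$ is only literally true when $j<i$; when $j>i+1$ the word for $L_j$ contains $T_{i-1},T_i,T_{i+1}$, and you need two applications of the braid relation $T_iT_{i+1}T_i=T_{i+1}T_iT_{i+1}$ (once on each side of $T_0$) to push $T_i$ through --- but this is routine and does not affect the rest of your plan.
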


Lemma \ref{H Lemma L T} implies the following lemma: 
\begin{lem} 
\label{H Lemma Tx L}
 For $k \geq 0$, $x \in \fS_{(k, n-k)}  $ and  
 $ a_{k+1},\dots, a_n \in {[0,r-1]} $, 
we have 
\begin{align*}
&T_x (L_{k+1}^{a_{k+1}} L_{k+2}^{a_{k+2}} \dots L_{n}^{a_n})
\\
&=(L_{x(k+1)}^{a_{k+1}} L_{x(k+2)}^{a_{k+2}} \dots L_{x(n)}^{a_n})T_x 
 + \sum_{y  < x} \sum_{(b_{k+1},\dots, b_{n}) \in
 {[0,r-1]}^{n-k} }
	r_{y}^{(b_{k+1},\dots, b_n)} T_y (L_{k+1}^{b_{k+1}} L_{k+2}^{b_{k+2}} \dots L_n^{b_n}) 
\end{align*}
for some $r_{y}^{(b_1,\dots, b_n)} \in R$. 
\end{lem}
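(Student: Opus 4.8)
The plan is to prove the identity by induction on the Coxeter length $\ell(x)$. Recall that $x\in\fS_{(k,n-k)}$, so $x$ preserves $\{1,\dots,k\}$ and $\{k+1,\dots,n\}$, and --- since $\fS_{(k,n-k)}$ is a standard parabolic subgroup of $\fS_n$ --- every reduced word for $x$ uses only generators from $S_{(k,n-k)}=S\setminus\{s_k\}$. For $\ell(x)=0$ both sides equal $L_{k+1}^{a_{k+1}}\cdots L_n^{a_n}$. Before the inductive step I would record the following elementary consequence of Lemma~\ref{H Lemma L T}(i),(ii),(iv),(v): for every $s_i\in S_{(k,n-k)}$ and every $R$-linear combination $N$ of monomials in $L_{k+1},\dots,L_n$,
\[
T_iN=N_1T_i+P_1,\qquad NT_i=T_iN_2+P_2,
\]
where $N_1,N_2,P_1,P_2$ are again $R$-linear combinations of monomials in $L_{k+1},\dots,L_n$, and $N_1,N_2$ are obtained from $N$ by interchanging, in each monomial, the exponents of $L_i$ and $L_{i+1}$. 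Indeed, if $i\le k-1$ this is just Lemma~\ref{H Lemma L T}(ii) with $P_1=P_2=0$; if $i\ge k+1$, Lemma~\ref{H Lemma L T}(ii) reduces the claim to computing $T_iL_i^aL_{i+1}^b$ and $L_i^aL_{i+1}^bT_i$, and two successive applications of Lemma~\ref{H Lemma L T}(iv),(v) give $T_iL_i^aL_{i+1}^b=L_i^bL_{i+1}^aT_i+(\text{an }R\text{-combination of }L_i^cL_{i+1}^d)$, and similarly on the other side. The crucial point is that $s_i\ne s_k$, so only $L_i,L_{i+1}$ --- both lying in $\{L_{k+1},\dots,L_n\}$ --- are involved, and no $L_k$ is introduced.

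For the inductive step I would write $x=x's_i$ with $\ell(x')=\ell(x)-1$ and $s_i\in S_{(k,n-k)}$, so that $T_x=T_{x'}T_i$. Put $M=L_{k+1}^{a_{k+1}}\cdots L_n^{a_n}$. By the auxiliary fact, $T_iM=M'T_i+P$ with $M'$ the monomial obtained from $M$ by swapping the exponents of $L_i$ and $L_{i+1}$ and $P$ an $R$-combination of monomials in $L_{k+1},\dots,L_n$; hence $T_xM=T_{x'}M'\,T_i+T_{x'}P$. Applying the induction hypothesis to $T_{x'}M'$, writing $M'=\prod_{j=k+1}^{n}L_j^{a_{s_i(j)}}$ and using $(x's_i)^{-1}=s_i(x')^{-1}$, one checks that the leading monomial $\prod_{j}L_{x'(j)}^{a_{s_i(j)}}$ equals $\prod_{j=k+1}^{n}L_{x(j)}^{a_j}=L_{x(k+1)}^{a_{k+1}}\cdots L_{x(n)}^{a_n}$ (the factors commute by Lemma~\ref{H Lemma L T}(i)). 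Therefore
\[
T_xM=L_{x(k+1)}^{a_{k+1}}\cdots L_{x(n)}^{a_n}\,T_x+\Bigl(\sum_{y<x'}T_yN_y\Bigr)T_i+T_{x'}P,
\]
the $N_y$ being $R$-combinations of monomials in $L_{k+1},\dots,L_n$ supplied by the induction hypothesis. The first summand is the desired leading term; in the last summand $T_{x'}P$ already has the form $T_{x'}\cdot(\text{monomials})$ with $x'<x$. In the middle summand I would push $T_i$ to the left past each $N_y$ using the auxiliary fact and then expand $T_yT_i$ by the multiplication rule of the Iwahori--Hecke algebra $\sH(\fS_n)$, namely $T_yT_i=T_{ys_i}$ or $qT_{ys_i}+(q-1)T_y$; since $y<x'$ and $\ell(x's_i)=\ell(x')+1$, standard properties of the Bruhat order give $ys_i<x$ and $y<x$, so this summand becomes $\sum_{z<x}T_z\cdot(\text{$R$-combination of monomials in }L_{k+1},\dots,L_n)$. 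Collecting the three pieces completes the induction.

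It remains to reduce the exponents of the monomials into $[0,r-1]$ so as to match the statement verbatim. For this I would use that the $L_j$ pairwise commute (Lemma~\ref{H Lemma L T}(i)) and that each $L_j$ satisfies the monic degree-$r$ relation $\prod_{h=1}^{r}(L_j-Q_h)=0$ over $R$ (for $L_1=T_0$ this is the first relation in \eqref{H relations}, and in general it is standard; cf.\ \cite{AK}), so that the $R$-subalgebra generated by $L_{k+1},\dots,L_n$ is $R$-spanned by the monomials $L_{k+1}^{b_{k+1}}\cdots L_n^{b_n}$ with all $b_j\in[0,r-1]$; rewriting each correction term in this spanning set produces the scalars $r_y^{(b_{k+1},\dots,b_n)}$.

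The step I expect to be the main obstacle is not a single computation but the bookkeeping needed to keep two invariants alive through the induction: (a) that the leading term remains \emph{exactly} $L_{x(k+1)}^{a_{k+1}}\cdots L_{x(n)}^{a_n}T_x$, which forces careful tracking of how the exponents permute via $(x's_i)^{-1}=s_i(x')^{-1}$; and (b) that every correction term stays of the form $T_y\cdot(\text{monomial in }L_{k+1},\dots,L_n)$ with $y<x$, for which the decisive points are that $s_i\ne s_k$ (so conjugating monomials in $L_{k+1},\dots,L_n$ by $T_i$ never introduces $L_k$) together with the standard behaviour of $T_iT_y$ and $T_yT_i$ with respect to the Bruhat order.
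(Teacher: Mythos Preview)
Your argument is correct and follows essentially the same route as the paper: induction on $\ell(x)$, peeling off a rightmost simple reflection $s_i\in S_{(k,n-k)}$, commuting $T_i$ past the $L$-monomial via Lemma~\ref{H Lemma L T}, and then invoking the inductive hypothesis on $T_{x'}$. You are in fact more explicit than the paper on two points it leaves to the reader --- the Bruhat-order bookkeeping for the middle term $(\sum_{y<x'}T_yN_y)T_i$, and the two-sided commutation ``auxiliary fact'' --- while your final exponent-reduction step is unnecessary: a direct computation as in the paper's three-case split (equivalently, using $T_i(L_iL_{i+1})^a=(L_iL_{i+1})^aT_i$ to reduce to a single power before applying (iv) or (v)) shows all correction exponents remain in $[0,r-1]$.
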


\begin{proof} 
We prove the lemma by the induction on $\ell(x)$. 
If $\ell(x)=0$, it is clear. 
Suppose that $\ell(x) >0$. 
Let $x = s_{i_1} s_{i_2} \dots s_{i_l}$ be a reduced expression, 
and put $x'= x s_{i_l}$. 
We have $T_x = T_{x'} T_{i_l}$, and we see that 
\begin{align*}
&T_x (L_{k+1}^{a_{k+1}} L_{k+2}^{a_{k+2}} \dots L_n^{a_n}) 
\\
&= \begin{cases} 
	T_{x'} (L_{s_{i_l}(k+1)}^{a_{k+1}} L_{s_{i_l}(k+2)}^{a_{k+2}} \dots L_{s_{i_l}(n)}^{a_n}) T_{i_l} 
	 & \text{ if } a_{i_l} = a_{i_l+1}, 
	\\
	T_{x'} (L_{s_{i_l}(k+1)}^{a_{k+1}} L_{s_{i_l}(k+2)}^{a_{k+2}} \dots L_{s_{i_l}(n)}^{a_n}) T_{i_l} 
		\\ \quad 
		+ (q-1) \sum_{c=a_{i_l}}^{a_{i_l+1}-1} T_{x'} 
		(L_{k+1}^{a_{k+1}} \dots L_{i_l -1}^{a_{i_l -1}}) (L_{i_l}^{c} L_{i_l+1}^{a_{i_l} + a_{i_{l+1}}-c}) 
		(L_{i_l+2}^{a_{i_l+2}} \dots L_n^{a_n})
		\hspace{-5em}
		\\ 
		& \text{ if } a_{i_l} < a_{i_l +1}, 
	\\
	T_{x'} (L_{s_{i_l}(k+1)}^{a_{k+1}} L_{s_{i_l}(k+2)}^{a_{k+2}} \dots L_{s_{i_l}(n)}^{a_n}) T_{i_l} 
		\\ \quad 
		- (q-1) \sum_{c=a_{i_l+1}}^{a_{i_l}-1} T_{x'} 
		(L_{k+1}^{a_{k+1}} \dots L_{i_l -1}^{a_{i_l -1}}) (L_{i_l}^{c} L_{i_l+1}^{a_{i_l} + a_{i_{l+1}}-c}) 
		(L_{i_l+2}^{a_{i_l+2}} \dots L_n^{a_n})
		\hspace{-5em}
		\\ 
		& \text{ if } a_{i_l} > a_{i_l +1} 
	\end{cases} 
\end{align*}
by direct calculation using Lemma \ref{H Lemma L T}. 
Applying the assumption of the induction to 
$T_{x'} (L_{s_{i_l}(k+1)}^{a_{k+1}} L_{s_{i_l}(k+2)}^{a_{k+2}} \dots L_{s_{i_l}(n)}^{a_n})$, 
we have the lemma. 
\end{proof}

\begin{prop}
\label{H Prop Hnr Hlmu}
For $W_{(l,\mu)}$, a parabolic subgroup of $W_{n,r}$, 
the elements
\[
 \{T_{w_1} T_{w_2} \mid w_1 \in W^{(l,\mu)}, \, w_2 \in W_{(l,\mu)}\}
\]
is an $R$-free basis of $\sH_{n,r}$. 
Moreover 
$\sH_{n,r}$ 
is a free right $\sH_{(l,\mu)}$-module with an $\sH_{(l,\mu)}$-free basis 
$\{T_w \mid w \in W^{(l,\mu)} \}$. 
\end{prop}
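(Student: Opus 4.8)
The plan is to deduce the statement from the coset decomposition of $W_{n,r}$ established in Lemma \ref{W Lemma coset rep} together with the triangularity result of Lemma \ref{H Lemma Tx L}. First I would fix a set of coset representatives: by Lemma \ref{W Lemma coset rep}, $W^{(l,\mu)}$ is a complete set of representatives of $W_{n,r}/W_{(l,\mu)}$, so every $w \in W_{n,r}$ factors uniquely as $w = w_1 w_2$ with $w_1 \in W^{(l,\mu)}$ and $w_2 \in W_{(l,\mu)}$. Since $|W^{(l,\mu)}| \cdot |W_{(l,\mu)}| = |W_{n,r}|$, the set $\{T_{w_1} T_{w_2} \mid w_1 \in W^{(l,\mu)},\, w_2 \in W_{(l,\mu)}\}$ has exactly the right cardinality $|W_{n,r}| = \rank_R \sH_{n,r}$. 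So it suffices to prove that these elements span $\sH_{n,r}$ over $R$; freeness then follows automatically from the cardinality count, since a spanning set of size equal to the rank of a free module of finite rank over a commutative ring is a basis.

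Next I would set up the spanning argument. It is enough to show that each basis element $T_w = T_x L_1^{a_1} \cdots L_n^{a_n}$ (for $w = x t_1^{a_1}\cdots t_n^{a_n}$) lies in the $R$-span of $\{T_{w_1}T_{w_2}\}$. Write $x = x_1 x_2$ with $x_1 \in \fS^{(l,\mu)}$, $x_2 \in \fS_{(l,\mu)}$, so that $\ell(x) = \ell(x_1) + \ell(x_2)$ and hence $T_x = T_{x_1} T_{x_2}$. The subtlety is that the $L$-part of $w$ does not split along this factorization in the obvious way; one must move the $L_i$ for $i \in [l+1,n]$ to the left past $T_{x_2}$ and keep the $L_i$ for $i \in [1,l]$ on the right. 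This is precisely what the proof of Lemma \ref{W Lemma coset rep} does at the group level via the relations \eqref{W-rel}, and what Lemma \ref{H Lemma Tx L} does at the Hecke-algebra level up to lower-order terms. Concretely, I would split the $L$-monomial as $(L_{l+1}^{a_{l+1}}\cdots L_n^{a_n})$ times $(L_1^{a_1}\cdots L_l^{a_l})$ (they commute by Lemma \ref{H Lemma L T}(i)), commute the first factor leftward past $T_{x_2}$ using Lemma \ref{H Lemma Tx L} applied to $T_{x_2}$ acting on $L_{l+1}^{a_{l+1}}\cdots L_n^{a_n}$ (after reducing to the form needed there), and collect everything: the leading term becomes $T_{x_1}\cdot\bigl(L_{x_2(l+1)}^{a_{l+1}}\cdots L_{x_2(n)}^{a_n} T_{x_2}\bigr)\cdot L_1^{a_1}\cdots L_l^{a_l}$, which, after identifying $x_1 L_{x_2(l+1)}^{a_{l+1}}\cdots$ as (the $T$ of) an element of $W^{(l,\mu)}$ via the computation in Lemma \ref{W Lemma coset rep}, is of the desired form $T_{w_1}T_{w_2}$; the error terms produced by Lemma \ref{H Lemma Tx L} involve $T_y$ with $y < x_2$ and $L$-monomials supported on $[l+1,n]$, hence correspond to strictly shorter elements, so an induction on $\ell(x)$ (or on $\ell(x_2)$) closes the argument.

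There is a bookkeeping point worth flagging: Lemma \ref{H Lemma Tx L} as stated requires $x$ to lie in the one-block parabolic $\fS_{(k,n-k)}$ with $k$ adjusted so that $x_2$ permutes only $[l+1,n]$; since $x_2 \in \fS_{(l,\mu)}$ it does fix $[1,l]$ pointwise, so with $k = l$ the hypothesis is met, and the resulting $L$-indices $x_2(l+1),\dots,x_2(n)$ all lie in $[l+1,n]$, which is exactly what is needed to recognize the leading term as $T_{w_1}$ with $w_1 \in W^{(l,\mu)}$. Once spanning is established, the second assertion is immediate: $\{T_{w_1}\mid w_1 \in W^{(l,\mu)}\}$ spans $\sH_{n,r}$ as a right $\sH_{(l,\mu)}$-module because $\{T_{w_2}\mid w_2 \in W_{(l,\mu)}\}$ is an $R$-basis of $\sH_{(l,\mu)}$, and it is $\sH_{(l,\mu)}$-linearly independent since the $\{T_{w_1}T_{w_2}\}$ are $R$-linearly independent. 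The main obstacle I anticipate is not conceptual but organizational: carefully tracking that every correction term from Lemma \ref{H Lemma Tx L} strictly decreases the relevant length statistic so that the induction is well-founded, and verifying that the leading term genuinely has the form $T_{w_1}T_{w_2}$ on the nose (i.e. that $\ell$ is additive in the relevant factorizations so no further straightening of the $T$-factors is needed).
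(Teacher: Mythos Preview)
Your proof is correct and follows essentially the same route as the paper. One small slip worth flagging: your claim that $x_2 \in \fS_{(l,\mu)}$ ``does fix $[1,l]$ pointwise'' is false, since $\fS_{(l,\mu)} \cong \fS_l \times \fS_\mu^{[l]}$ contains the full $\fS_l$; what you actually need (and what holds) is simply $\fS_{(l,\mu)} \subset \fS_{(l,n-l)}$, which is precisely the hypothesis of Lemma~\ref{H Lemma Tx L} with $k=l$. The paper's execution differs only cosmetically: it further splits $x_2 = y_1 y_2$ with $y_1 \in \fS_l$ and $y_2 \in \fS_\mu^{[l]}$ (so $T_{y_1}$ commutes with $L_{l+1},\dots,L_n$ exactly), applies Lemma~\ref{H Lemma Tx L} only to $T_{y_2}$, and thereby obtains the error terms already in the shape $T_{w_1'}T_{w_2'}$ with $w_2'$ strictly smaller in the Bruhat-on-$\fS_{(l,\mu)}$-part order. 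That yields a one-shot unitriangularity argument in place of your induction on $\ell(x)$, but the content is the same. For your induction to close cleanly you also need the error elements $y$ from Lemma~\ref{H Lemma Tx L} to lie in $\fS_{(l,\mu)}$ (so that $T_{x_1}T_y = T_{x_1 y}$ is a genuine Ariki--Koike basis element); this is true since the $y$'s are subwords of a reduced expression of $x_2$, but it comes from the proof of that lemma rather than its statement.
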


\begin{proof} 
For $w = x t_1^{a_1} \dots t_n^{a_n} \in W_{n,r}$ ($x \in \fS_n$), 
we can write 
$x = x_1 x_2$ ($x_1 \in \fS^{(l,\mu)}$, $x_2 \in \fS_{(l,\mu)}$), 
and 
$x_2 = y_1 y_2$ ($y_1 \in \fS_l$, $y_2 \in \fS_\mu^{[l]}$). 
Note that $\ell(x) = \ell(x_1) + \ell(x_2)$ 
and $\ell(x_2) = \ell (y_1) + \ell (y_2)$, 
we have 
\begin{align*}
T_w 
&= T_x L_1^{a_1} \dots L_n^{a_n} 
\\
&= T_{x_1} T_{x_2} L_1^{a_1} \dots L_n^{a_n} 
\\
&= T_{x_1} T_{y_1} T_{y_2} L_1^{a_1} \dots L_n^{a_n} 
\\
&= T_{x_1} T_{y_2} (L_{l+1}^{a_{l+1}} L_{l+2}^{a_{l+2}} \dots L_n^{a_n}) T_{y_1} (L_1^{a_1} L_2^{a_2} \dots L_l^{a_l}),
\end{align*}
where we use Lemma \ref{H Lemma L T} (\roi) and (\roii) in the last equation. 
Note that $y_2 \in \fS_{\mu}^{[l]}$, 
we see that 
\begin{align*}
&T_{y_2} (L_{l+1}^{a_{l+1}} L_{l+2}^{a_{l+2}} \dots L_n^{a_n})
\\
&= L_{y_2(l+1)}^{a_{l+1}} L_{y_2(l+2)}^{a_{l+2}} \dots L_{y_2 (n)}^{a_n} T_{y_2} 
 + \sum_{z < y_2 } \sum_{(b_{l+1}, \dots, b_n) \in
{[0,r-1]}^{n-l}}
 r_z^{(b_{l+1}, \dots, b_n)}   
	L_{l+1}^{b_{l+1}} \dots L_n^{b_n} T_z
\end{align*}
by using Lemma \ref{H Lemma Tx L} repeatedly. 
Thus we have 
\begin{align}
\label{W Tw coset decom}
\begin{split}
T_w 
&= T_{x_1} L_{y_2(l+1)}^{a_{l+1}} L_{y_2(l+2)}^{a_{l+2}} \dots L_{y_2 (n)}^{a_n} 
	T_{y_2} T_{y_1} (L_1^{a_1} L_2^{a_2} \dots L_l^{a_l})
	\\ & \quad 
 +\sum_{z < y_2 } \sum_{(b_{l+1}, \dots, b_n) \in
 {[0,r-1]}^{n-l}}
		r_z^{(b_{l+1}, \dots, b_n)} 
	T_{x_1} L_{l+1}^{b_{l+1}} \dots L_n^{b_n} T_z T_{y_1} (L_1^{a_1} L_2^{a_2} \dots L_l^{a_l}) 
\\
&=  (T_{x_1} L_{y_2(l+1)}^{a_{l+1}} L_{y_2(l+2)}^{a_{l+2}} \dots L_{y_2 (n)}^{a_n})
	(T_{x_2}  L_1^{a_1} L_2^{a_2} \dots L_l^{a_l})
	\\ & \quad 
 +\sum_{z < y_2 } \sum_{(b_{l+1}, \dots, b_n) \in
 {[0,r-1]}^{n-l}}
	 r_z^{(b_{l+1}, \dots, b_n)} 
	(T_{x_1} L_{l+1}^{b_{l+1}} \dots L_n^{b_n}) (T_{y_1 z} L_1^{a_1} L_2^{a_2} \dots L_l^{a_l}), 
\end{split}
\end{align}
where we note that $y_1 z  < x_2 = y_1 y_2$.  

We define a partial order $\geq$ on $W_{(l,\mu)}$ by 
$w = x t_1^{a_1} \dots t_l^{a_l} \geq w' = x' t_1^{a'_1} \dots t_l^{a'_l}$ 
if $x > x'$.   
Then we have 
\begin{align*}
T_w = T_{w_1} T_{w_2} + \sum_{w'_1 \in W^{(l,\mu)}, w'_2 \in W_{(l,\mu)} \atop w'_2 < w_2} 
	r_{w'_1, w'_2} T_{w'_1} T_{w'_2}
\end{align*}
by the equations \eqref{W Tw coset decom}, 
where $w_1 = x_1 t_{y_2(l+1)}^{a_l+1} t_{y_2(l+2)}^{a_l+2} \dots t_{y_2(n)}^{a_n}$ 
and $w_2 = x_2 t_1^{a_1} \dots t_l^{a_l}$. 

This implies that 
$\{ T_{w_1} T_{w_2} \mid w_1 \in W^{(l,\mu)}, \, w_2 \in W_{(l,\mu)}\}$ 
is an $R$-free basis of $\sH_{n,r}$, 
and we have $\sH_{n,r} = \bigoplus_{w \in W^{(l,\mu)}} T_w \sH_{(l,\mu)}$ 
as right $\sH_{(l,\mu)}$-modules.  
\end{proof}


\para 
Recall that 
$W_{(k(u), \pi(u))} = W_{(l,\mu)} \cap u W_{(m,\nu)} u^{-1}$ 
for $u = x \prod_{i \in I(x)} t_i^{a_i} \in \,^{(l,\mu)}W^{(m,\nu)}$, 
and we have 
\begin{align*}
W_{(k(u), \pi(u))} = \{ z t_1^{a_1} t_2^{a_2} \dots t_{k(u)}^{a_{k(u)}} \mid z \in \fS_{(k(u),\pi(u))}, \, 
a_1,\dots, a_{k(u)} \in [0,r-1] \}. 
\end{align*}
Then we see that 
$\sH_{(k(u), \pi(u))}$
has an $R$-free basis 
\begin{align}
\label{H basis Hkupiu}
\{T_z L_1^{a_1} L_2^{a_2} \dots L_{k(u)}^{a_{k(u)}} \mid 
z \in \fS_{(k(u),\pi(u))}, \,  a_1,\dots, a_{k(u)} \in [0,r-1] \}.
\end{align}


\begin{prop}
\label{H Prop TuHmnu}
For each $u = x \prod_{i \in I(x)} t_i^{a_i} \in \,^{(l,\mu)}W^{(m,\nu)}$, 
we have the following: 
\begin{enumerate} 
\item 
$L_i T_u =T_u L_i$ for $i=1,2,\dots, k(u)$.  

\item 
$T_z  T_u= T_u T_{ x^{-1} z x}$ for $z \in \fS_{(k(u), \pi(u))}$. 
\end{enumerate}
In particular, 
$T_u \sH_{(m,\nu)}$ has an 
$(\sH_{(k(u), \pi(u))}, \sH_{(m,\nu)})$-bimodule structure by multiplications in $\sH_{n,r}$. 
More precisely, for $T_u Y \in T_u \sH_{(m,\nu)}$, we have 
\begin{align*}
L_i (T_u Y) = T_u (L_i Y) \quad (1 \leq i \leq k(u)), 
\quad 
T_z (T_u Y) = T_u (T_{x^{-1} z x} Y) \quad (z \in \fS_{(k(u), \pi(u))}). 
\end{align*}
\end{prop}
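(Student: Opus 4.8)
The plan is to establish parts (i) and (ii) by direct computation in $\sH_{n,r}$, using the explicit description of $T_u$ coming from the expression $u = x \prod_{i \in I(x)} t_i^{a_i}$ and the fact, recorded in Proposition~\ref{W prop only braid}, that the conjugation identities in $W_{n,r}$ relating $W_{(k(u),\pi(u))}$ and $W_{(k(u),\pi^\sharp(u))}$ follow only from braid relations. Write $x = s_{i_1} s_{i_2} \cdots s_{i_l}$ for a reduced expression, so that $T_u = T_{i_1} T_{i_2} \cdots T_{i_l} \, L_{k(u)+1}^{a_{k(u)+1}} \cdots L_n^{a_n}$ by definition (recall $a_i = 0$ for $i \le k(u)$ by \eqref{W a1=aku=0}).

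For part (i), I would first show $L_i$ commutes with each $L_j$-factor appearing in $T_u$: since $j > k(u) \ge i$ in all those factors, this is Lemma~\ref{H Lemma L T}~(\roi). Then I must show $L_i$ commutes with $T_x = T_{i_1} \cdots T_{i_l}$. Here the key input is that $x \in \fS_{[k(u)+1,n]}$ by Lemma~\ref{S Lemma}, so every $i_p$ satisfies $i_p \ge k(u)+1$, hence $i_p \ne i-1, i$ — wait, more precisely $i_p \ge k(u)+1 \ge i+1$, so in particular $i_p \ne i, i+1$ never fails; actually we need $i \ne i_p, i_p+1$, i.e. $i_p \ne i-1, i$, and since $i \le k(u) \le i_p - 1$ this holds. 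Thus $T_i$ (equivalently $L_i$ via its defining word in $T_0, \dots, T_{i-1}$, all of whose indices are $\le i-1 \le k(u)-1 < i_p$) commutes with each $T_{i_p}$ by Lemma~\ref{H Lemma L T}~(\roii), applied also with the remark that $L_i$ is a word in $T_0, T_1, \dots, T_{i-1}$ which commutes with $T_{i_p}$ for $i_p \ge i+1$. Chaining these gives $L_i T_u = T_u L_i$.

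For part (ii), take $z \in \fS_{(k(u),\pi(u))}$; I want $T_z T_u = T_u T_{x^{-1} z x}$. It suffices to treat $z$ a generator $s_{j'} \in X_{(k(u),\pi(u))} \cap S$ and then compose. If $j' \le k(u) - 1$, then $s_{j'} = x s_{j'} x^{-1}$ (by \eqref{W sj}), so $x^{-1} z x = s_{j'}$, and $T_{j'}$ commutes with $T_x$ exactly as in part (i) since $j' \le k(u)-1 < i_p$ for all $p$; thus the identity reduces to $T_{j'} T_x = T_x T_{j'}$, which is immediate. If $j'$ corresponds to a generator with $s_{j'} = x s_{\psi(j')} x^{-1}$ and $a_{\psi(j')} = a_{\psi(j')+1}$ (the remaining case in \eqref{W u-1 Gu u}), then by Proposition~\ref{W prop only braid} the group-level identity $s_{j'} x = x s_{\psi(j')}$ follows from the braid relations of $\fS_n$, and moreover $\ell(s_{j'} x) = \ell(x) + 1 = \ell(x s_{\psi(j')})$; hence in $\sH(\fS_n)$ we get $T_{j'} T_x = T_x T_{\psi(j')}$ by the standard fact that braid-relation-only rewritings of reduced words lift to the Hecke algebra. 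It then remains to move $T_{\psi(j')}$ past the monomial $L_{k(u)+1}^{a_{k(u)+1}} \cdots L_n^{a_n}$: by Lemma~\ref{H Lemma L T}~(\roiii) the factor $T_{\psi(j')}$ commutes with $L_{\psi(j')} L_{\psi(j')+1}$ and hence, since $a_{\psi(j')} = a_{\psi(j')+1}$, with the product $L_{\psi(j')}^{a_{\psi(j')}} L_{\psi(j')+1}^{a_{\psi(j')+1}}$, and it commutes with the remaining $L$-factors by Lemma~\ref{H Lemma L T}~(\roii). This yields $T_{j'} T_u = T_u T_{\psi(j')} = T_u T_{x^{-1} s_{j'} x}$.

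Finally, composing the generator-wise identities along a reduced expression of an arbitrary $z \in \fS_{(k(u),\pi(u))}$ — using $\fS_{(k(u),\pi(u))} \subset \fS_{\tau(x)}$ so that $x^{-1} \fS_{(k(u),\pi(u))} x = \fS_{(k(u),\pi^\sharp(u))} \subset \fS_{(m,\nu)}$ by \eqref{W S ku pi sharp u} and the length-additivity of conjugation by $x \in {}^{(l,\mu)}\fS^{(m,\nu)}$ — gives $T_z T_u = T_u T_{x^{-1} z x}$ with $T_{x^{-1} z x} \in \sH_{(m,\nu)}$. Parts (i) and (ii) together show that left multiplication by $\sH_{(k(u),\pi(u))}$ preserves $T_u \sH_{(m,\nu)}$ and is given by the stated formulas, so $T_u \sH_{(m,\nu)}$ is an $(\sH_{(k(u),\pi(u))}, \sH_{(m,\nu)})$-bimodule. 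The main obstacle is the middle case of part (ii): making precise that the braid-only rewriting of $s_{j'} x = x s_{\psi(j')}$ lifts to $\sH(\fS_n)$ and that the subsequent commutation past the $L$-monomial is clean; everything hinges on the condition $a_{\psi(j')} = a_{\psi(j')+1}$, which is precisely what was built into the definition \eqref{W def vG(u)} of $\vG(u)$ and hence of $\pi(u)$.
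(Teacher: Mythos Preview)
Your proof is correct and takes essentially the same approach as the paper's, just unpacked in detail: the paper's proof is a one-line appeal to Proposition~\ref{W prop only braid} (together with \eqref{W u-1 sj u=x-1 sj x} and \eqref{W u-1 ti u}), observing that since the identities $s_j u = u s_{\psi(j)}$ for $s_j \in X_{(k(u),\pi(u))}$ follow only from braid relations in $W_{n,r}$, they lift verbatim to the Hecke algebra by the definition of $T_u$; you carry out this lifting explicitly using Lemma~\ref{H Lemma L T}. One small omission: in your Case~1 of part~(ii) you should also note that $T_{j'}$ commutes with the $L$-monomial (all $L$-indices exceed $k(u) \ge j'+1$, so Lemma~\ref{H Lemma L T}~(\roii) applies), not only with $T_x$ --- but this is implicit in your ``exactly as in part~(i)''.
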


\begin{proof} 
Note the definition of the element $T_w \in \sH_{n,r}$ for $w \in W_{n,r}$, 
this proposition follows from Proposition \ref{W prop only braid} 
together with \eqref{W u-1 sj u=x-1 sj x} and \eqref{W u-1 ti u}.
\end{proof}


\para 
For $u = x \prod_{i \in I(x)} t_i^{a_i} \in \,^{(l,\mu)} W^{(m,\nu)}$, 
recall that $W_{(k(u), \pi(u))} = W_{(l,\mu)} \cap u W_{(m,\nu)} u^{-1}$ and  
$W_{(k(u), \pi^{\sharp}(u))} = u^{-1} W_{(k(u), \pi(u))} u$ (see \eqref{W u-1 Wkupiu u}) 
are parabolic subgroup of $W_{n,r}$. 
Then we see that 
$\sH_{(k(u), \pi(u))}$ (resp. $\sH_{(k(u), \pi^{\sharp}(u))}$) 
has an $R$-free basis 
\begin{align*} 
&\{T_z L_1^{a_1} \dots L_{k(u)}^{a_{(k(u)}} \mid z \in \fS_{(k(u), \pi(u))}, \, a_1, \dots a_{k(u)} \in [0, z-1]\}
\\
&(\text{resp. } 
	\{T_{y} L_1^{a_1} \dots L_{k(u)}^{a_{k(u)}} \mid y \in \fS_{(k(u), \pi^{\sharp}(u))}, \, a_1, \dots a_{k(u)} \in [0, z-1]\}), 
\end{align*}
where we note that $\fS_{(k(u), \pi^{\sharp}(u))} = x^{-1} \fS_{(k(u), \pi(u))} x$ by \eqref{W S ku pi sharp u}. 
We have the following corollary. 

\begin{cor} 
\label{H Cor Tu func}
For $u = x \prod_{i \in I(x)} t_i^{a_i} \in \,^{(l,\mu)} W^{(m,\nu)}$, we have the following: 
\begin{enumerate} 
\item 
$T_u \sH_{(k(u), \pi^{\sharp}(u))}$ has an $(\sH_{(k(u), \pi(u))}, \sH_{(k(u), \pi^{\sharp}(u))})$-bimodule structure 
by multiplications in $\sH_{n,r}$. 

\item 
We have the isomorphism of $(\sH_{(k(u), \pi(u))}, \sH_{(m,\nu)})$-bimodules 
\begin{align*}
T_u \sH_{(m,\nu)} \cong T_u \sH_{(k(u), \pi^{\sharp}(u))} 
\otimes_{\sH_{(k(u), \pi^{\sharp}(u))} } \sH_{(m,\nu)}.
\end{align*}

\end{enumerate}
\end{cor}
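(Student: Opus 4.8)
The plan is to read off part~(i) from Proposition~\ref{H Prop TuHmnu}, and to deduce part~(ii) from Proposition~\ref{H Prop Hnr Hlmu} by a free-basis argument; no new idea is needed, only some bookkeeping.

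For part~(i), the subspace $T_u\sH_{(k(u),\pi^{\sharp}(u))}$ of $\sH_{n,r}$ is by construction stable under right multiplication by $\sH_{(k(u),\pi^{\sharp}(u))}$, and since both actions are restrictions of multiplication in $\sH_{n,r}$ they will automatically commute; so it suffices to prove that left multiplication by $\sH_{(k(u),\pi(u))}$ preserves $T_u\sH_{(k(u),\pi^{\sharp}(u))}$. Because the set of elements of $\sH_{n,r}$ that preserve this subspace under left multiplication is a subalgebra, it is enough to check stability under a set of algebra generators of $\sH_{(k(u),\pi(u))}$, for instance $L_1=T_0$ (present when $k(u)\ge 1$) together with the $T_z$, $z\in\fS_{(k(u),\pi(u))}$. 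By Proposition~\ref{H Prop TuHmnu} one has $L_iT_u=T_uL_i$ for $1\le i\le k(u)$ and $T_zT_u=T_uT_{x^{-1}zx}$ for $z\in\fS_{(k(u),\pi(u))}$. Now $x^{-1}zx\in\fS_{(k(u),\pi^{\sharp}(u))}$ by \eqref{W S ku pi sharp u}, and $s_0,s_1,\dots,s_{k(u)-1}$ all lie in $u^{-1}\vG(u)u\cap S=S_{(k(u),\pi^{\sharp}(u))}$ by \eqref{W u-1 Gu u} and \eqref{W u-1 vG(u) u}, so $\sH_{(k(u),\pi^{\sharp}(u))}$ contains $T_0,T_1,\dots,T_{k(u)-1}$ and hence $L_1,\dots,L_{k(u)}$. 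Therefore, for $h\in\sH_{(k(u),\pi^{\sharp}(u))}$ both $L_i(T_uh)=T_u(L_ih)$ and $T_z(T_uh)=T_u(T_{x^{-1}zx}h)$ lie in $T_u\sH_{(k(u),\pi^{\sharp}(u))}$, which gives the asserted $(\sH_{(k(u),\pi(u))},\sH_{(k(u),\pi^{\sharp}(u))})$-bimodule structure.

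For part~(ii), the natural candidate is the multiplication map $\nu\colon T_u\sH_{(k(u),\pi^{\sharp}(u))}\otimes_{\sH_{(k(u),\pi^{\sharp}(u))}}\sH_{(m,\nu)}\to T_u\sH_{(m,\nu)}$ sending $T_uh\otimes h'$ to $T_uhh'$. It is well defined (being $\sH_{(k(u),\pi^{\sharp}(u))}$-balanced), surjective, and, because it is induced by multiplication in $\sH_{n,r}$ and the left factor carries the action of part~(i), it is a homomorphism of $(\sH_{(k(u),\pi(u))},\sH_{(m,\nu)})$-bimodules; so the only issue is injectivity. The key observation is that $u\in{}^{(l,\mu)}W^{(m,\nu)}\subseteq W^{(m,\nu)}$, so by Proposition~\ref{H Prop Hnr Hlmu} (with $W_{(l,\mu)}$ replaced by $W_{(m,\nu)}$) the element $T_u$ belongs to the $\sH_{(m,\nu)}$-free basis $\{T_w\mid w\in W^{(m,\nu)}\}$ of $\sH_{n,r}$. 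In particular left multiplication by $T_u$ is injective on $\sH_{(m,\nu)}$ and $T_u\sH_{(m,\nu)}$ is free of rank one over $\sH_{(m,\nu)}$ with basis $T_u$; restricting that injectivity to the subalgebra $\sH_{(k(u),\pi^{\sharp}(u))}$ shows that $T_u\sH_{(k(u),\pi^{\sharp}(u))}$ is free of rank one over $\sH_{(k(u),\pi^{\sharp}(u))}$ with basis $T_u$, so the source of $\nu$ is free of rank one over $\sH_{(m,\nu)}$ with basis $T_u\otimes 1$. Since $\nu(T_u\otimes 1)=T_u$, $\nu$ carries a basis to a basis, hence is bijective, and therefore an isomorphism of $(\sH_{(k(u),\pi(u))},\sH_{(m,\nu)})$-bimodules.

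I expect the points requiring care to be modest: in part~(i), checking that $T_{x^{-1}zx}$ and the $L_i$ genuinely lie in $\sH_{(k(u),\pi^{\sharp}(u))}$, which is exactly what \eqref{W S ku pi sharp u}, \eqref{W u-1 Gu u} and \eqref{W u-1 vG(u) u} provide; and in part~(ii), noticing that one does \emph{not} need $T_u$ to be invertible in $\sH_{n,r}$ (it need not be, since the $Q_j$ are not assumed to be units) -- injectivity of left multiplication by $T_u$ already follows from $T_u$ being a member of an $\sH_{(m,\nu)}$-basis of $\sH_{n,r}$. All of this works over an arbitrary commutative ring $R$ in which $q$ is invertible, in keeping with the setup of \ref{H Def}.
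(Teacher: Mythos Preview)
Your argument is correct and follows essentially the same route as the paper's proof: part~(i) is read off from Proposition~\ref{H Prop TuHmnu} together with $\fS_{(k(u),\pi^{\sharp}(u))}=x^{-1}\fS_{(k(u),\pi(u))}x$, and part~(ii) is deduced from $u\in W^{(m,\nu)}$ and Proposition~\ref{H Prop Hnr Hlmu}, which makes $T_u\sH_{(k(u),\pi^{\sharp}(u))}\cong\sH_{(k(u),\pi^{\sharp}(u))}$ as right modules. One cosmetic slip: $s_0\notin S$, so you should not write $s_0\in u^{-1}\vG(u)u\cap S$; the point you need---that $L_1,\dots,L_{k(u)}\in\sH_{(k(u),\pi^{\sharp}(u))}$---follows directly from the definition of $\sH_{(k(u),\pi^{\sharp}(u))}$ in \ref{H Def} (it contains $T_0,\dots,T_{k(u)-1}$ since $k(u)\ge1$ forces $s_0\in X_{(k(u),\pi^{\sharp}(u))}$).
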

 
\begin{proof} 
(\roi) follows from Proposition \ref{H Prop TuHmnu} 
(note that $\fS_{(k(u), \pi^{\sharp}(u))} = x^{-1} \fS_{(k(u), \pi(u))} x$). 
Note that $ u \in \,^{(l, \mu)} W ^{(m,\nu)} \subset W^{(m,\nu)}$, 
and $\sH_{(k(u), \pi^{\sharp}(u))}$ is a subalgebra of $\sH_{(m,\nu)}$.
Then,  
by Proposition \ref{H Prop Hnr Hlmu}, 
we see that 
$T_u \sH_{(k(u), \pi^{\sharp}(u))} \cong \sH_{(k(u), \pi^{\sharp}(u))}$ 
as right $\sH_{(k(u), \pi^{\sharp}(u))}$-modules, 
and we have (\roii). 
\end{proof}

\para 
By Corollary \ref{W Cor}, 
any element $w \in W_{n,r}$ is uniquely written as 
\begin{align}
w = w_1 u w_2  
\quad (u \in \, ^{(l,\mu)} W^{(m,\nu)}, \, 
w_1 \in (W_{(l,\mu)})^{(k(u), \pi(u))}, \, 
w_2 \in W_{(m.\nu)}). 
\end{align}
By using this decomposition, 
we define $\wt{T}_w \in \sH_{n,r}$ by 
$\wt{T}_w = T_{w_1} T_u T_{w_2}$. 
Then we have the following proposition. 

\begin{prop}  
\label{H H decom}
We have the following. 
\begin{enumerate} 
\item 
$\displaystyle \sH_{n,r} = \sum_{u \in \,^{(l,\mu)} W^{(m,\nu)}} \sH_{(l,\mu)} T_u \sH_{(m,\nu)}$. 

\item 
$\{\wt{T}_w \mid w \in W_{n,r}\}$ is an $R$-free basis of $\sH_{n,r}$. 
\end{enumerate} 
\end{prop}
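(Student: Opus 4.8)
The plan is to reduce everything to~(i), and to prove~(i) by a change-of-basis computation. First, a structural observation makes the reduction clean: since the double cosets $W_{(l,\mu)}\backslash W_{n,r}/W_{(m,\nu)}$ partition $W_{n,r}$ and refine both the left cosets of $W_{(l,\mu)}$ and the right cosets of $W_{(m,\nu)}$, each subspace $M_u:=\bigoplus_{g\in W_{(l,\mu)}uW_{(m,\nu)}}R\,T_g$ is stable under left multiplication by $\sH_{(l,\mu)}$ and right multiplication by $\sH_{(m,\nu)}$ (one checks this on the generators $T_i$, $L_j$ using Lemma~\ref{H Lemma L T} and the relations~\eqref{H relations}), so $\sH_{(l,\mu)}T_u\sH_{(m,\nu)}\subseteq M_u$; in particular the sum $\sum_{u}\sH_{(l,\mu)}T_u\sH_{(m,\nu)}$ is automatically direct. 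On the other hand, applying Proposition~\ref{H Prop Hnr Hlmu} to the pair $\sH_{(k(u),\pi(u))}\subset\sH_{(l,\mu)}$ (permissible since $W_{(k(u),\pi(u))}$ is a standard parabolic of $W_{(l,\mu)}$ by Proposition~\ref{W Prop Wkupiu} and the discussion at~\eqref{W Def rep Wlmu Wkupiu}), together with $\sH_{(k(u),\pi(u))}T_u=T_u\sH_{(k(u),\pi^{\sharp}(u))}$ from Proposition~\ref{H Prop TuHmnu} and $\sH_{(k(u),\pi^{\sharp}(u))}\subset\sH_{(m,\nu)}$, gives $\sH_{(l,\mu)}T_u\sH_{(m,\nu)}=\sum_{w_1,w_2}R\,T_{w_1}T_uT_{w_2}$; that is, $\sH_{(l,\mu)}T_u\sH_{(m,\nu)}$ is spanned by the $\wt T_w$ with $w\in W_{(l,\mu)}uW_{(m,\nu)}$, and by Corollary~\ref{W Cor} there are exactly $|W_{(l,\mu)}uW_{(m,\nu)}|=\rank_R M_u$ of these. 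Granting~(i), we then have $\sH_{(l,\mu)}T_u\sH_{(m,\nu)}=M_u$ for every $u$, so the spanning set $\{\wt T_w\mid w\in W_{n,r}\}$, which has $|W_{n,r}|=\rank_R\sH_{n,r}$ elements, is an $R$-basis; this proves~(ii), and~(i) is the statement itself.

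It remains to prove~(i), equivalently that $\sH_{(l,\mu)}T_u\sH_{(m,\nu)}=M_u$ for each $u\in{}^{(l,\mu)}W^{(m,\nu)}$. Since $\sH_{(l,\mu)}T_u\sH_{(m,\nu)}$ is already known to be spanned by the $|W_{(l,\mu)}uW_{(m,\nu)}|$ elements $\{\wt T_{w_1uw_2}\}$ and to sit inside the free module $M_u$ of the same rank, it suffices to show that the matrix $A$ expressing these $\wt T_{w_1uw_2}$ in the standard basis $\{T_g\}_{g\in W_{(l,\mu)}uW_{(m,\nu)}}$ of $M_u$ is invertible over~$R$. I would do this by ordering the double coset $W_{(l,\mu)}uW_{(m,\nu)}$ first by the Bruhat length of the $\fS_n$-component and then lexicographically by the torus exponents, and showing that $A$ is triangular for this order with diagonal entries powers of~$q$; since $q$ is invertible in~$R$, this forces $\det A\in R^{\times}$. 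A single entry $\wt T_w=T_{w_1}T_uT_{w_2}$ is expanded in the standard basis by pushing the Jucys--Murphy factors $L_i$ to the right past the $T$'s with Lemma~\ref{H Lemma L T} and Lemma~\ref{H Lemma Tx L}, keeping track of the $\fS_n$-components by means of the length decomposition~\eqref{S length doublecoset decom}, Lemma~\ref{S Lemma}, the relations $T_zT_u=T_uT_{x^{-1}zx}$, $L_iT_u=T_uL_i$ of Proposition~\ref{H Prop TuHmnu}, and the braid-relations-only identity of Proposition~\ref{W prop only braid}, which governs how the middle factor $T_u$ commutes with the $T$'s on either side.

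The main obstacle is exactly this last computation. The delicate point is that, away from the "generic" element of the double coset, the $\fS_n$-lengths of $w_1$, $u$, $w_2$ need not add up, so one cannot simply read off the top term of $T_{w_1}T_uT_{w_2}$; one has to control, simultaneously, the $(q-1)$-type corrections produced by commuting torus factors through the $T$'s (Lemma~\ref{H Lemma L T}(iv),(v)) and the lower Bruhat terms produced by moving the $L_i$'s (Lemma~\ref{H Lemma Tx L}), and verify that both kinds of correction are strictly lower in the chosen order. This is precisely what the rigid choice of double coset representatives in \S\ref{cpxref} is designed to make possible: the minimality of the torus exponents of $u$ within its orbit $O(u)$, the fact (Proposition~\ref{W Prop Wkupiu}) that $W_{(l,\mu)}\cap uW_{(m,\nu)}u^{-1}=W_{(k(u),\pi(u))}$ and its conjugate $W_{(k(u),\pi^{\sharp}(u))}$ are standard parabolics inside $W_{(l,\mu)}$ and $W_{(m,\nu)}$ respectively, and Proposition~\ref{W prop only braid}, together suffice to make the triangularity go through; granting them, the remaining verification is a long but routine manipulation with~\eqref{H relations} and Lemma~\ref{H Lemma L T}.
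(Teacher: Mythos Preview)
Your structural observation at the very beginning is false: the subspace $M_u=\bigoplus_{g\in W_{(l,\mu)}uW_{(m,\nu)}} R\,T_g$ is \emph{not} in general stable under left multiplication by $\sH_{(l,\mu)}$ (nor under right multiplication by $\sH_{(m,\nu)}$). The Ariki--Koike basis $T_w=T_x L_1^{a_1}\cdots L_n^{a_n}$ is not compatible with the double-coset decomposition in the way the standard basis of an Iwahori--Hecke algebra of a Coxeter group is. Concretely, take $n=r=2$, $l=1$, $\mu=(1)$, $m=0$, $\nu=(1,1)$; then $W_{(l,\mu)}=\langle s_0\rangle$, $W_{(m,\nu)}=1$, and $u=s_1$ is a representative in ${}^{(l,\mu)}W^{(m,\nu)}$ with double coset $\{s_1,\,t_1s_1\}=\{s_1,\,s_1t_2\}$, so $M_u=R\,T_1\oplus R\,T_1L_2$. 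But by Lemma~\ref{H Lemma L T}\,(v) one has $T_0\cdot T_1=L_1T_1=T_1L_2-(q-1)L_2$, and $L_2=T_{t_2}$ with $t_2\notin\{s_1,s_1t_2\}$. Thus $T_0\cdot T_{s_1}\notin M_{s_1}$, so $M_{s_1}$ is not a left $\sH_{(l,\mu)}$-submodule; in particular $\wt T_{t_1s_1}=T_{t_1}T_{s_1}=T_0T_1\notin M_{s_1}$, and your triangularity matrix $A$ cannot even be set up.

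This is exactly why the paper does \emph{not} work double-coset-by-double-coset. It proves~(i) directly, showing that every $T_w$ lies in $\sum_u\sH_{(l,\mu)}T_u\sH_{(m,\nu)}$ via a two-layer induction: first on the partial order $\succeq$ restricted to ${}^{(l,\mu)}W\cap W^{(m,\nu)}$ (using the minimality of each $u$ in its orbit $O(u)$ together with Lemma~\ref{H Lemma L T} to absorb the $(q-1)$-corrections into strictly smaller elements already handled), and then on the $\fS_n$-length of the symmetric-group component of a general $w$ (using Lemma~\ref{H Lemma Tx L}). Part~(ii) then follows because (i), Proposition~\ref{H Prop Hnr Hlmu} and Proposition~\ref{H Prop TuHmnu} show that the $\wt T_w$ span $\sH_{n,r}$; since there are $|W_{n,r}|=\rank_R\sH_{n,r}$ of them, the surjective $R$-endomorphism $T_w\mapsto\wt T_w$ of the finite free module $\sH_{n,r}$ is an isomorphism (\cite[Theorem~2.4]{Mats}). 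Your reduction of~(ii) to~(i) via spanning plus cardinality is essentially this last step, but your argument for~(i) itself rests on the false bimodule claim and cannot be salvaged without abandoning the double-coset filtration.
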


\begin{proof} 
We prove (\roi). 
First we prove that 
\begin{align}
\label{H Tv}
T_v \in \sum_{ u \in \,^{(l,\mu)} W^{(m,\nu)}} \sH_{(l,\mu)} T_u \sH_{(m,\nu)}  
\text{ for any } v =x \prod_{i \in I(x)} t_i^{a_i}  \in \,^{(l,\mu)}W \cap W^{(m,\nu)} 
\end{align}
by induction on the order $\succeq$ on $^{(l,\mu)}W \cap W^{(m,\nu)} $. 

If $v$ is minimal in $^{(l,\mu)}W \cap W^{(m,\nu)} $, 
it is also minimal in $O(v)$. 
Then we have $v \in \,^{(l,\mu)}W^{(m,\nu)}$, 
and  \eqref{H Tv} is clear.

Suppose that $v$ is not minimal in $^{(l,\mu)}W \cap W^{(m,\nu)} $.  
If $v$ is minimal in $O(v)$, 
we have $v \in \,^{(l,\mu)}W^{(m,\nu)}$, 
and  \eqref{H Tv} is clear. 
We also suppose that $v$ is not minimal in $O(v)$.  
Then we see that 
there exists $s_{j'} = x s_j x^{-1} \in  S_{\tau(x)} = S_{(l,\mu)} \cap x S_{(m,\nu)} x^{-1} $such that 
$a_j > a_{j+1}$ and $j, j+1 \in I(x)$ 
by definitions (see \eqref{W-y I}, \eqref{W act St I} and \eqref{W Def order succ}). 
Since $x \in \,^{(l,\mu)} \fS_n^{(m,\nu)}$, 
we have 
$\ell(s_{j'} x)= \ell(s_{j'}) + \ell(x)$ and $\ell(x s_j)= \ell (x) + \ell(s_j)$. 
Thus we have 
$ T_{j'} T_x = T_{s_{j'} x} = T_{x s_j} = T_x T_j$. 
Put $a_i =0$ if $i \not\in I(x)$. 
Then we have 
\begin{align}
\label{HTj' Tv}
\begin{split}
T_{j'} T_v 
&= T_{j'} (T_x  L_1^{a_1} L_2^{a_2} \dots L_n^{a_n})
\\
&= T_x T_j ( L_1^{a_1} L_2^{a_2} \dots L_n^{a_n})
\\
&= T_x (L_1^{a_1} \dots L_{j-1}^{a_{j-1}}) 
	(L_j^{a_{j+1}} L_{j+1}^{a_j} ) (L_{j+2}^{a_{j+2}} \dots L_n^{a_n}) T_j 
	\\ & \quad 
	- (q-1) \sum_{c= a_{j+1}}^{a_j -1} 
		T_x (L_1^{a_1} \dots L_{j-1}^{a_{j-1}})  (L_j^c L_{j+1}^{a_j + a_{j+1}-c} ) 
			(L_{j+2}^{a_{j+2}} \dots L_n^{a_n}),
\end{split}
\end{align}
where we use Lemma \ref{H Lemma L T} in the last equation 
(note $a_j > a_{j+1}$). 
Since 
\begin{align*}
&x (t_1^{a_1} \dots t_{j-1}^{a_{j-1}}) ( t_j^{a_{j+1}} t_{j+1}^{a_j} ) (t_{j+2}^{a_{j+2}} \dots t_n^{a_n}) 
	\prec v, 
\\
&x (t_1^{a_1} \dots t_{j-1}^{a_{j-1}}) (t_j^{c} t_{j+1}^{a_j + a_{j+1}-c}  )
	 (t_{j+2}^{a_{j+2}} \dots t_n^{a_n})  \prec v 
	 \quad (a_{j + 1} \leq c \leq a_j-1), 
\end{align*}
we have 
\begin{align*}
&T_x (L_1^{a_1} \dots L_{j-1}^{a_{j-1}}) 
	(L_j^{a_{j+1}} L_{j+1}^{a_j} ) (L_{j+2}^{a_{j+2}} \dots L_n^{a_n}) 
	\in \sum_{ u \in \,^{(l,\mu)} W^{(m,\nu)}} \sH_{(l,\mu)} T_u \sH_{(m,\nu)} , 
\\
& T_x (L_1^{a_1} \dots L_{j-1}^{a_{j-1}})  (L_j^c L_{j+1}^{a_j + a_{j+1}-c} ) 
			(L_{j+2}^{a_{j+2}} \dots L_n^{a_n})
	\in \sum_{ u \in \,^{(l,\mu)} W^{(m,\nu)}} \sH_{(l,\mu)} T_u \sH_{(m,\nu)} , 
\end{align*}
by the assumption of the induction. 
Together with \eqref{HTj' Tv}, we have 
\begin{align*}
T_v \in \sum_{ u \in \,^{(l,\mu)} W^{(m,\nu)}} \sH_{(l,\mu)} T_u \sH_{(m,\nu)}, 
\end{align*} 
where we note that $T_{j'}^{-1} \in \sH_{(l,\mu)}$ and $T_j \in \sH_{(m,\nu)}$. 
Thus we proved \eqref{H Tv}. 

In order to prove (\roi), 
it is enough to show that 
\begin{align}
\label{H Tw in}
T_w \in \sum_{ u \in \,^{(l,\mu)} W^{(m,\nu)}} \sH_{(l,\mu)} T_u \sH_{(m,\nu)}  
\text{ for any } 
w = x t_1^{a_1} t_2^{a_2} \dots t_n^{a_n} \in W_{n,r}. 
\end{align}
We prove \eqref{H Tw in} by the induction on $\ell(x)$. 

If $\ell(x)=0$, we have 
\begin{align}  
\label{H Tw x=0}
T_w = L_1^{a_1} L_2^{a_2} \dots L_n^{a_n} 
= \begin{cases} 
	(L_1^{a_1} L_2^{a_2} \dots L_l^{a_l}) (L_{l+1}^{a_{l+1}}  L_{l+2}^{a_{l+2}} \dots L_n^{a_n}) 
		& \text{ if } l \geq m, 
	\\
	(L_{m+1}^{a_{m+1}} L_{m+2}^{a_{m+2}} \dots L_n^{a_n}) 
		(L_1^{a_1} L_2^{a_2} \dots L_{m}^{a_m}) 
		& \text{ if } l <m. 
\end{cases}
\end{align}
We see that 
\begin{align*}
&(t_{l+1}^{a_{l+1}}  t_{l+2}^{a_{l+2}} \dots t_n^{a_n})  \in \,^{(l,\mu)}W \cap W^{(m,\nu)} 
	\text{ if } l \geq m, 
\\
& (t_{m+1}^{a_{m+1}} t_{m+2}^{a_{m+2}} \dots t_n^{a_n}) \in \,^{(l,\mu)}W \cap W^{(m,\nu)} 
	\text{ if } l <m, 
\end{align*}
where we note that 
$I(e) =[m+1, n] \cap \{l+1, l+2, \dots, n\}$ 
for the identity element $e \in \fS_n$. 
Thus we have 
\begin{align*}
&(L_1^{a_1} L_2^{a_2} \dots L_l^{a_l}) \in \sH_{(l,\mu)}, 
\\
&(L_1^{a_1} L_2^{a_2} \dots L_{m}^{a_m})  \in \sH_{(m,\nu)}, 
\\
&(L_{l+1}^{a_{l+1}}  L_{l+2}^{a_{l+2}} \dots L_n^{a_n})
	\in \sum_{ u \in \,^{(l,\mu)} W^{(m,\nu)}} \sH_{(l,\mu)} T_u \sH_{(m,\nu)} 
	\text{ if } l \geq m, 
\\
& (L_{m+1}^{a_{m+1}} L_{m+2}^{a_{m+2}} \dots L_n^{a_n})
	\in \sum_{ u \in \,^{(l,\mu)} W^{(m,\nu)}} \sH_{(l,\mu)} T_u \sH_{(m,\nu)} 
	\text{ if } l < m
\end{align*}
by \eqref{H Tv}. 
Then, together with \eqref{H Tw x=0}, we have 
$T_w \in \sum_{ u \in \,^{(l,\mu)} W^{(m,\nu)}} \sH_{(l,\mu)} T_u \sH_{(m,\nu)} $. 

Suppose that $\ell(x) >0$. 
We can uniquely write 
$x = x_1 x_2 x_3$ 
for some 
$x_2 \in \,^{(l,\mu)} \fS^{(m,\nu)}$, $x_1 \in (\fS_{(l,\mu)})^{\tau(x_2)}$ 
and $ x_3 \in \fS_{(m,\nu)}$. 
Then we have $\ell(x) = \ell(x_1) + \ell(x_2) + \ell(x_3)$ by the general theory of Coxeter group. 
Thus we have 
\begin{align*}
T_w 
= T_{x_1} T_{x_2} T_{x_3} L_1^{a_1} L_2^{a_2} \dots L_n^{a_n} 
= T_{x_1} T_{x_2} T_{x_3} (L_{m+1}^{a_{m+1}} L_{m+2}^{a_{m+2}} \dots L_n^{a_n}) 
	(L_1^{a_1} L_2^{a_2} \dots L_m^{a_m}). 
\end{align*}
Applying Lemma \ref{H Lemma Tx L},  
we have 
\begin{align}
\label{H Tw decom}
\begin{split}
T_w 
&= T_{x_1} T_{x_2} (L_{x_3(m+1)}^{a_{m+1}} L_{x_3(m+2)}^{a_{m+2}} \dots L_{x_3(n)}^{a_n}) 
	T_{x_3} (L_1^{a_1} L_2^{a_2} \dots L_m^{a_m})
	\\ & \quad 
	+ \sum_{y_3 < x_3}  \sum_{  (b_{m+1}, \dots b_n) \in [0,r-1]^{n-m} } r_{y_3}^{(b_{m+1}, \dots, b_n)} 
		T_{x_1} T_{x_2} T_{y_3} (L_{m+1}^{b_{m+1}} L_{m+2}^{b_{m+2}} \dots L_n^{b_n}) 
		(L_1^{a_1} L_2^{a_2} \dots L_m^{a_m}). 
\end{split}
\end{align}
Since $T_{x_1} T_{x_2} T_{x_3} = T_{x}$, 
we have 
\begin{align}
\label{H sum in}
&\sum_{y_3 < x_3}  \sum_{  (b_{m+1}, \dots b_n) \in [0,r-1]^{n-m} } r_{y_3}^{(b_{m+1}, \dots, b_n)} 
		T_{x_1} T_{x_2} T_{y_3} (L_{m+1}^{b_{m+1}} L_{m+2}^{b_{m+2}} \dots L_n^{b_n}) 
		(L_1^{a_1} L_2^{a_2} \dots L_m^{a_m})
 \\ & \hspace*{2cm}
 \in \sum_{ u \in \,^{(l,\mu)} W^{(m,\nu)}} \sH_{(l,\mu)} T_u \sH_{(m,\nu)}  \nonumber
\end{align} 
by the assumption of the induction. 
Note that $\{x_3(m+1), x_3(m+2), \dots, x_3(n)\} =[m+1,n]$ by $x_3 \in \fS_{(m,\nu)}$. 
Applying Lemma \ref{H Lemma Tx L}, 
we have 
\begin{displaymath}
\begin{split}
T_{x_1} T_{x_2} (L_{x_3(m+1)}^{a_{m+1}} & L_{x_3(m+2)}^{a_{m+2}} \dots L_{x_3(n)}^{a_n}) \\
&= T_{x_1} T_{x_2} (L_{m+1}^{a'_{m+1}} L_{m+2}^{a'_{m+2}} \dots L_{n}^{a'_n}) 
\\
&= T_{x_1} (L_{x_2(m+1)}^{a'_{m+1}} L_{x_2(m+2)}^{a'_{m+2}} \dots L_{x_2 (n)}^{a'_n}) T_{x_2} 
	\\  
	&\qquad + \sum_{y_2 < x_2} \sum_{(b_1, \dots, b_n) \in [0,r-1]^n} 
		r_{y_2}^{(b_1, \dots, b_n)} T_{x_1} T_{y_2} (L_1^{b_1} L_2^{b_2} \dots L_n^{b_n}), 
\end{split}
\end{displaymath}
where we put $a'_i = a_{x_3^{-1}(i)}$ for $i=m+1,\dots,n$. 
We have 
\[
\sum_{y_2 < x_2} \sum_{(b_1, \dots, b_n) \in [0,r-1]^n} 
		r_{y_2}^{(b_1, \dots, b_n)} T_{x_1} T_{y_2} (L_1^{b_1} L_2^{b_2} \dots L_n^{b_n}) 
\in \sum_{ u \in \,^{(l,\mu)} W^{(m,\nu)}} \sH_{(l,\mu)} T_u \sH_{(m,\nu)}  
\]
by the assumption of the induction. 
Applying Lemma \ref{H Lemma Tx L}, we also have 
\begin{align*}
T_{x_1} (L_{x_2(m+1)}^{a'_{m+1}} & L_{x_2(m+2)}^{a'_{m+2}} \dots L_{x_2 (n)}^{a'_n}) T_{x_2} 
= T_{x_1} (\prod_{m+1 \leq i \leq n \atop x_2(i) \leq l} L_{x_2(i)}^{a'_i}) 
	( \prod_{m+1 \leq i \leq n \atop x_2(i) >l} L_{x_2(i)}^{a'_i}) T_{x_2} 
\\
&= T_{x_1} (\prod_{m+1 \leq i \leq n \atop x_2(i) \leq l} L_{x_2(i)}^{a'_i}) 
	T_{x_2} ( \prod_{ i \in I(x_2)} L_i^{a'_i}) 
	\\ & \qquad 
	- T_{x_1} (\prod_{m+1 \leq i \leq n \atop x_2(i) \leq l} L_{x_2(i)}^{a'_i}) 
	(\sum_{y_2 < x_2} \sum_{(b_1, \dots, b_n) \in [0,r-1]^n} 
		r_{y_2}^{(b_1, \dots, b_n)} 
		T_{y_2} (L_1^{b_1}L_2^{b_2} \dots L_n^{b_n}))
\\
& \qquad \in \sum_{ u \in \,^{(l,\mu)} W^{(m,\nu)}} \sH_{(l,\mu)} T_u \sH_{(m,\nu)}  
\end{align*}
since 
$T_{x_1} (\prod_{m+1 \leq i \leq n \atop x_2(i) \leq l} L_{x_2(i)}^{a'_i}) \in \sH_{(l,\mu)}$, 
$T_{x_2} (\prod_{i \in I(x_2)} L_i^{a'_i}) 
\in \sum_{ u \in \,^{(l,\mu)} W^{(m,\nu)}} \sH_{(l,\mu)} T_u \sH_{(m,\nu)} $ 
by \eqref{H Tv}, 
and 
$T_{y_2}(L_1^{b_1}L_2^{b_2} \dots L_n^{b_n}) 
\in \sum_{ u \in \,^{(l,\mu)} W^{(m,\nu)}} \sH_{(l,\mu)} T_u \sH_{(m,\nu)}$ 
for $y_2 < x_2$ 
by the assumption of the induction. 
As a consequence, we have 
\begin{align*}
T_{x_1} T_{x_2} (L_{x_3(m+1)}^{a_{m+1}} L_{x_3(m+2)}^{a_{m+2}} \dots L_{x_3(n)}^{a_n}) 
\in \sum_{ u \in \,^{(l,\mu)} W^{(m,\nu)}} \sH_{(l,\mu)} T_u \sH_{(m,\nu)}, 
\end{align*}
and this implies that 
\begin{align}
\label{H Tx1 Tx2}
T_{x_1} T_{x_2} (L_{x_3(m+1)}^{a_{m+1}} L_{x_3(m+2)}^{a_{m+2}} \dots L_{x_3(n)}^{a_n}) 
T_{x_3} (L_1^{a_1} L_2^{a_2} \dots L_m^{a_m})
\in \sum_{ u \in \,^{(l,\mu)} W^{(m,\nu)}} \sH_{(l,\mu)} T_u \sH_{(m,\nu)}, 
\end{align}
since $T_{x_3} (L_1^{a_1} L_2^{a_2} \dots L_m^{a_m}) \in \sH_{(m,\nu)}$. 
Thanks to \eqref{H Tw decom},  \eqref{H sum in} and \eqref{H Tx1 Tx2}, 
we obtain \eqref{H Tw in}, 
and we proved (\roi). 

We prove (\roii). 
For each $u \in \,^{(l,\mu)} W ^{(m,\nu)}$, 
we see that 
\begin{align*} 
\{T_{w_1} T_v \mid w_1 \in (W_{(l,\mu)})^{(k(u), \pi(u))}, \, v \in W_{(k(u),\pi(u))}\}
\end{align*} 
is an $R$-free basis of $\sH_{(l,\mu)}$ by Proposition \ref{H Prop Hnr Hlmu}. 
Note that  $T_u \sH_{(m,\nu)}$ is a left $\sH_{(k(u),\pi(u))}$-module by Proposition \ref{H Prop TuHmnu}, 
then (\roi) implies that 
$\sH_{n,r}$ is spanned by 
\begin{align*}
\{ T_{w_1} T_u T_{w_2}
	 \mid u \in \,^{(l,\mu)} W^{(m,\nu)}, \, w_1 \in (W_{(l,\mu)})^{(k(u),\pi(u))}, \, w_2 \in W_{(m,\nu)} \} 
= \{\wt{T}_w \mid w \in W_{n,r}\} 
\end{align*}
as an $R$-module. 
Then  
we can define the surjective homomorphism of $R$-modules 
$\phi : \sH_{n,r} \ra \sH_{n,r}$ such that $\phi(T_w) = \wt{T}_w$ ($w \in W_{n,r}$), 
and $\phi$ is an isomorphism by \cite[Theorem 2.4]{Mats}. 
Thus $\{\wt{T}_w \mid w \in W_{n,r}\}$ is an $R$-free basis of $\sH_{n,r}$. 
\end{proof}


\para \label{H Def functors}
For a parabolic subgroup $W_{(l,\mu)}$ of $W_{n,r}$ 
(resp. $W_{(k(u), \pi^{\sharp}(u))}$ of $W_{(m,\nu)}$), 
we define the restriction functor 
\begin{align*} 
&\HRes^{W_{n,r}}_{W_{(l,\mu)}} : \sH_{n,r} \cmod \ra \sH_{(l,\mu)} \cmod
\\
& 
(\text{resp. } 
 \HRes^{W_{(m,\nu)}}_{W_{(k(u), \pi^{\sharp}(u))}}
 : \sH_{(m,\nu)}
\cmod
 \ra \sH_{(k(u), \pi^{\sharp}(u))}
\cmod
 )
\end{align*} 
by the restriction of the action. 
We also define the induction functor 
\begin{align*}
\HInd^{W_{n,r}}_{W_{(l,\mu)}} = \sH_{n,r} \otimes_{\sH_{(l,\mu)}} - : \sH_{(l,\mu)} \cmod \ra \sH_{n,r} \cmod,
\end{align*} 
where we regard $\sH_{n,r}$ as an $(\sH_{n,r}, \sH_{(l,\mu)})$-bimodule by multiplications. 

For $u \in \,^{(l,\mu)} W^{(m,\nu)}$,  
we define the induction functor
\begin{align*} 
\HInd^{W_{(l,\mu)}}_{W_{(k(u), \pi(u))}} 
	= \sH_{(l,\mu)} \otimes_{\sH_{(k(u),\pi(u))}} - :
 \sH_{(k(u),\pi(u))} \cmod \ra \sH_{(l,\mu)}
\cmod
 , 
\end{align*}
where we note that $\sH_{(k(u), \pi(u))}$ is a subalgebra of $\sH_{(l,\mu)}$. 
We also define the functor  $T_u(-) : \sH_{(k(u), \pi^{\sharp}(u))} \cmod \ra \sH_{(k(u), \pi(u))} \cmod$ by 
\begin{align*} 
T_u(-)= T_u \sH_{(k(u), \pi^{\sharp}(u))} \otimes_{\sH_{(k(u), \pi^{\sharp}(u))}} - 
	: \sH_{(k(u), \pi^{\sharp}(u))} \cmod \ra \sH_{(k(u), \pi(u))} \cmod. 
\end{align*} 
We see that any functor defined in the above is exact by Proposition \ref{H Prop Hnr Hlmu}. 
Then now we obtain the first main theorem of this paper.

\begin{thm}[The Mackey formula for cyclotomic Hecke algebras]
\label{H Thm Mackey}
For $0\leq l,m \leq n$, $\mu \vDash n-l$ and $\nu \vDash n-m$, we have the following: 
\begin{enumerate} 
\item 
There exists an isomorphism of $(\sH_{(l,\mu)}, \sH_{(m,\nu)})$-bimodules 
\begin{align*}
\sH_{n,r} \ra \bigoplus_{ u \in \, ^{(l,\mu)}W^{(m,\nu)}} 
	(\sH_{(l,\mu)} \otimes_{\sH_{(k(u), \pi(u))}} T_u \sH_{(m,\nu)})
\end{align*}
such that 
$\wt{T}_w =T_{w_1} T_u T_{w_2} \mapsto T_{w_1} \otimes T_u T_{w_2}$ 
($u \in \,^{(l,\mu)} W^{(m,\nu)}$, $w_1 \in (W_{(l,\mu)})^{(k(u),\pi(u))}$, $w_2 \in W_{(m,\nu)}$). 

\item 
For a left $\sH_{(m,\nu)}$-module $M$, we have a natural isomorphism of left $\sH_{(l,\mu)}$-modules 
\begin{align*}
\sH_{n,r} \otimes_{\sH_{(m,\nu)}} M 
\cong 
\bigoplus_{u \in \, ^{(l,\mu)}W^{(m,\nu)}} 
	(\sH_{(l,\mu)} \otimes_{\sH_{(k(u), \pi(u))}} T_u \sH_{(m,\nu)}) \otimes_{\sH_{(m,\nu)}} M.
\end{align*}

\item 
We have an isomorphism of functors 
\begin{align*}
\HRes_{W_{(l,\mu)}}^{W_{n,r}} \circ \HInd_{W_{(m,\nu)}}^{W_{n,r}} 
\cong 
\bigoplus_{u \in \,^{(l,\mu)}W^{(m,\nu)}} 
	\HInd^{W_{(l,\mu)}}_{W_{(k(u), \pi(u))}} \circ T_u(-) \circ \HRes^{W_{(m,\nu)}}_{W_{(k(u),\pi^{\sharp}(u))}}. 
\end{align*}
\end{enumerate}
\end{thm}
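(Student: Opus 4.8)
The plan is to deduce Theorem~\ref{H Thm Mackey} from the structural input already assembled: the basis statements of Proposition~\ref{H Prop Hnr Hlmu} and Proposition~\ref{H H decom}, the bimodule compatibility of Proposition~\ref{H Prop TuHmnu}, the parametrization of $W_{n,r}$ by Corollary~\ref{W Cor}, and the tensor identity of Corollary~\ref{H Cor Tu func}. Granting those, (i) becomes a basis count and (ii), (iii) are formal. To begin (i), I would first build the candidate map. By Proposition~\ref{H Prop TuHmnu}, for each $u = x\prod_{i\in I(x)} t_i^{a_i} \in {}^{(l,\mu)}W^{(m,\nu)}$ the right $\sH_{(m,\nu)}$-module $T_u\sH_{(m,\nu)}$ also carries a left $\sH_{(k(u),\pi(u))}$-module structure realised by multiplication inside $\sH_{n,r}$; hence multiplication in $\sH_{n,r}$ descends to a well-defined $(\sH_{(l,\mu)},\sH_{(m,\nu)})$-bimodule map
\[
\sH_{(l,\mu)}\otimes_{\sH_{(k(u),\pi(u))}} T_u\sH_{(m,\nu)} \longrightarrow \sH_{n,r}, \qquad h_1\otimes T_u h_2 \longmapsto h_1 T_u h_2,
\]
the only point being that this respects the balancing over $\sH_{(k(u),\pi(u))}$, which is associativity of multiplication together with Proposition~\ref{H Prop TuHmnu}. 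Summing over $u$ yields a bimodule homomorphism $\Phi$ from the right-hand side of (i) to $\sH_{n,r}$.

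Next I would show $\Phi$ is an isomorphism by comparing $R$-bases. On the source: by Proposition~\ref{H Prop Hnr Hlmu}, applied to $\sH_{(l,\mu)}$ and its standard parabolic subalgebra $\sH_{(k(u),\pi(u))}$ with the coset representatives of \ref{W Def rep Wlmu Wkupiu}, the algebra $\sH_{(l,\mu)}$ is free as a right $\sH_{(k(u),\pi(u))}$-module on $\{T_{w_1} \mid w_1\in (W_{(l,\mu)})^{(k(u),\pi(u))}\}$, while $\{T_uT_{w_2}\mid w_2\in W_{(m,\nu)}\}$ is an $R$-basis of $T_u\sH_{(m,\nu)}$ because $u\in W^{(m,\nu)}$; hence $\{T_{w_1}\otimes T_uT_{w_2}\}$, with $u$, $w_1$, $w_2$ running as above, is an $R$-basis of the source of $\Phi$. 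Since $\Phi(T_{w_1}\otimes T_uT_{w_2})=T_{w_1}T_uT_{w_2}=\wt{T}_w$ with $w=w_1uw_2$, and since the triples $(w_1,u,w_2)$ parametrize $W_{n,r}$ bijectively by Corollary~\ref{W Cor} while $\{\wt{T}_w\mid w\in W_{n,r}\}$ is an $R$-basis of $\sH_{n,r}$ by Proposition~\ref{H H decom}~(ii), the map $\Phi$ sends a basis bijectively onto a basis and is therefore an isomorphism. This gives (i).

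Part (ii) then follows by applying $-\otimes_{\sH_{(m,\nu)}} M$ to the isomorphism of (i), using that tensor product commutes with finite direct sums and that $\sH_{n,r}\otimes_{\sH_{(m,\nu)}} M$ is, as a left $\sH_{(l,\mu)}$-module, exactly $\HRes^{W_{n,r}}_{W_{(l,\mu)}}\circ \HInd^{W_{n,r}}_{W_{(m,\nu)}}(M)$; naturality in $M$ is immediate. For (iii) I would simply unwind the definitions from \ref{H Def functors}: on an object $M$,
\[
\HInd^{W_{(l,\mu)}}_{W_{(k(u),\pi(u))}}\circ T_u(-)\circ \HRes^{W_{(m,\nu)}}_{W_{(k(u),\pi^{\sharp}(u))}}(M) = \sH_{(l,\mu)}\otimes_{\sH_{(k(u),\pi(u))}}\bigl(T_u\sH_{(k(u),\pi^{\sharp}(u))}\otimes_{\sH_{(k(u),\pi^{\sharp}(u))}} M\bigr),
\]
and Corollary~\ref{H Cor Tu func}~(ii), together with associativity of tensor products and $\sH_{(m,\nu)}\otimes_{\sH_{(m,\nu)}} M = M$, identifies $T_u\sH_{(m,\nu)}\otimes_{\sH_{(m,\nu)}} M \cong T_u\sH_{(k(u),\pi^{\sharp}(u))}\otimes_{\sH_{(k(u),\pi^{\sharp}(u))}} M$. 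Substituting this into the $u$-th summand of (ii) matches its right-hand side, applied to $M$, with $\bigoplus_{u} \HInd^{W_{(l,\mu)}}_{W_{(k(u),\pi(u))}}\circ T_u(-)\circ \HRes^{W_{(m,\nu)}}_{W_{(k(u),\pi^{\sharp}(u))}}(M)$, naturally in $M$, which is (iii).

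I expect the main obstacle to be the bookkeeping in (i) rather than any deep argument: the genuine content is carried by Propositions~\ref{W prop only braid}, \ref{H Prop TuHmnu} and~\ref{H H decom}. The delicate points are checking that the left $\sH_{(k(u),\pi(u))}$-action on $T_u\sH_{(m,\nu)}$ used to form the tensor product is precisely the one coming from multiplication in $\sH_{n,r}$ (so that $\Phi$ is well-defined), that the three $R$-bases invoked are mutually compatible so the count is legitimate, and that the element $\wt{T}_w$ attached by Proposition~\ref{H H decom} to the triple of Corollary~\ref{W Cor} is literally the product $T_{w_1}T_uT_{w_2}$. Once these are settled, (ii) and (iii) are purely formal manipulations of tensor products.
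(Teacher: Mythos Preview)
Your proof is correct and follows essentially the same approach as the paper: the paper builds the multiplication map $\Psi_u:\sH_{(l,\mu)}\otimes_{\sH_{(k(u),\pi(u))}}T_u\sH_{(m,\nu)}\to\sH_{n,r}$ (your $\Phi$), checks it is $\sH_{(k(u),\pi(u))}$-balanced via Proposition~\ref{H Prop TuHmnu}, and then verifies bijectivity using the basis $\{\wt{T}_w\}$ from Proposition~\ref{H H decom}~(ii), deducing (ii) and (iii) formally from (i) and Corollary~\ref{H Cor Tu func}. The only cosmetic difference is that the paper also writes down the inverse map explicitly on the basis $\{\wt{T}_w\}$ rather than arguing directly that a basis maps to a basis.
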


\begin{proof} 
We prove (\roi). 
Since $\{\wt{T}_w \mid w \in W_{n,r}\}$ is an $R$-free basis of $\sH_{n,r}$ by Proposition \ref{H H decom} (\roii), 
we can define a homomorphism of $R$-modules 
\begin{align*}
\Phi : \sH_{n,r}  \ra \bigoplus_{ u \in \, ^{(l,\mu)}W^{(m,\nu)}} 
	(\sH_{(l,\mu)} \otimes_{\sH_{(k(u), \pi(u))}} T_u \sH_{(m,\nu)}), 
\end{align*}
by $\wt{T}_w = T_{w_1} T_u T_{w_2} \mapsto T_{w_1} \otimes T_u T_{w_2}$ 
($u \in \,^{(l,\mu)} W^{(m,\nu)}$, $w_1 \in (W_{(l,\mu)})^{(k(u),\pi(u))}$, $w_2 \in W_{(m,\nu)}$). 
In order to define the inverse map of $\Phi$, 
for $u \in \,^{(l,\mu)}W^{(m,\nu)}$, 
let 
\begin{align*}
\Psi'_u : \sH_{(l,\mu)} \times T_u \sH_{(m,\nu)} \ra \sH_{n,r}, 
\end{align*}
be the multiplication map in $\sH_{n,r}$. 
Since $T_u \sH_{(m,\nu)}$ (resp. $\sH_{(l,\mu)}$) is a left (resp. right) $\sH_{(k(u),\pi(u))}$-module 
by multiplications in $\sH_{n,r}$ (see Proposition \ref{H Prop TuHmnu}), 
it is clear that $\Psi'_u$ is a $\sH_{(k(u),\pi(u))}$-balanced map. 
Thus we have the homomorphism of $R$-modules 
\begin{align*}
\Psi_u : \sH_{(l,\mu)} \otimes_{\sH_{(k(u),\pi(u))}} T_u \sH_{(m,\nu)} \ra \sH_{n,r}, 
\quad 
X \otimes Y \mapsto XY. 
\end{align*}
Then it is clear that 
$\Psi = \bigoplus_{ u \in \,^{(l,\mu)}W^{(m,\nu)}} \Psi_u$ 
is the inverse map of $\Phi$, 
and we see that $\Phi$ is isomorphism. 
It is clear that 
$\Psi$ is an isomorphism of $(\sH_{(l,\mu)}, \sH_{(m,\nu)})$-bimodules 
since actions in both sides are given by multiplications. 
Thus $\Phi$ is also an isomorphism of $(\sH_{(l,\mu)}, \sH_{(m,\nu)})$-bimodules. 
(\roii) follows from (\roi), 
and (\roiii) follows from (\roi) together with  Corollary \ref{H Cor Tu func}. 
\end{proof}



\section{The categories $\cO$ of rational Cherednik algebras} 
\label{category O}
In this section, we review some known results for categories $\cO$ of 
rational Cherednik algebras, and we also prepare some properties for
the next section. 

\para
Let $W$ be a finite complex reflection group and let $\frh$ be the $\C$-vector space on 
which $W$ acts by reflections. 
Let $\sA_W$ be the set of reflection hyperplanes,
and let $\frh_{W}^{reg} = \frh \backslash \bigcup_{H \in \sA_W} H$ be its complement.
We denote by $\cS_W$ the set of reflections in $W$. For $s \in \cS_W$, write $\lambda_s$
for the non-trivial eigenvalue of $s$ in $\frh^*$. For $s \in \cS_W$, let $\alpha_s \in \frh^*$ be
a generator of $\Image(s |_{\frh^*} - 1)$ and let $\alpha_s^{\vee}$ be the
generator of $\Image(s |_{\frh} - 1)$ such that 
$\langle \alpha_s, \alpha_s^{\vee} \rangle = 2$ where $\langle \;\;, \;\;\rangle$
is the standard pairing between $\frh$ and $\frh^*$.
Let $\calD(\frh_{W}^{reg})$ be the $\C$-algebra of algebraic differential operators 
on the smooth affine manifold $\frh_{W}^{reg}$. 
The action of the group $W$ on $\frh$ induces an action of $W$
on the $\C$-algebra $\calD(\frh_{W}^{reg})$. We denote the smash product of the algebra 
$\calD(\frh_{W}^{reg})$ and the group $W$ by $\calD(\frh_{W}^{reg}) \rtimes W$.
The rational Cherednik algebra $H(W) = H(W, \frh)$ associated with $W$ is a subalgebra of 
$\calD(\frh_{W}^{reg}) \rtimes W$ which is generated by elements of $\C[\frh]$,
elements of $W$ and the Dunkl operators $D_{\xi}$ for $\xi \in \frh$:
\[
 D_{\xi} = \partial_{\xi} + \sum_{s \in \cS_W} \frac{2 c_s}{1 - \lambda_s} 
 \frac{\alpha_s(\xi)}{\alpha_s} (s - 1) \in \calD(\frh_{W}^{reg}) \rtimes W
\]
where $\{c_s\}_{s \in \cS_W}$ is the parameter of the algebra $H(W)$. 

\para
The category $\cO(W)$ is a full subcategory of the category of finitely generated
$H(W)$-modules on which object the Dunkl operators acts locally
nilpotently.  For a module $M \in \cO(W)$, we consider the localization
$M^{an} = \cO^{an}_{\frh_{W}^{reg}} \otimes_{\C[\frh]} M$ where $\cO^{an}_{\frh_{W}^{reg}}$ is the
sheaf of holomorphic functions on $\frh_{W}^{reg}$. Since we have 
$\C[\frh_{W}^{reg}] \otimes_{\C[\frh]} H(W) = \calD(\frh_{W}^{reg}) \rtimes W$, the algebra
$\calD(\frh_{W}^{reg}) \rtimes W$ acts on $M^{an}$. Namely, $M^{an}$ is a vector bundle on 
$\frh_{W}^{reg}$ with a $W$-equivariant flat connection. Let $(M^{an})^{\nabla}$ be
the $W$-equivariant local system of horizontal sections of $M^{an}$. For any point
$p \in \frh_{W}^{reg}$, the stalk $(M^{an})^{\nabla}_{p}$ at the point $p$ is a 
finite-dimensional vector space over $\C$. Fix a point $p_0 \in \frh_{W}^{reg}$, and then 
we set $\KZ_W(M) = (M^{an})^{\nabla}_{p_0}$ as a vector space. 

We review an action of a Hecke algebra on $\KZ_W(M) = (M^{an})^{\nabla}_{p_0}$ defined by
monodromy. First note that the $W$-equivariant local system $(M^{an})^{\nabla}$ on
$\frh_{W}^{reg}$ is naturally identified with a local system on $\frh_{W}^{reg} / W$. Thus,
we have an action of the fundamental group $\pi_1(\frh_{W}^{reg} / W, \bar{p}_0)$ on
$\KZ_W(M) = (M^{an})^{\nabla}_{p_0}$ via monodromy representation, where $\bar{p}_0$ is
the image of $p_0 \in \frh_{W}^{reg}$ on $\frh_{W}^{reg} / W$. By
\cite{GGOR}, this action factors
through a Hecke algebra $\sH(W)$ associated with $W$ with a parameter $q$ determined by
the formula in \cite[Section 5.2]{GGOR}.

Let $[0,1]$ be the real closed interval between $0$ and $1$ (not an
integer interval).
For a path $\gamma : [0, 1] \longrightarrow \frh_{W}^{reg}$ and a germ
$v \in (M^{an})^{\nabla}_{\gamma(0)}$ at $\gamma(0)$, we have its analytic continuation 
$v' \in (M^{an})^{\nabla}_{\gamma(1)}$ at $\gamma(1)$ through the path $\gamma$.
Then, we define an operator of analytic continuation
\[
 S_M(\gamma): (M^{an})^{\nabla}_{\gamma(0)} \longrightarrow (M^{an})^{\nabla}_{\gamma(1)},
 \qquad v \mapsto v'.
\]
Following
\cite[(2.10)]{BMR}, recall how we obtain a homomorphism of $\pi_1$ to $W$:
Note that, for a loop $\sigma \in \pi_1(\frh_{W}^{reg} / W, \bar{p}_0)$ and 
a point $p \in W p_0$,
we have a unique path ${}^p \widetilde{\sigma} : [0,1] \longrightarrow \frh_{W}^{reg}$
such that ${}^p\widetilde{\sigma}(0) = p$ and its image in $\frh_{W}^{reg} / W$ coincides
with $\sigma$.
The path $\widetilde{\sigma}$ in $\frh^{reg}$ is called a lift of
the element $\sigma \in \pi_1(\frh^{reg}/W, \bar{p}_0)$. 
As \cite{BMR}, we describe elements of $\pi_1(\frh^{reg}_{W} / W, \bar{p}_0)$
by their lifts (see \cite[Appendix A]{BMR}).
For the above loop $\sigma$, we set $\overline{\sigma} = w \in W$ where $w$ is an
element with ${}^p \widetilde{\sigma}(1) = w(p)$. Then,
 we have a homomorphism of groups (\cite[(2.10)]{BMR})
\[
 \overline{(-)} : \pi_1(\frh_{W}^{reg}/W, \bar{p}_0)^{\operatorname{opp}} \longrightarrow W.
\]
Now we have an action of the braid group given by monodromy
\[
 \widetilde{T}_M: \pi_1(\frh_{W}^{reg} / W, \bar{p}_0)^{\operatorname{opp}} \longrightarrow GL((M^{an})^{\nabla}_{p_0}),
\qquad \sigma \mapsto S_M({}^{p_0}\widetilde{\sigma}^{-1}) \overline{\sigma}.
\]
By \cite[Theorem~4.12]{BMR} and \cite[Theorem 5.13]{GGOR}, 
the
linearly extended
homomorphism $\widetilde{T}_M$ factors through an
algebra homomorphism $\widetilde{T}_M : \sH(W) \longrightarrow \gEnd_{\C}((M^{an})_{p_0}^{\nabla})$,
and thus $\KZ_W(M)$ is equipped with the action of the Hecke algebra $\sH(W)$.
Then we have the functor 
\begin{align*}
\KZ_W : \cO(W) \ra \sH(W)\cmod, 
\quad 
M \mapsto \KZ_W(M).
\end{align*}
By \cite[Section 5.4]{GGOR}, we have the following proposition: 


\begin{prop}
\label{Prop KZ}
\begin{enumerate} 
\item 
There exists a projective object $P_{\KZ_W} \in \cO$ such that 
$\sH (W) \cong \operatorname{End}_{\cO} (P_{\KZ_W})^{\operatorname{opp}}$ as algebras. 

\item 
We have the isomorphism of functors 
$\KZ_W \cong \operatorname{Hom}_{\cO}(P_{\KZ_W}, -)$. 

\item 
$\KZ_W$ is fully faithful on projective objects. 

\end{enumerate} 
\end{prop}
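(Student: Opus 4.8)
The plan is to reproduce the argument of \cite[\S5.3--5.4]{GGOR}, whose engine is the \emph{exactness} of $\KZ_W$. First I would check exactness: for $M \in \cO(W)$ the module $M$ is finitely generated over $\C[\frh]$, so $M \mapsto M^{an}$ is exact, being the composite of localization along the flat map $\C[\frh] \to \C[\frh_W^{reg}]$ with the flat analytification $\C[\frh_W^{reg}] \to \cO^{an}_{\frh_W^{reg}}$; the coherent $\cO^{an}_{\frh_W^{reg}}$-module $M^{an}$ carries an integrable connection, hence is locally free, and $M^{an} \mapsto (M^{an})^{\nabla}$ is exact by the analytic Riemann--Hilbert correspondence on the smooth variety $\frh_W^{reg}$; finally, taking the stalk at $p_0$ is exact. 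Hence $\KZ_W$ is exact and $\dim_\C \KZ_W(M) = \operatorname{rank}_{\C[\frh]} M < \infty$.

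For (ii) I would argue by representability. By \cite{GGOR}, $\cO(W)$ has a projective generator $\mathcal P$, the algebra $A := \End_\cO(\mathcal P)^{\operatorname{opp}}$ is finite-dimensional, and $N \mapsto \Hom_\cO(\mathcal P, N)$ is an equivalence $\cO(W) \xrightarrow{\ \sim\ } A\cmod$. Transporting $\KZ_W$ through it yields a right exact functor $A\cmod \to \sH(W)\cmod$, which by the Eilenberg--Watts theorem equals $B_0 \otimes_A(-)$ for the $(\sH(W),A)$-bimodule $B_0 := \KZ_W(\mathcal P)$. Exactness of $\KZ_W$ makes $B_0$ flat, hence projective, as a right $A$-module; therefore $B_0 \otimes_A(-) \cong \Hom_A(Q,-)$ for a projective left $A$-module $Q$. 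Taking $P_{\KZ_W} \in \cO(W)$ to be the projective object corresponding to $Q$ gives the natural isomorphism $\KZ_W \cong \Hom_\cO(P_{\KZ_W},-)$ of (ii); comparing $\dim_\C\Hom_\cO(P_{\KZ_W},L(\lambda))$ with the multiplicity of $P(\lambda)$ then shows $P_{\KZ_W} \cong \bigoplus_\lambda P(\lambda)^{\oplus \dim_\C\KZ_W(L(\lambda))}$, so $P_{\KZ_W}$ is uniquely pinned down.

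For (i): evaluating the isomorphism of (ii) at $N = P_{\KZ_W}$ identifies $\End_\cO(P_{\KZ_W})^{\operatorname{opp}}$ with $\KZ_W(P_{\KZ_W})$ as vector spaces, compatibly with the canonical $\sH(W)$-action on the latter, so it remains to see that the resulting algebra homomorphism $\sH(W) \to \End_\cO(P_{\KZ_W})^{\operatorname{opp}}$ is bijective. It is injective because $\sH(W)$ acts faithfully on $\bigoplus_M \KZ_W(M)$, equivalently on $\KZ_W(\mathcal P)$, by its monodromy construction \cite{GGOR,BMR}. For surjectivity I would compare dimensions: $\dim_\C\sH(W) = |W|$ by \cite{GGOR} (for $W=W_{n,r}$, the rank of the Ariki--Koike algebra), while a standard count using the exactness of $\KZ_W$, the identity $\operatorname{rank}_{\C[\frh]}\Delta(\lambda) = \dim\lambda$ and BGG reciprocity in the highest weight category $\cO(W)$ gives $\dim_\C \KZ_W(P_{\KZ_W}) = \operatorname{rank}_{\C[\frh]} P_{\KZ_W} = \sum_\lambda(\dim\lambda)^2 = |W|$.

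For (iii): by (i) and (ii), $\KZ_W$ is the Schur functor $\Hom_\cO(P_{\KZ_W},-)$ attached to the projective object $P_{\KZ_W}$, and such a functor is fully faithful on projective objects precisely when the double centralizer property holds --- equivalently, when $\mathcal P$ admits a copresentation $0 \to \mathcal P \to P_0 \to P_1$ with $P_0, P_1 \in \operatorname{add}(P_{\KZ_W})$. I would establish this via Rouquier's faithfulness criterion, using that each standard object $\Delta(\lambda)$ is torsion-free over $\C[\frh]$ (hence not killed by $\KZ_W$), that projective objects of $\cO(W)$ are standardly filtered, and that the $\KZ_W$-torsion objects form a Serre subcategory compatible with the highest weight order; splicing together the $\operatorname{add}(P_{\KZ_W})$-presentations along a $\Delta$-filtration of $\mathcal P$ then yields the copresentation. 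I expect this double-centralizer step, which genuinely uses the highest-weight combinatorics of $\cO(W)$, to be the main obstacle; by contrast, once exactness of $\KZ_W$ and the value $\dim_\C\sH(W)=|W|$ are available, (i) and (ii) are essentially formal.
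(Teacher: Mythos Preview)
The paper gives no argument here: it simply cites \cite[Theorem~5.15]{GGOR} for (i) and (ii) and \cite[Theorem~5.16]{GGOR} together with \cite[Proposition~4.33]{Rouq} for (iii). Your proposal is a faithful and essentially correct sketch of the proofs in those references---exactness of $\KZ_W$, representability via Eilenberg--Watts for (ii), the rank computation $\operatorname{rank}_{\C[\frh]} P_{\KZ_W}=\sum_\lambda(\dim\lambda)^2=|W|$ for (i), and the double-centralizer criterion of Rouquier for (iii).

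One point in (i) needs tightening. Knowing $\dim_\C\sH(W)=|W|=\dim_\C\End_\cO(P_{\KZ_W})^{\operatorname{opp}}$ does not by itself make the natural map an isomorphism, and ``$\sH(W)$ acts faithfully on $\bigoplus_M\KZ_W(M)$'' is precisely the injectivity of this map, so it cannot be used as an independent input (the Hecke algebra is defined by the BMR presentation, not as the monodromy image). In \cite{GGOR} this step is handled by a flatness/deformation argument over the parameter ring: at the generic point $\cO(W)$ is semisimple, $\KZ_W$ is an equivalence, and the map is visibly an isomorphism; since both sides are free of rank $|W|$ over the parameter ring, the isomorphism specializes. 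With that adjustment your argument goes through, and your treatment of (ii) and (iii) matches the cited proofs.
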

\begin{proof} 
(\roi) and (\roii) were proved in \cite[Theorem 5.15]{GGOR}. 
(\roiii) follows from \cite[Theorem 5.16]{GGOR} and \cite[Proposition 4.33]{Rouq}. 
\end{proof}

\para 
For a parabolic subgroup $W'$ of $W$, 
Bezrukavnikov and Etingof introduced the functors of parabolic restriction 
$\ORes^{W}_{W'}$ and induction $\OInd^{W}_{W'}$ for modules of the category $\cO$
in \cite{BE}. They are exact functors 
$\ORes^{W}_{W'} : \cO(W) \longrightarrow \cO(W')$,
$\OInd^{W}_{W'} : \cO(W') \longrightarrow \cO(W)$ between the categories $\cO$
for the rational Cherednik algebras $H(W)$ and $H(W')$. 
For these functors $\ORes^{W}_{W'}$ and $\OInd^{W}_{W'}$, 
we have the following properties. 

\begin{prop}
\label{prop:ORes-KZ-props}
For  a parabolic subgroup $W'$ of $W$, we have
\begin{enumerate}
 \item The functors $\ORes^{W}_{W'}$, $\OInd^{W}_{W'}$ are exact,
       and they send projective objects to projective objects.
 \item There exists an isomorphism of functors 
       $\KZ_{W'} \circ \ORes^{W}_{W'} \cong \HRes^{W}_{W'} \circ \KZ_W$.
 \item There exists an isomorphism of functors 
       $\KZ_{W} \circ \OInd^{W}_{W'} \cong \HInd^{W}_{W'} \circ \KZ_{W'}$.
\end{enumerate}
\begin{proof}
 (i)
 By \cite[Proposition~3.9]{BE}, the functors $\ORes^{W}_{W'}$, $\OInd^{W}_{W'}$ are
 exact.  These functors are biadjoint with each others by the consequence
 of \cite[Theorem~3.10]{BE} and \cite[Proposition~2.9]{Shan} (see also \cite{Losev}).
 By \cite[Proposition~2.3.10]{Weibel}, an additive functor being left adjoint to an
 exact functor sends projectives to projectives. Thus, $\ORes^{W}_{W'}$ and
 $\OInd^{W}_{W'}$ send projectives to projectives.

 (ii) This is stated in \cite[Theorem~2.1]{Shan}.  (iii) This is also
 stated in \cite[Corollary~2.3]{Shan}.
\end{proof}
\end{prop}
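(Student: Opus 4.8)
The plan is to separate the purely formal part of the statement, namely exactness and the preservation of projectives in (i), from its geometric part, namely the compatibility of parabolic restriction with the $\KZ$ functor in (ii); part (iii) will then be obtained from (ii) by an adjunction argument.

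For (i), I would first recall how $\ORes^{W}_{W'}$ is built in \cite{BE}: it is a composite of completion along a generic point of the $W'$-fixed subspace of $\frh$, an equivalence coming from the Bezrukavnikov--Etingof isomorphism of the corresponding completed Cherednik algebras, and passage to the full subcategory cut out by a local-nilpotency condition; each of these operations is exact, which is the content of \cite[Proposition~3.9]{BE}, and the same applies to $\OInd^{W}_{W'}$. Next I would invoke biadjointness: $\OInd^{W}_{W'}$ is left adjoint to $\ORes^{W}_{W'}$ by \cite[Theorem~3.10]{BE}, and it is also right adjoint to $\ORes^{W}_{W'}$ by \cite[Proposition~2.9]{Shan} (alternatively \cite{Losev}). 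Since a left adjoint of an exact functor between abelian categories with enough projectives preserves projectives \cite[Proposition~2.3.10]{Weibel}, applying this to the adjoint pair $(\OInd^{W}_{W'},\ORes^{W}_{W'})$ (using exactness of $\ORes^{W}_{W'}$) shows $\OInd^{W}_{W'}$ preserves projectives, and applying it to $(\ORes^{W}_{W'},\OInd^{W}_{W'})$ (using exactness of $\OInd^{W}_{W'}$) shows $\ORes^{W}_{W'}$ preserves projectives. This settles (i).

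For (ii), the real input is Shan's comparison of the Bezrukavnikov--Etingof functor with the localization to $\frh_{W}^{reg}$ that underlies $\KZ$. I would recall that $\KZ_W$ factors through $M\mapsto M^{an}$ on $\frh_{W}^{reg}$, the passage to horizontal sections, and the monodromy action, and that near a generic point of the $W'$-fixed subspace the functor $\ORes^{W}_{W'}$ is, up to the relevant identifications, an honest restriction operation on these flat bundles; under this local model the monodromy around the hyperplanes belonging to $W'$ is carried to the monodromy defining $\KZ_{W'}$, while the braid group of $W'$ embeds compatibly into that of $W$ so that the inclusion of Hecke algebras realizing $\HRes^{W}_{W'}$ matches up. Tracking this through produces the natural isomorphism $\KZ_{W'}\circ\ORes^{W}_{W'}\cong\HRes^{W}_{W'}\circ\KZ_W$; this is exactly \cite[Theorem~2.1]{Shan}, which I would use as a black box.

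For (iii), I would deduce the induction version from (ii). The argument, carried out in \cite[Corollary~2.3]{Shan}, confronts the isomorphism of (ii) with the biadjunctions $(\OInd^{W}_{W'},\ORes^{W}_{W'})$ and $(\HInd^{W}_{W'},\HRes^{W}_{W'})$ from (i) and its Hecke-algebra analogue, together with the facts that $\KZ_W$ is a quotient functor represented by a projective object and that it is fully faithful on projective objects (Proposition~\ref{Prop KZ}); concretely one first checks the desired isomorphism on projective objects $P\in\cO(W')$, where it is forced by (ii) and these adjunctions, and then extends it to all of $\cO(W')$ by taking projective presentations and using right-exactness of $\OInd^{W}_{W'}$ and $\HInd^{W}_{W'}$. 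So I would simply invoke \cite[Corollary~2.3]{Shan}. The step I expect to carry all the weight is (ii): every piece of honest geometry, the local model for $\ORes^{W}_{W'}$ and the behaviour of monodromy, is concentrated there, and in the present paper it is used purely as an input from \cite{Shan}.
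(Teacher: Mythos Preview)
Your proposal is correct and follows essentially the same approach as the paper: for (i) you invoke exactness from \cite[Proposition~3.9]{BE}, biadjointness from \cite[Theorem~3.10]{BE} and \cite[Proposition~2.9]{Shan} (or \cite{Losev}), and then \cite[Proposition~2.3.10]{Weibel}; for (ii) and (iii) you cite \cite[Theorem~2.1]{Shan} and \cite[Corollary~2.3]{Shan} respectively. The additional geometric and adjunction-theoretic commentary you provide is accurate background, but the paper's own proof simply records the same citations without elaboration.
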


\para 
\label{twist functor}
For a parabolic subgroup $W'$ of $W$ and an element $x \in W$,
we have a $\C$-algebra isomorphism $\theta_{W'}^{(x)} : H(x W' x^{-1}) \longrightarrow H(W' )$
given by $f \mapsto x^{-1} f x$ for $f \in H( x W' x^{-1})$. 
We define a functor 
\begin{align*} 
\Theta^{(x)}_{W'} : \cO( W' ) \ra \cO( x W' x^{-1}), 
\quad 
M \mapsto M^{\theta^{(x)}_{W'}}, 
\end{align*} 
where $M^{\theta^{(x)}_{W'}} =M$ as vector spaces and the action is twisted by $\theta^{(x)}_{W'}$. 
From definitions, 
we have the following proposition:  


\begin{prop}
\label{Prop Theta}
For  parabolic subgroups $W', W''$ of $W$  with $W'' \subset W'$ and elements $x,y \in W$,
we have the following $:$
\begin{enumerate} 
\item 
$\Theta^{(x^{-1})}_{x W' x^{-1}}$ is a quasi-inverse of $\Theta^{(x)}_{W'} $. 

\item 
We have the isomorphism of functors 
$\Theta^{(x)}_{y W' y^{-1}} \circ \Theta^{(y)}_{W'} \cong \Theta^{(xy)}_{W'}$. 

\item 
\begin{enumerate} 
\item 
We have the isomorphism of functors 
$\Theta^{(x)}_{W'} \circ \ORes^{W}_{W'} \cong \ORes^{W}_{x W' x^{-1}}$. 

\item 
We have the isomorphism of functors 
$\OInd^{W}_{W'} \circ \Theta^{(x^{-1})}_{x W' x^{-1}} \cong \OInd^{W}_{x W' x^{-1}}$. 

\end{enumerate} 

\item 
\begin{enumerate} 
\item 
We have the isomorphism of functors 
$\Theta^{(x)}_{W''} \circ \ORes^{W'}_{W''} \cong \ORes^{x W' x^{-1}}_{x W'' x^{-1}} \circ \Theta^{(x)}_{W'}$. 

\item 
We have the isomorphism of functors 
$\Theta^{(x)}_{W''} \circ \OInd^{W'}_{W''} \cong \OInd^{x W' x^{-1}}_{x W'' x^{-1}} \circ \Theta^{(x)}_{W'}$. 
\end{enumerate}
\end{enumerate}
\end{prop}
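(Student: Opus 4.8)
The plan is to split the four assertions into two groups. Items (i) and (ii) are purely algebraic: they only require tracking how the $H(-)$-action is transported under $\theta^{(x)}_{W'}$. Items (iii) and (iv) concern the Bezrukavnikov--Etingof functors; I would prove the restriction halves (iii)(a), (iv)(a) by observing that the construction of $\ORes$ in \cite{BE} is $W$-equivariant, and then deduce the induction halves (iii)(b), (iv)(b) by biadjointness combined with (i).

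First I would dispose of (i) and (ii) by unravelling the definition of the twist. For $M\in\cO(W')$ the module $M^{\theta^{(x)}_{W'}}$ has underlying space $M$, with $h\in H(xW'x^{-1})$ acting by $m\mapsto(x^{-1}hx)\cdot m$. Applying a second twist, an element $g\in H(W')$ acts on $(M^{\theta^{(x)}_{W'}})^{\theta^{(x^{-1})}_{xW'x^{-1}}}$ by $m\mapsto x^{-1}(xgx^{-1})x\cdot m=g\cdot m$, so $\Theta^{(x^{-1})}_{xW'x^{-1}}\circ\Theta^{(x)}_{W'}$ is literally the identity functor (and symmetrically for the other composite), which gives (i). For (ii), an element $h\in H(xyW'(xy)^{-1})$ acts on $(M^{\theta^{(y)}_{W'}})^{\theta^{(x)}_{yW'y^{-1}}}$ by $m\mapsto y^{-1}(x^{-1}hx)y\cdot m=((xy)^{-1}h(xy))\cdot m$, which is exactly the $\theta^{(xy)}_{W'}$-twist; hence $\Theta^{(x)}_{yW'y^{-1}}\circ\Theta^{(y)}_{W'}\cong\Theta^{(xy)}_{W'}$.

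Next, (iii)(a). I would recall that $\ORes^W_{W'}$ is built in \cite{BE} (see also \cite{Shan}, \cite{Losev}) from a choice of point $b\in\frh$ with $W$-stabilizer $W'$: one completes $H(W)$ along the orbit $Wb$, cuts by the corresponding idempotent, and transports through the Bezrukavnikov--Etingof isomorphism onto a matrix algebra over the completion of $H(W')$ at the origin; the resulting functor is independent, up to canonical isomorphism, of the choice of $b$. Now the linear automorphism $x\colon\frh\to\frh$ sends $b$ to $xb$, whose $W$-stabilizer is $xW'x^{-1}$, and conjugation by $x$ inside $\calD(\frh_W^{reg})\rtimes W$ carries the completion along $Wb$ to the completion along $W(xb)$, the orbit idempotent of $b$ to that of $xb$, and one Bezrukavnikov--Etingof isomorphism to the other; moreover this conjugation restricted to the subalgebra $H(W')$ is precisely $\theta^{(x^{-1})}_{xW'x^{-1}}$. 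Comparing the construction of $\ORes^W_{xW'x^{-1}}$ carried out at $xb$ with the construction of $\ORes^W_{W'}$ at $b$ followed by $\Theta^{(x)}_{W'}$ yields the natural isomorphism $\Theta^{(x)}_{W'}\circ\ORes^W_{W'}\cong\ORes^W_{xW'x^{-1}}$. The identical argument applied to the reflection group $W'$ acting on $\frh$, with a point whose $W'$-stabilizer is $W''$ --- note that $x\in W$ need not lie in $W'$, but the linear map $x$ still conjugates the whole picture --- gives (iv)(a).

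Finally, (iii)(b) and (iv)(b) follow formally. Since $\OInd$ and $\ORes$ are biadjoint (Proposition~\ref{prop:ORes-KZ-props}(i)) and, by (i), $\Theta^{(x^{-1})}_{xW'x^{-1}}$ is a two-sided adjoint of $\Theta^{(x)}_{W'}$, taking left adjoints in the isomorphism of (iii)(a) yields $\OInd^W_{W'}\circ\Theta^{(x^{-1})}_{xW'x^{-1}}\cong\OInd^W_{xW'x^{-1}}$, which is (iii)(b); taking left adjoints in (iv)(a) and relabelling $x\mapsto x^{-1}$, $W'\mapsto xW'x^{-1}$, $W''\mapsto xW''x^{-1}$ yields (iv)(b). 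The only step carrying genuine content is the $W$-equivariance of the restriction functor, and there the one thing to be careful about is the bookkeeping: matching the two towers of completions, orbit idempotents and Bezrukavnikov--Etingof isomorphisms along the automorphism $x$, and checking compatibility with the independence-of-$b$ statement. This is a naturality check rather than a new idea, so I do not expect a serious obstacle.
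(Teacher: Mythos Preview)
Your proposal is correct and follows the same strategy as the paper's proof: items (i) and (ii) are unwound directly from the definition of the twist, items (iii)(a) and (iv)(a) come from checking that conjugation by $x$ transports the Bezrukavnikov--Etingof restriction construction, and items (iii)(b) and (iv)(b) are deduced from the restriction halves by uniqueness of adjoints. The paper's proof is considerably terser (``clear from the definition'', ``direct calculation'', ``uniqueness of the adjoint functors''), so your write-up simply fills in what the authors left implicit.
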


\begin{proof}
(\roi) and (\roii) are clear from the definition. 
We can check (\roiii) (a) (resp. (\roiv) (a)) from definitions by direct calculation. 
(\roiii) (b) (resp. (\roiv) (b)) follows from (\roiii) (a) (resp. (\roiv) (a))  with the uniqueness of the adjoint functors.  
\end{proof}

Note also that, for a module $M \in \cO(W')$ and a point $p \in \frh^{reg}_{W'}$, 
the functor $\Theta^{(x)}_{W'}$ induces an isomorphism
$\widehat{\Theta}^{(x)}_{W'} : (M^{an})_{p}^{\nabla} \longrightarrow (\Theta^{(x)}_{W'} M^{an})_{p}^{\nabla} \simeq (M^{an})_{x(p)}^{\nabla}$ of vector spaces, and we have
the following commutative diagram:
\[
\xymatrix{
 (M^{an})_{\gamma(0)}^{\nabla} \ar[rr]^{S_M(\gamma)} \ar[d]_{\widehat{\Theta}^{(x)}_{W'}} 
 & & (M^{an})_{\gamma(1)}^{\nabla} \ar[d]^{\widehat{\Theta}^{(x)}_{W'}}\\
 (M^{an})_{x(\gamma)(0)}^{\nabla} \ar[rr]_{S_{\Theta^{(x)}_{W'}(M)}(x(\gamma))} & & (M^{an})_{x(\gamma)(1)}^{\nabla}
}
\]
for a path $\gamma$ in $\frh_{W'}^{reg}$. Here $x(\gamma)$ is the path given by 
$x(\gamma)(t) = x(\gamma(t))$.

Before closing this section, we introduce a way to let the Mackey decomposition functor between
module categories over Hecke algebras lift to the functor between categories
$\cO$ over rational Cherednik algebras.

\begin{prop}\label{prop:H-Mackey-RCA}
Let $W_a,W_b$  be standard parabolic subgroups of $W$.
  Suppose that there is a subset ${}^a D^b$ of $W$ and elements
  $\{T_u \in \sH(W)~|~u \in {}^a D^b\}$ such that
\begin{align}\label{H Assm Mackey}
\HRes_{W_a}^{W} \circ \HInd_{W_b}^{W} 
\cong 
\bigoplus_{u \in \,^{a}D^{b}} 
	\HInd^{W_{a}}_{W_a \cap u W_b u^{-1}} \circ T_u(-) \circ
 \HRes^{W_b}_{u^{-1} W_a u \cap W_b}
\end{align}
and
both  $W_a \cap u W_b u^{-1}$ and $u^{-1}W_a u \cap  W_b $ are standard for any $u \in {}^a D^b$ .
If for any $u \in {}^a D^b$
\begin{equation}\label{comKZ}
  \KZ_{W_a \cap u  W_b u^{-1}} \circ \Theta^{(u)}_{u^{-1}W_a u \cap W_b}
  \cong
  T_u (-) \circ \KZ_{u^{-1} W_a u \cap W_b},
\end{equation}
  then
  we have the following isomorphism of functors $:$
 \[
  \ORes^{W}_{W_a} \circ \OInd^{W}_{W_b}
 \cong \bigoplus_{u \in {}^a D^b} 
 \OInd^{W_a}_{W_a \cap u W_b u^{-1}} \circ\, \Theta^{(u)}_{u^{-1}W_a u \cap W_b} \circ
 \ORes^{W_b}_{u^{-1} W_a u \cap W_b}.
 \]
 \end{prop}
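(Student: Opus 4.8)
The plan is to reduce the statement about $\ORes^{W}_{W_a} \circ \OInd^{W}_{W_b}$ to the already-established Hecke-algebra Mackey formula \eqref{H Assm Mackey} by means of the double centralizer property of the $\KZ$ functor (Proposition~\ref{Prop KZ}) together with the compatibility of $\ORes$, $\OInd$ with $\HRes$, $\HInd$ through $\KZ$ (Proposition~\ref{prop:ORes-KZ-props}). Concretely, both sides of the desired isomorphism are exact functors $\cO(W_b) \to \cO(W_a)$ that send projectives to projectives: on the left this is immediate from Proposition~\ref{prop:ORes-KZ-props}(i); on the right one uses that each $\OInd^{W_a}_{W_a \cap u W_b u^{-1}}$ and $\ORes^{W_b}_{u^{-1}W_a u \cap W_b}$ sends projectives to projectives (Proposition~\ref{prop:ORes-KZ-props}(i)) and that $\Theta^{(u)}_{u^{-1}W_a u \cap W_b}$ is an equivalence (Proposition~\ref{Prop Theta}(i)), so preserves projectives trivially. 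Hence, by the lifting machinery recorded in Appendix~A (the general double-centralizer argument), it suffices to produce an isomorphism of the two functors \emph{after applying} $\KZ_{W_a}$, i.e. to check the identity on the level of module categories over the Hecke algebras.

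The key computation is therefore to evaluate $\KZ_{W_a}$ on both sides. On the left, Proposition~\ref{prop:ORes-KZ-props}(ii),(iii) gives
\[
\KZ_{W_a} \circ \ORes^{W}_{W_a} \circ \OInd^{W}_{W_b}
\cong \HRes^{W}_{W_a} \circ \KZ_W \circ \OInd^{W}_{W_b}
\cong \HRes^{W}_{W_a} \circ \HInd^{W}_{W_b} \circ \KZ_{W_b},
\]
which by the Hecke Mackey formula \eqref{H Assm Mackey} is isomorphic to
$\bigoplus_{u} \HInd^{W_a}_{W_a \cap u W_b u^{-1}} \circ T_u(-) \circ \HRes^{W_b}_{u^{-1}W_a u \cap W_b} \circ \KZ_{W_b}$.
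On the right, I would apply $\KZ_{W_a}$ to the $u$-th summand and push it inward: since $W_a \cap u W_b u^{-1}$ is standard (by hypothesis), Proposition~\ref{prop:ORes-KZ-props}(iii) gives $\KZ_{W_a}\circ \OInd^{W_a}_{W_a \cap u W_b u^{-1}} \cong \HInd^{W_a}_{W_a \cap u W_b u^{-1}} \circ \KZ_{W_a \cap u W_b u^{-1}}$; then the crucial hypothesis \eqref{comKZ} replaces $\KZ_{W_a \cap u W_b u^{-1}} \circ \Theta^{(u)}_{u^{-1}W_a u \cap W_b}$ by $T_u(-)\circ \KZ_{u^{-1}W_a u \cap W_b}$; and finally Proposition~\ref{prop:ORes-KZ-props}(ii) (again using that $u^{-1}W_a u \cap W_b$ is standard) gives $\KZ_{u^{-1}W_a u \cap W_b}\circ \ORes^{W_b}_{u^{-1}W_a u \cap W_b} \cong \HRes^{W_b}_{u^{-1}W_a u \cap W_b}\circ \KZ_{W_b}$. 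Summing over $u$, the two $\KZ_{W_a}$-images match, as desired.

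The main obstacle is not any single step above — each is a one-line application of a cited result — but rather the bookkeeping needed to make the double-centralizer lifting in Appendix~A actually apply: one must verify that the natural isomorphism obtained after applying $\KZ_{W_a}$ is compatible with the module structures and with the chain of adjunctions, so that it descends to a genuine isomorphism of functors on $\cO$ and not merely on the images under $\KZ_{W_a}$. In other words, the care is in checking the hypotheses of the Appendix~A statement (full faithfulness of $\KZ$ on projectives, exactness, projective-preservation) for \emph{both} composite functors simultaneously, and in noting that an isomorphism of two projective-preserving exact functors $F,G\colon \cO(W_b)\to\cO(W_a)$ is determined by an isomorphism $\KZ_{W_a}F\cong\KZ_{W_a}G$ — this is where Proposition~\ref{Prop KZ}(iii) is used. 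I would present the argument by first stating the reduction as a lemma-style invocation of Appendix~A, then carrying out the two $\KZ_{W_a}$-computations side by side in a single displayed chain of isomorphisms.
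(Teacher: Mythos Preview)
Your proposal is correct and follows essentially the same approach as the paper: you set up the two functors on $\cO$, verify via Proposition~\ref{prop:ORes-KZ-props} and the hypothesis~\eqref{comKZ} that their compositions with $\KZ_{W_a}$ match (through the Hecke Mackey formula~\eqref{H Assm Mackey}), and then invoke the lifting proposition from Appendix~A together with Proposition~\ref{Prop KZ} and Proposition~\ref{prop:ORes-KZ-props}(i) to conclude. The paper's proof is more terse but uses exactly the same ingredients in the same order.
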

\begin{proof}
Set
\begin{align*}
 K &= \ORes^{W}_{W_a} \circ \OInd^{W}_{W_b},  
 \\
 L &=
 \OInd^{W_a}_{W_a \cap u W_b u^{-1}} \circ\, \Theta^{(u)}_{u^{-1}W_a u \cap W_b} \circ
 \ORes^{W_b}_{u^{-1} W_a u \cap W_b},\\
 ^{\sH}K &= \HRes^{W}_{W_a} \circ \HInd^{W}_{W_b},  
 \\
 ^{\sH}L &= \bigoplus_{u \in {}^a D^b}
 \HInd^{W_a}_{W_a \cap u W_b u^{-1}} \circ\, T_u (-) \circ
 \HRes^{W_b}_{u^{-1} W_a u \cap W_b}.
\end{align*}
By Proposition \ref{prop:ORes-KZ-props} (\roii), (\roiii) and the assumption (\ref{comKZ}), 
we see that 
$\KZ_{W_a} \circ K \cong \,^{\sH}K \circ \KZ_{W_b}$ 
and 
$\KZ_{W_a} \circ L \cong \,^{\sH}L \circ \KZ_{W_b}$.
Then 
the proposition follows from The assumption (\ref{H Assm Mackey}), 
Proposition \ref{Prop lifting} together with Proposition \ref{Prop KZ} 
and Proposition \ref{prop:ORes-KZ-props} (\roi). 
\end{proof}



\section{The Mackey formula for the categories $\cO$ of rational
Cherednik algebras of type $G(r,1,n)$} 
\label{SectMackey-O-Wnr}
In this section, we discuss the Mackey formula for the categories $\cO$ of the rational
Cherednik algebras associated with the complex reflection group $W_{n,r}$
of type $G(n, 1, r)$.

\para 
Consider the parabolic subgroups $W_{(l, \mu)}$ and $W_{(m, \nu)}$  of $W_{n,r}$.
For a double coset representative 
$u = x \prod_{i \in I(x)} t_i^{a_i} \in {}^{(l,\mu)} W^{(m, \nu)}$,
we have $W_{(l,\mu)} \cap u W_{(m, \nu)} u^{-1} = W_{(k(u), \pi(u))}$ by 
Proposition~\ref{W Prop Wkupiu}, and 
$u^{-1} W_{(k(u), \pi(u))} u = W_{(k(u), \pi^{\sharp}(u))}$ by \eqref{W u-1 Wkupiu u}.
Recall that we denote by $X_{(k(u), \pi(u))} \subset \{s_0, s_1, \dots, s_{n-1}\}$ 
(resp. $X_{(k(u), \pi^{\sharp}(u))}$) 
the set of standard generators of the parabolic subgroup $W_{(k(u), \pi(u))}$
(resp. $W_{(k(u), \pi^{\sharp}(u))}$).
We sometimes denote the functor $\Theta^{(u)}_{W_{(k(u), \pi^{\sharp}(u))}}$ by $u(-)$ 
and also denote $M^{\theta^{(u)}_{W_{(k(u), \pi^{\sharp}(u))}}}$ by $uM$ when 
we need not notify the subgroup $W_{(k(u), \pi^{\sharp}(u))}$.

\para
Let $\frh = \C^n$ be the reflection representation of the complex reflection group $W_{n,r}$
and fix a point $p_0 \in \frh^{reg}_{W_{n,r}}$.
Let $B_{n,r} = \pi_1(\frh^{reg}_{W_{n,r}} / W_{n,r}, \bar{p}_0)$ be the fundamental group
of $\frh^{reg}_{W_{n,r}} / W_{n,r}$, the braid group associated with $W_{n,r}$.
For $j=0$, $1$, $\dots$, $n-1$, we fix a generator 
$\sigma_j \in B_{n,r}$
of the braid group given in \cite[\S2B]{BMR} such that $\overline{\sigma}_j = s_j$.
Then the image of $\sigma_0$, $\dots$, $\sigma_{n-1}$ in $\sH_{n,r} = \sH(W)$ 
are $T_0$, $\dots$, $T_{n-1} \in \sH_{n,r}$, the generators of the Hecke algebra
$\sH_{n,r}$ which we introduced in \S\ref{H Section}. 
For $i=1$, $\dots$, $n$, we set 
$\gamma_i = \sigma_{i-1} \sigma_{i-2} \dots \sigma_1 \sigma_0 \sigma_1 \dots \sigma_{i-1}$,
an element in $B_{n,r}$. Then, its image in $\sH_{n,r}$
is $q^{i-1}L_i$ and we have $\overline{\gamma}_i = t_i$. Note that these elements $\gamma_1$, $\dots$,
$\gamma_n$ mutually commute since the commutativity of $t_1$, $\dots$, $t_n \in W_{n,r}$ 
is obtained only by using the braid relations.
For the double coset representative $u = x \prod_{i \in I(x)} t_i^{a_i} \in {}^{(l,\mu)}W^{(m,\nu)}$, we consider
an element 
$\omega = \bigl(\prod_{i \in I(x)} \gamma_i^{a_i} \bigr) \sigma_{i_l} \dots \sigma_{i_1}\in B_{n,r}$
where $x = s_{i_1} \dots s_{i_l}$ is a reduced expression of $x \in \fS_n$. Then, 
we have $\overline{\omega} = u$.  
By Proposition \ref{W prop only braid}, 
for $s_j \in X_{(k(u), \pi(u))}$, 
there exists $s_{\psi(j)} \in X_{(k(u), \pi^{\sharp}(u))}$ 
such that $s_j (s_{i_1} s_{i_2} \dots s_{i_l} \prod_{i \in I(x)} t_i^{a_i}) = (s_{i_1} s_{i_2} \dots s_{i_l} \prod_{i \in I(x)} t_i^{a_i}) s_{\psi(j)}$, 
and this identity implies the identity 
\begin{equation}
 \label{eq:braid us_j = s_j'u}
  \omega \sigma_j = \sigma_{\psi(j)} \omega
\end{equation}
in the braid group $B_{n,r}$.

By the embedding of \cite[\S2D]{BMR}, we identify the braid group $B_{(k(u), \pi(u))}$
(resp. $B_{(k(u), \pi^{\sharp}(u))}$) associated with the parabolic subgroup 
$W_{(k(u), \pi(u))}$ (resp. $W_{(k(u), \pi^{\sharp}(u))}$) with the subgroup 
of $B_{n,r}$ generated by the standard generators 
$\{ \sigma_j \,|\, s_j \in X_{(k(u), \pi(u))}\}$
(resp. $\{ \sigma_j \,|\, s_j \in X_{(k(u), \pi^{\sharp}(u))}\}$).
See also \cite[Section~2.2]{Shan} for the embedding of parabolic subgroups.

\begin{prop}
 \label{prop:twist-KZ}
 For a double coset representative $u \in {}^{(l,\mu)} W^{(m, \nu)}$ and
 a module $M \in \cO(W_{(k(u), \pi^{\sharp}(u))})$, 
 we have the following isomorphism of functors $:$
\begin{align*} 
\KZ_{W_{(k(u), \pi(u))}} \circ\, u (-) \cong T_u (-) \circ \KZ_{W_{(k(u), \pi^{\sharp}(u))}}. 
\end{align*}
\end{prop}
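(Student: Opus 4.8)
The plan is to compare the two functors through their monodromy descriptions, using the fact (Proposition~\ref{W prop only braid}) that the conjugation identity $s_j(s_{i_1}\cdots s_{i_l}\prod t_i^{a_i})=(s_{i_1}\cdots s_{i_l}\prod t_i^{a_i})s_{\psi(j)}$ comes purely from braid relations, so it lifts to the identity \eqref{eq:braid us_j=s_j'u} in $B_{n,r}$. Recall that $\KZ_{W'}(N)$ is the stalk $(N^{an})^{\nabla}_{p_0}$ equipped with the $\sH(W')$-action built from the analytic-continuation operators $S_N$ and the homomorphism $\overline{(-)}$. First I would fix $M\in\cO(W_{(k(u),\pi^{\sharp}(u))})$ and unwind both sides: on the left, $u(M)=M^{\theta^{(u)}}$ is $M$ with the $H(W_{(k(u),\pi(u))})$-action pulled back along $\theta^{(u)}_{W_{(k(u),\pi^{\sharp}(u))}}:H(W_{(k(u),\pi(u))})\to H(W_{(k(u),\pi^{\sharp}(u))})$, $f\mapsto u^{-1}fu$; on the right, $T_u(-)=T_u\sH_{(k(u),\pi^{\sharp}(u))}\otimes_{\sH_{(k(u),\pi^{\sharp}(u))}}-$ twists the $\sH_{(k(u),\pi(u))}$-module structure through the algebra map sending the generator $T_j$ (for $s_j\in X_{(k(u),\pi(u))}$) to $T_{\psi(j)}$ and $L_i$ to $L_i$ for $i\le k(u)$, which is exactly the content of Proposition~\ref{H Prop TuHmnu} and \eqref{W u-1 sj u=x-1 sj x}, \eqref{W u-1 ti u}.

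Next I would use the commutative square displayed at the end of \S\ref{category O}: the functor $\Theta^{(u)}_{W_{(k(u),\pi^{\sharp}(u))}}$ induces an isomorphism of vector spaces $\widehat{\Theta}^{(u)}:(M^{an})^{\nabla}_{p}\to (M^{an})^{\nabla}_{u(p)}$ intertwining $S_M(\gamma)$ with $S_{u(M)}(u(\gamma))$ for paths $\gamma$ in $\frh^{reg}_{W_{(k(u),\pi^{\sharp}(u))}}$. Working at the chosen base points and using the lift $\omega\in B_{n,r}$ of $u$ (with image $q$-power times $\prod L_i^{a_i}$ times $T_x$ in $\sH_{n,r}$), I would show that under the identification of $B_{(k(u),\pi(u))}$ and $B_{(k(u),\pi^{\sharp}(u))}$ with subgroups of $B_{n,r}$ via \cite[\S2D]{BMR}, conjugation by $\omega$ carries the generator $\sigma_j$ (for $s_j\in X_{(k(u),\pi(u))}$) to $\sigma_{\psi(j)}$ by \eqref{eq:braid us_j=s_j'u}, and fixes the $\gamma_i$ with $i\le k(u)$ since the relevant commutation relations are again braid consequences (as noted for $t_1,\dots,t_n$). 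Transporting the monodromy action of $\sigma_{\psi(j)}$ on $\KZ_{W_{(k(u),\pi^{\sharp}(u))}}(M)$ along $\widehat{\Theta}^{(u)}$ therefore produces precisely the action of $\sigma_j$ one gets after applying $T_u(-)$; since $\KZ$ is defined functorially from $S_M$ and $\overline{(-)}$, this matching of generators on a common underlying vector space gives an isomorphism of $\sH_{(k(u),\pi(u))}$-modules, natural in $M$.

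The main obstacle I expect is bookkeeping at the level of base points and the conjugation-of-fundamental-groups formalism of \cite{BMR}: one must be careful that the isomorphism $\widehat{\Theta}^{(u)}$ really does carry the base point $p_0$ (for $W_{(k(u),\pi^{\sharp}(u))}$) to the base point used for $W_{(k(u),\pi(u))}$, up to a path whose contribution is absorbed, and that the ``$\overline{(-)}$'' parts of the monodromy operators $\widetilde{T}_M(\sigma)=S_M({}^{p_0}\widetilde{\sigma}^{-1})\overline{\sigma}$ transform correctly — i.e., that $\overline{\omega\sigma_j\omega^{-1}}=u s_{\psi(j)}u^{-1}=s_j$ matches the group-theoretic side. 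Once the compatibility of the embeddings of the parabolic braid groups into $B_{n,r}$ with the $\KZ$ construction is in hand (this is where \cite[Section~2.2]{Shan} and Proposition~\ref{prop:ORes-KZ-props} (ii),(iii) are used for the analogous parabolic-restriction statements), the identity \eqref{eq:braid us_j=s_j'u} does all the genuine work and the remaining verification is formal; I would organize the proof so that the braid identity is invoked exactly once and everything else is naturality of the localization-and-monodromy construction.
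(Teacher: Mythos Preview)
Your proposal is correct and follows essentially the same route as the paper: both reduce the question to the braid identity \eqref{eq:braid us_j = s_j'u}, transport via $\widehat{\Theta}^{(u)}$, and check $\sH_{(k(u),\pi(u))}$-equivariance on the generators $T_i$. The one point you leave as ``a path whose contribution is absorbed'' is made explicit in the paper by defining the comparison map as $\kappa^{(u)}(T_u\,v)=\widehat{\Theta}^{(u)}\circ S_M\bigl(u^{-1}({}^{u(p_0)}\widetilde{\omega}^{-1})\bigr)(v)$, i.e.\ composing $\widehat{\Theta}^{(u)}$ with analytic continuation along the lift of $\omega^{-1}$ starting at $u(p_0)$; the equivariance check then unwinds exactly as you describe, using the uniqueness of lifts from a fixed initial point to identify ${}^{u(p_0)}\widetilde{\omega}^{-1}\cdot u({}^{p_0}\widetilde{\sigma}_{\psi(i)}^{-1})$ with ${}^{p_0}\widetilde{\sigma}_i^{-1}\cdot s_i({}^{u(p_0)}\widetilde{\omega}^{-1})$.
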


\begin{proof}
 Note that an $\sH_{(k(u), \pi(u))}$-module $T_u N$ for an $\sH_{(k(u), \pi^{\sharp}(u))}$-module
 $N$ is isomorphic to $N$ as a vector space by the map $N \longrightarrow T_u N$, $v \mapsto T_u v$
 and the action of $T_z \in \sH_{(k(u), \pi(u))}$ corresponding to $z \in W_{(k(u), \pi(u))}$ 
 on $T_u N$ is given by $T_z T_u v = T_u (T_{u^{-1} z u} v)$ for $v \in N$. For a module
 $M \in \cO(W_{(k(u), \pi(u))})$, we define a map
 \begin{align*}
  \kappa^{(u)}: (T_u(-) \circ \KZ_{W_{(k(u), \pi^{\sharp}(u))}})(M) &\longrightarrow
  (\KZ_{W_{(k(u), \pi(u))}} \circ\, \Theta^{(u)}_{W_{(k(u), \pi^{\sharp}(u))}})(M), \\
  T_u\, v &\mapsto \widehat{\Theta}^{(u)}_{W_{(k(u), \pi^{\sharp}(u))}} \circ
  S_{M}(u^{-1}({}^{u(p_0)} \widetilde{\omega}^{-1})) (v)
 \end{align*}
 for $v \in \KZ_{W_{(k(u), \pi^{\sharp}(u))}}(M) =
 (M^{an})_{p_0}^{\nabla}$. Here we remark
 that we have $\overline{\omega} = u$ and
 $u^{-1}({}^{u(p_0)}\widetilde{\omega}^{-1})$ is a path from $p_0$ to $u^{-1}(p_0)$.
 Obviously $\kappa^{(u)}$ is an isomorphism of $\C$-vector spaces. We see that the map 
 $\kappa^{(u)}$
 commutes with the action of the Hecke algebra $\sH_{(k(u), \pi(u))}$ by direct 
 computation as follows:
 For $v \in \KZ_{W_{(k(u), \pi^{\sharp}(u))}}(M) = (M^{an})_{p_0}^{\nabla}$ and
 $T_i \in \sH_{(k(u), \pi(u))}$ corresponding to the generator $s_i \in X_{(k(u), \pi(u))}$,
 we have
 \begin{multline*}
  \kappa^{(u)}(T_i (T_u v))
  = \kappa^{(u)}(T_u (T_{\psi(i)} v)) \\
  = \widehat{\Theta}^{(u)}_{W_{(k(u), \pi^{\sharp}(u))}} \circ 
  S_{M}(u^{-1}({}^{u(p_0)} \widetilde{\omega}^{-1})) (S_M({}^{p_0} \widetilde{\sigma}_{\psi(i)}^{-1}) s_{\psi(i)} v)) \\
  = S_{u M}({}^{u(p_0)}\widetilde{\omega}^{-1} \cdot u({}^{p_0} \widetilde{\sigma}_{\psi(i)}^{-1}))
  s_i \widehat{\Theta}^{(u)}_{W_{(k(u), \pi^{\sharp}(u))}} (v).
 \end{multline*}
 Here we can deduce that
 the path 
 ${}^{u(p_0)}\widetilde{\omega}^{-1} \cdot u({}^{p_0} \widetilde{\sigma}_{\psi(i)}^{-1}) = (u({}^{p_0} \widetilde{\sigma}_{\psi(i)}) \cdot {}^{u(p_0)} \widetilde{\omega})^{-1} = ({}^{(u s_{\psi(i)})(p_0)} \widetilde{\sigma_{\psi(i)} \cdot \omega})^{-1}$ 
 is lifted from
 the element $(\sigma_{\psi(i)} \cdot \omega)^{-1} \in B_{n,r}$, being equal to
 $(\omega \cdot \sigma_{i})^{-1} \in B_{n,r}$ by \eqref{eq:braid us_j =
 s_j'u}. Thus, by the uniqueness of lifting from the fixed initial base point
 $(u s_{\psi(i)})(p_0) = (s_i u)(p_0)$, we have
\[
S_{uM}(({}^{(us_{\psi(i)})(p_0)} \widetilde{\sigma_{\psi(i)} \cdot \omega})^{-1}) = S_{uM}(({}^{(s_i u)(p_0)} \widetilde{\omega \cdot \sigma_i})^{-1}) = S_{uM}({}^{p_0}\widetilde{\sigma}_i^{-1} \cdot s_i({}^{u(p_0)}\widetilde{\omega}^{-1})). 
\]
Therefore, we have
 \begin{multline*}
  \kappa^{(u)}(T_i (T_u v))
  = S_{{uM}}({}^{u(p_0)}\widetilde{\omega}^{-1} \cdot u({}^{p_0} \widetilde{\sigma}_{\psi(i)}^{-1}))
  s_i \widehat{\Theta}^{(u)}_{(k(u), \pi^{\sharp}(u))} (v) \\
  = S_{uM}({}^{p_0}\widetilde{\sigma}_i^{-1} \cdot s_i({}^{u(p_0)}\widetilde{\omega}^{-1}))
  s_i \widehat{\Theta}^{(u)}_{(k(u), \pi^{\sharp}(u))} (v) \\
  = (S_{uM}({}^{p_0} \widetilde{\sigma}_i^{-1}) s_i) (S_{uM}({}^{u(p_0)} \widetilde{\omega}^{-1}) \widehat{\Theta}^{(u)}_{W_{(k(u), \pi^{\sharp}(u))}} (v))
  = T_i \cdot \kappa^{(u)}(T_u v).
 \end{multline*}
 That is, the map $\kappa^{(u)}$ is a homomorphism of $\sH_{(k(u), \pi(u))}$-modules.
 It is clear from the definition that $\kappa^{(u)}$ is functorial, and hence we have
 the desired isomorphism of functors.
\end{proof}


\begin{prop}
\label{prop:Mackey-RCA}
 We have the following isomorphism of functors $:$
 \[
  \ORes^{W_{n,r}}_{W_{(l, \mu)}} \circ \OInd^{W_{n,r}}_{W_{(m, \nu)}}
 \cong \bigoplus_{u \in {}^{(l, \mu)} W^{(m, \nu)}} 
 \OInd^{W_{(l, \mu)}}_{W_{(k(u), \pi(u))}} \circ\, u (-) \circ
 \ORes^{W_{(m, \nu)}}_{W_{(k(u), \pi^{\sharp}(u))}}.
 \]
\end{prop}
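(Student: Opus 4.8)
The plan is to obtain this as an immediate application of the lifting criterion in Proposition~\ref{prop:H-Mackey-RCA}, with $W = W_{n,r}$, $W_a = W_{(l,\mu)}$, $W_b = W_{(m,\nu)}$, the set of double coset representatives ${}^a D^b := {}^{(l,\mu)} W^{(m,\nu)}$ constructed in \S\ref{cpxref}, and the elements $\{T_u \in \sH_{n,r} \mid u \in {}^{(l,\mu)}W^{(m,\nu)}\}$ introduced in \S\ref{H Section}. So the entire proof consists of checking that the three hypotheses of that proposition are met and reading off its conclusion.

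First I would record the identifications of the relevant parabolic subgroups. By Proposition~\ref{W Prop Wkupiu} we have $W_a \cap u W_b u^{-1} = W_{(k(u),\pi(u))}$, and conjugating this by $u^{-1}$ and using \eqref{W u-1 Wkupiu u} gives $u^{-1} W_a u \cap W_b = u^{-1} W_{(k(u),\pi(u))} u = W_{(k(u),\pi^{\sharp}(u))}$. Both $W_{(k(u),\pi(u))}$ and $W_{(k(u),\pi^{\sharp}(u))}$ are standard parabolic subgroups of $W_{n,r}$ by construction (see \eqref{W Def pi(u)} and \eqref{W S ku pi sharp u}), so the standardness hypothesis of Proposition~\ref{prop:H-Mackey-RCA} holds. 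With these identifications in hand, the required isomorphism of functors \eqref{H Assm Mackey} is precisely Theorem~\ref{H Thm Mackey}~(iii): the functor $T_u(-)$ appearing there runs from $\sH_{(k(u),\pi^{\sharp}(u))}\cmod$ to $\sH_{(k(u),\pi(u))}\cmod$, matching the shape demanded in \eqref{H Assm Mackey}.

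It remains to verify the $\KZ$-compatibility condition \eqref{comKZ}. Under the dictionary just set up, the functor $\Theta^{(u)}_{u^{-1}W_a u \cap W_b}$ is exactly $\Theta^{(u)}_{W_{(k(u),\pi^{\sharp}(u))}}$, which we denote $u(-)$, and the isomorphism $\KZ_{W_{(k(u),\pi(u))}} \circ u(-) \cong T_u(-) \circ \KZ_{W_{(k(u),\pi^{\sharp}(u))}}$ is the content of Proposition~\ref{prop:twist-KZ}. Having checked all hypotheses, Proposition~\ref{prop:H-Mackey-RCA} yields
\[
 \ORes^{W_{n,r}}_{W_{(l,\mu)}} \circ \OInd^{W_{n,r}}_{W_{(m,\nu)}}
 \cong \bigoplus_{u \in {}^{(l,\mu)} W^{(m,\nu)}}
 \OInd^{W_{(l,\mu)}}_{W_{(k(u),\pi(u))}} \circ\, \Theta^{(u)}_{W_{(k(u),\pi^{\sharp}(u))}} \circ
 \ORes^{W_{(m,\nu)}}_{W_{(k(u),\pi^{\sharp}(u))}},
\]
and rewriting $\Theta^{(u)}_{W_{(k(u),\pi^{\sharp}(u))}}$ as $u(-)$ gives the statement.

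In short, all the real difficulty has already been absorbed into the earlier sections: the combinatorics of \S\ref{cpxref} producing coset representatives $u$ for which both intersections $W_a \cap u W_b u^{-1}$ and $u^{-1}W_a u \cap W_b$ are standard (Proposition~\ref{W Prop Wkupiu}), and the fact that the key braid identity $\omega\sigma_j = \sigma_{\psi(j)}\omega$ follows from braid relations alone (Proposition~\ref{W prop only braid}), which is what powers the monodromy computation of Proposition~\ref{prop:twist-KZ}. So for this proposition the only thing to be careful about is the bookkeeping: matching the three inputs of Proposition~\ref{prop:H-Mackey-RCA} --- the Hecke-level Mackey formula, the standardness of the subgroups, and condition \eqref{comKZ} --- to Theorem~\ref{H Thm Mackey}, Proposition~\ref{W Prop Wkupiu} together with \eqref{W u-1 Wkupiu u}, and Proposition~\ref{prop:twist-KZ}, respectively, making sure the parabolic subgroups and functors line up on the two sides.
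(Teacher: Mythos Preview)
Your proof is correct and follows exactly the same approach as the paper: the paper's own proof simply says ``Combining Theorem~\ref{H Thm Mackey}~(\roiii) and Proposition~\ref{prop:twist-KZ} with Proposition~\ref{prop:H-Mackey-RCA}, we deduce the desired result.'' Your version merely spells out the bookkeeping (the identifications $W_a\cap uW_bu^{-1}=W_{(k(u),\pi(u))}$ and $u^{-1}W_au\cap W_b=W_{(k(u),\pi^{\sharp}(u))}$ via Proposition~\ref{W Prop Wkupiu} and \eqref{W u-1 Wkupiu u}) that the paper leaves implicit.
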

\begin{proof}
Combining   
 Theorem \ref{H Thm Mackey} (\roiii) and
 Proposition \ref{prop:twist-KZ} with
Proposition~\ref{prop:H-Mackey-RCA}, we deduce the desired result.
 \end{proof}


\para
In this subsection, we shall polish Proposition~\ref{prop:Mackey-RCA} to the
Mackey formula in $\cO$ for all the parabolic subgroups.
Note that 
any parabolic subgroup of $W_{n,r}$ coincides with 
$x W_{(l,\mu)} x^{-1}$ for some $l \geq 0$, $\mu \vDash n-l$ and $x \in W_{n,r}$. 
By applying the twisting functor $\Theta^{(x)}_{W_{(l,\mu)}}$ defined in
the paragraph
\ref{twist functor} to Proposition \ref{prop:Mackey-RCA}, 
we finally obtain the second main theorem of this paper, which supports
Conjecture~\ref{ourconj} : 
\begin{thm}[The Mackey formula for $\cO$ over cyclotomic rational Cherednik algebras]\label{thm:Mackey-RCA}
Let $W_a, W_b$ be parabolic subgroups of $W_{n,r}$, 
and $^a W^b$ be a complete set of double coset representatives of $W_a \backslash W_{n,r} / W_b$. 
Then we have the following isomorphism of functors $:$ 
\begin{align*}
  \ORes^W_{W_a} \circ \OInd^W_{W_b}
 \cong \bigoplus_{u \in {}^{a} W^{b}} 
\OInd^{W_a}_{W_a \cap u W_b u^{-1}} \circ\, u (-) \circ
 \ORes^{W_b}_{u^{-1} W_a u \cap W_b }.
\end{align*}
\end{thm}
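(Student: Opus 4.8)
The plan is to reduce Theorem~\ref{thm:Mackey-RCA} to the already-established Proposition~\ref{prop:Mackey-RCA}, which handles the case of standard parabolic subgroups, by conjugating everything with the twisting functors $\Theta^{(x)}_{-}$ of paragraph~\ref{twist functor}. First I would recall that every parabolic subgroup $W_a$ (resp.\ $W_b$) of $W_{n,r}$ is conjugate to a standard one: write $W_a = x W_{(l,\mu)} x^{-1}$ and $W_b = y W_{(m,\nu)} y^{-1}$ for suitable $x,y \in W_{n,r}$, $l,m \geq 0$, $\mu \vDash n-l$, $\nu \vDash n-m$. I would then fix the complete set of representatives ${}^{(l,\mu)}W^{(m,\nu)}$ for $W_{(l,\mu)}\backslash W_{n,r}/W_{(m,\nu)}$ supplied by the paragraph before Lemma~\ref{W Lemma nu 1n-m}, and observe that $\{x u y^{-1} \mid u \in {}^{(l,\mu)}W^{(m,\nu)}\}$ is then a complete set of representatives ${}^a W^b$ for $W_a\backslash W_{n,r}/W_b$; since the statement to be proved is manifestly independent of the choice of double coset representatives, it suffices to prove it for this particular choice.

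The main computation is then a bookkeeping exercise with Proposition~\ref{Prop Theta}. Starting from $\ORes^{W}_{W_a}\circ\OInd^{W}_{W_b}$, I would use Proposition~\ref{Prop Theta}(\roiii)(a) to write $\ORes^{W}_{W_a} \cong \Theta^{(x^{-1})}_{W_a} \circ \ORes^{W}_{W_{(l,\mu)}}$ wait---more precisely $\Theta^{(x)}_{W_{(l,\mu)}}\circ\ORes^W_{W_{(l,\mu)}} \cong \ORes^W_{W_a}$, so $\ORes^W_{W_a} \cong \Theta^{(x)}_{W_{(l,\mu)}}\circ\ORes^W_{W_{(l,\mu)}}$, and dually by Proposition~\ref{Prop Theta}(\roiii)(b), $\OInd^W_{W_b} \cong \OInd^W_{W_{(m,\nu)}}\circ\Theta^{(y^{-1})}_{W_b}$. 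Plugging these in and invoking Proposition~\ref{prop:Mackey-RCA} for the middle factor $\ORes^W_{W_{(l,\mu)}}\circ\OInd^W_{W_{(m,\nu)}}$, I obtain
\[
\ORes^W_{W_a}\circ\OInd^W_{W_b}
\cong
\Theta^{(x)}_{W_{(l,\mu)}}\circ
\Bigl(\bigoplus_{u\in {}^{(l,\mu)}W^{(m,\nu)}}
\OInd^{W_{(l,\mu)}}_{W_{(k(u),\pi(u))}}\circ\,\Theta^{(u)}_{W_{(k(u),\pi^\sharp(u))}}\circ\ORes^{W_{(m,\nu)}}_{W_{(k(u),\pi^\sharp(u))}}\Bigr)
\circ\Theta^{(y^{-1})}_{W_b}.
\]
Now I would push the outer $\Theta^{(x)}_{W_{(l,\mu)}}$ inside the direct sum term-by-term using Proposition~\ref{Prop Theta}(\roiv)(b) to move it past $\OInd^{W_{(l,\mu)}}_{W_{(k(u),\pi(u))}}$ (turning it into $\OInd^{W_a}_{x W_{(k(u),\pi(u))}x^{-1}}\circ\Theta^{(x)}_{W_{(k(u),\pi(u))}}$), then compose $\Theta^{(x)}_{W_{(k(u),\pi(u))}}\circ\Theta^{(u)}_{W_{(k(u),\pi^\sharp(u))}}\cong\Theta^{(xu)}_{W_{(k(u),\pi^\sharp(u))}}$ via Proposition~\ref{Prop Theta}(\roii); symmetrically, I would push $\Theta^{(y^{-1})}_{W_b}$ past $\ORes^{W_{(m,\nu)}}_{W_{(k(u),\pi^\sharp(u))}}$ using Proposition~\ref{Prop Theta}(\roiv)(a) and absorb the resulting $\Theta^{(y^{-1})}_{W_{(m,\nu)}}$, and also rewrite the remaining outermost $\Theta^{(y^{-1})}$-type factor as an honest $\ORes$ from $W_b$ via Proposition~\ref{Prop Theta}(\roiii)(a). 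After these identifications the $u$-th summand becomes
\[
\OInd^{W_a}_{x W_{(k(u),\pi(u))}x^{-1}}\circ\,\Theta^{(xuy^{-1})}_{y W_{(k(u),\pi^\sharp(u))}y^{-1}}\circ\ORes^{W_b}_{y W_{(k(u),\pi^\sharp(u))}y^{-1}}.
\]
It remains to match the subgroups: using $W_{(l,\mu)}\cap u W_{(m,\nu)}u^{-1}=W_{(k(u),\pi(u))}$ (Proposition~\ref{W Prop Wkupiu}) and $u^{-1}W_{(k(u),\pi(u))}u = W_{(k(u),\pi^\sharp(u))}$ (equation~\eqref{W u-1 Wkupiu u}), one checks $x W_{(k(u),\pi(u))}x^{-1} = (x W_{(l,\mu)}x^{-1})\cap (xu y^{-1})(y W_{(m,\nu)}y^{-1})(xu y^{-1})^{-1} = W_a\cap (xuy^{-1})W_b(xuy^{-1})^{-1}$, and likewise $y W_{(k(u),\pi^\sharp(u))}y^{-1} = (xuy^{-1})^{-1}W_a(xuy^{-1})\cap W_b$; writing $v = xuy^{-1}$ for the corresponding element of ${}^a W^b$, the summand is exactly $\OInd^{W_a}_{W_a\cap v W_b v^{-1}}\circ v(-)\circ\ORes^{W_b}_{v^{-1}W_a v\cap W_b}$, where $v(-)=\Theta^{(v)}_{v^{-1}W_a v\cap W_b}$ in the notation of paragraph~\ref{twist functor}.

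The routine but slightly delicate part is keeping the conjugation actions straight so that the instances of Proposition~\ref{Prop Theta}(\roiii)--(\roiv) are applied with the correct elements and subgroups; the conceptual content is already fully contained in Proposition~\ref{prop:Mackey-RCA}, so I do not expect a genuine obstacle, only the need to verify carefully that the set $\{xuy^{-1}\}$ really is a set of double coset representatives and that all the subgroup identities $x W_{(k(u),\pi(u))}x^{-1} = W_a\cap v W_b v^{-1}$ etc.\ hold on the nose rather than merely up to conjugacy. Both of these follow immediately from the group-theoretic identities $W_a = x W_{(l,\mu)}x^{-1}$, $W_b = y W_{(m,\nu)}y^{-1}$ together with the fact (Proposition~\ref{W Prop Wkupiu} and~\eqref{W u-1 Wkupiu u}) that $W_{(k(u),\pi(u))} = W_{(l,\mu)}\cap u W_{(m,\nu)}u^{-1}$ and $W_{(k(u),\pi^\sharp(u))} = u^{-1}\bigl(W_{(l,\mu)}\cap u W_{(m,\nu)}u^{-1}\bigr)u$ are intrinsic to the double coset $W_{(l,\mu)}u W_{(m,\nu)}$, and I would record them as a short preliminary remark before carrying out the chain of functor isomorphisms above.
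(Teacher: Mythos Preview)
Your proposal is correct and follows essentially the same approach as the paper's proof: reduce to the standard-parabolic case (Proposition~\ref{prop:Mackey-RCA}) by writing $W_a = x W_{(l,\mu)} x^{-1}$, $W_b = y W_{(m,\nu)} y^{-1}$, sandwich with the twisting functors $\Theta^{(x)}$, $\Theta^{(y^{-1})}$, push them through using Proposition~\ref{Prop Theta}, and identify the resulting subgroups and representatives. The only point the paper makes slightly more explicit is the justification that the formula is independent of the choice of representatives---namely, that $\theta^{(x)}_{W'}$ is an inner automorphism when $x\in W'$, so $u(-)$ depends only on the double coset---which you assert as ``manifest'' but should state as a one-line observation.
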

\begin{proof}
Let $x, y \in W$, $l, m \geq 0$, $\mu \vDash n-l$ and $\nu \vDash n-m$ be such that 
$W_a = x W_{(l,\mu)} x^{-1}$ and $W_b = y W_{(m,\nu)} y^{-1}$. 
By Proposition \ref{prop:Mackey-RCA}, 
we have 
\begin{align*}
&\Theta^{(x)}_{W_{(l,\mu)}} \circ \ORes^{W_{n,r}}_{W_{(l, \mu)}} 
\circ \OInd^{W_{n,r}}_{W_{(m, \nu)}} \circ\, \Theta^{(y^{-1})}_{y W_{(m,\nu)} y^{-1}} 
\\
&\cong \bigoplus_{u \in {}^{(l, \mu)} W^{(m, \nu)}} 
 \Theta^{(x)}_{W_{(l,\mu)}} \circ \OInd^{W_{(l, \mu)}}_{W_{(k(u), \pi(u))}} \circ\, u (-) \circ
 \ORes^{W_{(m, \nu)}}_{W_{(k(u), \pi^{\sharp}(u))}} \circ\, \Theta^{(y^{-1})}_{y W_{(m,\nu)} y^{-1}} .
\end{align*}
Applying Proposition \ref{Prop Theta} to this isomorphism, 
we have 
\begin{align*}
&\ORes^{W_{n,r}}_{x W_{(l,\mu)} x^{-1}} \circ \OInd^{W_{n,r}}_{y W_{(m,\nu)} y^{-1}} 
\\
&\cong 
\bigoplus_{u \in {}^{(l, \mu)} W^{(m, \nu)}} 
\OInd^{ x W_{(l,\mu)} x^{-1}}_{x W_{(k(u), \pi(u))} x^{-1}} 
\circ\, x u y^{-1} (-) \circ \ORes^{y W_{(m,\nu)} y^{-1}}_{y W_{(k(u), \pi^{\sharp}(u))}y^{-1}}. 
\end{align*}
By direct calculation, we can check that 
$^{(l,\mu)} W^{(m,\nu)}_{(x,y)} =\{ x u y^{-1} \mid u \in \,^{(l,\mu)} W^{(m,\nu)}\}$  
is a complete set of double coset representatives of $W_a \backslash W_{n,r}/ W_b$. 
We can also check that 
\begin{align*}
&x W_{(k(u), \pi(u))} x^{-1} = W_a \cap x u y^{-1} W_b (x u y^{-1})^{-1} 
\\
&y W_{(k(u), \pi^{\sharp}(u))}y^{-1} = (x u y^{-1})^{-1} W_a (x u y^{-1}) \cap W_b.
\end{align*} 
Thus we have 
\begin{align*}
  \ORes^W_{W_a} \circ \OInd^W_{W_b}
 \cong \bigoplus_{u \in {}^{(l,\mu)} W^{(m,\nu)}_{(x,y)}} 
\OInd^{W_a}_{W_a \cap u W_b u^{-1}} \circ\, u (-) \circ
 \ORes^{W_b}_{u^{-1} W_a u \cap W_b }.
\end{align*}

Since the isomorphism $\theta^{(x)}_{W'}$ is a $\C$-algebra automorphism of $H(W')$ if
the element $x$ belongs to the subgroup $W'$, the functor $u(-)$ does not depend on
the choice of the double coset representative $u$. Thus we have the
 desired isomorphism.
\end{proof}



\appendix

\section{Lifting an isomorphism of functors through the double centralizer properties} 

\para 
Let $\bk$ be a field. 
Let $\fC_i$ ($i=1,2$) be an abelian $\bk$-linear category in which every object has finite length. 
We assume that $\fC_i$ has enough projective objects. 
Let $\fC_i \proj$  be the full subcategory of $\fC_i$ 
consisting of projective objects, 
and $I_i : \fC_i \proj \ra \fC_i$ 
be the canonical embedding functor. 
Then we have the following lemma. 


\begin{lem}[{cf. \cite[Lemma 1.2]{Shan}}]
\label{Lemma F I1 G I2}
Let $F$ and $G$ be right exact functors from $\fC_1$ to $\fC_2$. 
If there exists an isomorphism of functors $F \circ I_1 \cong G \circ I_1$, 
we have $F \cong G$. 
\end{lem}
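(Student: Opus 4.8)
The plan is to recover $F$ and $G$, up to isomorphism, from their restrictions to $\fC_1\proj$ via cokernel presentations, exploiting right exactness. First I would fix for each object $M \in \fC_1$ a projective presentation $P_1^M \xrightarrow{d_M} P_0^M \xrightarrow{\varepsilon_M} M \to 0$, which exists because $\fC_1$ has enough projectives. Right exactness of $F$ makes $F(P_1^M)\xrightarrow{F(d_M)}F(P_0^M)\to F(M)\to 0$ exact, so the canonical map $\operatorname{coker}F(d_M)\to F(M)$ is an isomorphism, and likewise the canonical map $\operatorname{coker}G(d_M)\to G(M)$ is an isomorphism.

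Given the isomorphism $\eta\colon F\circ I_1\xrightarrow{\sim}G\circ I_1$, applying $\eta$ to the morphism $d_M$ of $\fC_1\proj$ produces a commutative square with invertible vertical arrows $\eta_{P_1^M},\eta_{P_0^M}$ intertwining $F(d_M)$ and $G(d_M)$; it therefore induces an isomorphism on cokernels, and composing with the identifications of the previous paragraph I define $\theta_M\colon F(M)\xrightarrow{\sim}G(M)$. To see that $\theta=(\theta_M)_{M}$ is natural, I would take $f\colon M\to N$, lift it (using projectivity of $P_0^M$ and $P_1^M$) to a chain map $(f_1,f_0)$ from the chosen presentation of $M$ to that of $N$, note that $F(f)$ is then the map induced on cokernels by $F(f_0)$ (because the lift commutes with the augmentations $\varepsilon_M,\varepsilon_N$ and $F$ is a functor) and similarly $G(f)$ is induced by $G(f_0)$, and deduce the square $\theta_N\circ F(f)=G(f)\circ\theta_M$ from the naturality squares of $\eta$ at $f_0$ and $f_1$. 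This yields the desired natural isomorphism $F\cong G$.

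The argument is entirely formal, so I do not expect a genuine obstacle; the only points needing care are the cokernel bookkeeping — in particular the identification of $F(f)$ with the map on cokernels induced by $F(f_0)$ — and the observation that, although $\theta_M$ is built from a chosen presentation of $M$, the resulting family is a bona fide natural transformation independent of those choices (two presentations of $M$ are linked by a comparison chain map lying over $\mathrm{id}_M$, which after applying $F$ induces the identity on the cokernel side). Equivalently, one may invoke the general fact that a right exact functor on an abelian category with enough projectives is the left Kan extension along $I_1$ of its restriction to $\fC_1\proj$, from which $F\cong G$ is immediate; the plan above simply unwinds this, in parallel with the proof of \cite[Lemma~1.2]{Shan}.
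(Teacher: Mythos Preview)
Your argument is correct and is exactly the standard one the paper is pointing to when it says ``by the same way as the proof of \cite[Lemma~1.2]{Shan}''; the paper gives no further details beyond that reference, and your cokernel-presentation construction together with the naturality check via lifted chain maps is precisely what that reference unwinds to.
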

\begin{proof} 
We can prove the lemma by the same way as the proof of \cite[Lemma 1.2]{Shan}. 
\end{proof}

\para 
Let $P_i$ ($i=1,2$) be projective objects in $\fC_i$. 
Put $\fD_i = B_i \cmod $, 
where $B_i = \End_{\fC_i}(P_i)^{\operatorname{opp}}$. 
We consider the functor 
$\Omega_i = \Hom_{\fC_i} (P_i, -) : \fC_i \ra \fD_i$. 
Let $\Phi_i : \fD_i \ra \fC_i$ be the right adjoint functor of $\Omega_i$. 
We denote by $\ve_i : \Omega_i \circ \Phi_i \ra \Id_{\fD_i}$ 
(resp. $\eta_i : \Id_{\fC_i} \ra \Phi_i \circ \Omega_i$) 
the corresponding unit (resp. counit). 
Then we have the following lemma. 


\begin{lem}[{\cite[4.2.1 and Proposition 4.33]{Rouq}}] 
\label{Lemma double}
Assume that the restriction of  $\Omega_i$ ($i=1,2$) 
to $\fC_i \proj$ 
is fully faithful. 
Then we have the following. 
\begin{enumerate} 
\item 
The unit $\ve_i $  implies the isomorphism of functors 
$\Omega_i \circ \Phi_i \cong \Id_{\fD_i}$.  

\item 
The counit $\eta_i$ implies the isomorphism of functors 
$\Id_{\fC_i} \circ I_i \cong \Phi_i \circ \Omega_i \circ I_i$. 

\end{enumerate}
\end{lem}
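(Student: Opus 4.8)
The proof will run through the triangle identities of the adjunction $(\Omega_i, \Phi_i)$. Writing $\eta_i \colon \Id_{\fC_i} \to \Phi_i \Omega_i$ and $\ve_i \colon \Omega_i \Phi_i \to \Id_{\fD_i}$ for its two structure transformations, these identities read $\ve_{i, \Omega_i N} \circ \Omega_i(\eta_{i, N}) = \Id_{\Omega_i N}$ for $N \in \fC_i$ and $\Phi_i(\ve_{i, M}) \circ \eta_{i, \Phi_i M} = \Id_{\Phi_i M}$ for $M \in \fD_i$. The plan is to prove (\roii) first, by a Yoneda argument, and then to deduce (\roi) from it.

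For (\roii) I would fix a projective object $P'$ of $\fC_i$ and, for an arbitrary object $Q$ of $\fC_i$, apply $\Hom_{\fC_i}(Q, -)$ to the morphism $\eta_{i, P'}$. Using the adjunction isomorphism $\Hom_{\fC_i}(Q, \Phi_i \Omega_i P') \cong \Hom_{\fD_i}(\Omega_i Q, \Omega_i P')$, the first triangle identity identifies the resulting map with the structure map $\Omega_i \colon \Hom_{\fC_i}(Q, P') \to \Hom_{\fD_i}(\Omega_i Q, \Omega_i P')$; when $Q$ is projective this is bijective by hypothesis. Hence $\Hom_{\fC_i}(Q, \eta_{i, P'})$ is bijective for every $Q \in \fC_i \proj$, and since $\fC_i$ has enough projectives this forces $\eta_{i, P'}$ to be an isomorphism: it is an epimorphism, because an epimorphism onto $\Phi_i \Omega_i P'$ from a projective lifts through $\eta_{i, P'}$; and a monomorphism, because a nonzero projective mapping onto $\ker \eta_{i, P'}$ would give a nonzero element in the kernel of $\Hom_{\fC_i}(-, \eta_{i, P'})$. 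Naturality then yields the isomorphism of functors $\Id_{\fC_i} \circ I_i \cong \Phi_i \Omega_i \circ I_i$, which is (\roii).

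For (\roi): applying $\Omega_i$ and the first triangle identity to the isomorphisms $\eta_{i, P'}$ ($P' \in \fC_i \proj$) furnished by (\roii) shows that $\ve_{i, \Omega_i P'}$ is an isomorphism for every projective $P'$ of $\fC_i$; in particular $\ve_{i, B_i}$, the value at the left regular module $B_i = \Omega_i P_i$, is an isomorphism. To reach an arbitrary $M \in \fD_i$ I would use that $\Omega_i$ is exact (since $P_i$ is projective) and essentially surjective: a free presentation $B_i^{(m)} \to B_i^{(n)} \to M \to 0$ lifts uniquely, by the hypothesis applied to $P_i^{(m)}$ and $P_i^{(n)}$, to a morphism $P_i^{(m)} \to P_i^{(n)}$ of $\fC_i$ whose cokernel $N$ satisfies $\Omega_i N \cong M$ because $\Omega_i$ is exact. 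By naturality, $\ve_{i, M}$ is an isomorphism if and only if $\ve_{i, \Omega_i N}$ is; and by the first triangle identity $\ve_{i, \Omega_i N}$ is a split epimorphism with section $\Omega_i(\eta_{i, N})$, so it remains to check that $\Omega_i(\eta_{i, N})$ is an isomorphism for every $N \in \fC_i$. This one would extract from a projective resolution of $N$, the exactness of $\Omega_i$, the naturality of $\eta_i$, and the already-established case of projective objects.

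I expect the main obstacle to lie precisely in this last reduction. Since $\Phi_i$ is a right adjoint it is only left exact, so $\Omega_i \Phi_i$ cannot be applied directly to the right exact presentation of $N$, and the failure of right exactness of $\Phi_i$ has to be controlled; it is here that the finite-length hypothesis on $\fC_i$ is needed, and the detailed argument is the content of \cite[4.2.1, Proposition~4.33]{Rouq}. Everything else is a formal manipulation of the adjunction. Once (\roi) and (\roii) are in hand, they supply precisely the two natural isomorphisms required to lift an isomorphism of functors between module categories to one between categories $\cO$ through the double centralizer property.
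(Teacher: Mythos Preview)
The paper does not supply its own proof of this lemma; it simply records the statement and cites \cite[4.2.1 and Proposition~4.33]{Rouq}. So there is no internal argument to compare yours against, and your proposal must be judged on its own merits.

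Your proof of (\roii) is correct and self-contained. The Yoneda manoeuvre---identifying $\Hom_{\fC_i}(Q,\eta_{i,P'})$ with the action of $\Omega_i$ on morphisms via the adjunction and the triangle identity---is exactly the right mechanism, and the passage from ``$\Hom_{\fC_i}(Q,\eta_{i,P'})$ is bijective for all projective $Q$'' to ``$\eta_{i,P'}$ is an isomorphism'' is valid in any abelian category with enough projectives.

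For (\roi) your argument is sound up to the point you flag. From (\roii) and the triangle identity you correctly obtain that $\ve_{i,\Omega_i P'}$, and in particular $\ve_{i,B_i^n}$, is an isomorphism. The detour through essential surjectivity of $\Omega_i$ is unnecessary, however: it is cleaner to take a free presentation $B_i^m \to B_i^n \to M \to 0$ directly in $\fD_i$, note that by (\roii) one has $\Phi_i(B_i)\cong\Phi_i\Omega_i P_i\cong P_i$, and compare the two rows obtained by applying $\Omega_i\Phi_i$ and the identity. Either way the residual problem is the same one you isolate: $\Phi_i$ is only left exact, so one must show that $\Omega_i\Phi_i$ nevertheless preserves this particular right-exact sequence (equivalently, that $\Omega_i(\eta_{i,N})$ is an isomorphism for all $N$). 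You are right that this is where the finite-length hypothesis is used and where the substance of Rouquier's argument lies; as written, your proof of (\roi) is an accurate outline but not complete without that input. Since the paper itself defers to \cite{Rouq} at exactly this point, your treatment is no less complete than the paper's.
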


Under the setting in the above, 
we have the following lifting of isomorphism of functors. 


\begin{prop}
\label{Prop lifting}
Let $F, G : \fC_1 \ra \fC_2$ (resp. $F', G' : \fD_1 \ra \fD_2$) be functors. 
Assume that 
\begin{enumerate} 
\item the restriction of  $\Omega_i$ ($i=1,2$) 
to $\fC_i \proj$ is fully faithful, 

\item 
$F$ and $G$ are right exact, 

\item 
$F$ and $G$ map projective objects to projective ones, 

\item 
the isomorphisms of functors 
$\Omega_2 \circ F \cong F' \circ \Omega_1$ and $\Omega_2 \circ G \cong G' \circ \Omega_1$.
\end{enumerate} 
Then, 
if there exists an isomorphism of functors 
$F' \cong G'$, 
we have $F \cong G$. 
\end{prop}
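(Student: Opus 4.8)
The plan is to reduce the statement, by means of Lemma~\ref{Lemma F I1 G I2}, to the assertion that $F$ and $G$ restrict to isomorphic functors on the subcategory of projectives, i.e.\ that $F \circ I_1 \cong G \circ I_1$. Indeed, $F$ and $G$ are right exact by hypothesis~(\roii), so once such an isomorphism of functors is produced, Lemma~\ref{Lemma F I1 G I2} immediately yields $F \cong G$. Thus the whole argument takes place \emph{on projectives}.

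To compare $F$ and $G$ on projectives, the idea is to reconstruct each of them from $F'$, resp.\ $G'$, using the double centralizer input of Lemma~\ref{Lemma double}. First I would observe that, by hypothesis~(\roiii), the composite $F \circ I_1$ takes values in $\fC_2\proj$, so it factors as $F \circ I_1 \cong I_2 \circ F^{\proj}$ for a functor $F^{\proj} \colon \fC_1\proj \to \fC_2\proj$, the factorization being natural because $F$ carries morphisms between projectives to morphisms between projectives. Lemma~\ref{Lemma double}~(\roii), applied with $i=2$, provides a natural isomorphism $I_2 \cong \Phi_2 \circ \Omega_2 \circ I_2$; composing it with $F^{\proj}$ and then invoking hypothesis~(\roiv) gives the chain
\[
F \circ I_1 \;\cong\; I_2 \circ F^{\proj} \;\cong\; \Phi_2 \circ \Omega_2 \circ I_2 \circ F^{\proj} \;\cong\; \Phi_2 \circ \Omega_2 \circ F \circ I_1 \;\cong\; \Phi_2 \circ F' \circ \Omega_1 \circ I_1 .
\]
Running the identical argument for $G$ yields $G \circ I_1 \cong \Phi_2 \circ G' \circ \Omega_1 \circ I_1$. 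The hypothesis $F' \cong G'$ then gives $\Phi_2 \circ F' \circ \Omega_1 \circ I_1 \cong \Phi_2 \circ G' \circ \Omega_1 \circ I_1$ (apply $\Phi_2$ to the natural isomorphism $F' \cong G'$ and precompose with $\Omega_1 \circ I_1$). Concatenating these natural isomorphisms produces $F \circ I_1 \cong G \circ I_1$, and then Lemma~\ref{Lemma F I1 G I2} finishes the proof.

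The point requiring attention is not genuine difficulty but bookkeeping: each step above must be an isomorphism of functors, not merely an object-wise isomorphism, so I would be careful to use Lemma~\ref{Lemma double}~(\roii) precisely as the statement that the counit $\eta_2$ becomes invertible after restriction to $\fC_2\proj$, and to use only composition of functors with natural isomorphisms, which preserves naturality. No further ingredient beyond hypotheses~(\roi)--(\roiv) and the two preceding lemmas is needed; hypothesis~(\roi) enters the argument exactly through Lemma~\ref{Lemma double}.
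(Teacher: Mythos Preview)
Your proposal is correct and follows essentially the same approach as the paper: both arguments use hypothesis~(\roiii) to insert $I_2$, invoke Lemma~\ref{Lemma double}~(\roii) to identify $\Phi_2 \circ \Omega_2 \circ I_2$ with $I_2$, apply hypothesis~(\roiv) to pass to $F'$ and $G'$, and then finish with $F' \cong G'$ and Lemma~\ref{Lemma F I1 G I2}. The only cosmetic difference is that the paper runs the chain of isomorphisms starting from $F' \cong G'$ and working toward $F \circ I_1 \cong G \circ I_1$, whereas you first express each of $F \circ I_1$, $G \circ I_1$ in terms of $F'$, $G'$ and then compare.
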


\begin{proof}
The isomorphism $F' \cong G'$ implies 
the isomorphism 
$F' \circ \Omega_1 \cong G' \circ \Omega_1$. 
By the assumption (\roiv), 
we have 
$\Omega_2 \circ F \cong \Omega_2 \circ G$. 
This implies that 
$\Phi_2 \circ \Omega_2 \circ F \circ I_1 \cong \Phi_2 \circ \Omega_2 \circ G \circ I_1$. 
By the assumption (\roiii), 
we have 
\begin{align} 
\label{iso Phi Omega I F I}
\Phi_2 \circ \Omega_2 \circ I_2 \circ F \circ I_1 \cong \Phi_2 \circ \Omega_2 \circ I_2 \circ G \circ I_1.
\end{align}  
On the other hand, 
by the assumption (\roi) together with Lemma \ref{Lemma double}, 
we have $\Phi_2 \circ \Omega_2 \circ I_2 \cong \Id_2 \circ I_2$. 
Thus, \eqref{iso Phi Omega I F I} implies 
$F \circ I_1 \cong G \circ I_1$, 
and we have 
$F \cong G$ by the assumption (\roii) together with Lemma \ref{Lemma F I1 G I2}. 
\end{proof}



\section{A root system for $G(r,1,n)$} 

In this appendix, 
we explain some connection with a root system for the complex reflection group of type $G(r,1,n)$
introduced in \cite{BM}. 
We use notation and results given in \cite{RS}. 

\para 
Let $V$ be a complex vector space with a basis $\{\epsilon_1, \epsilon_2,\dots, \epsilon_n\}$. 
Let $\zeta = \operatorname{exp}(2 \pi \sqrt{-1}/r)$ 
be the primitive $r$-th root of unity. 
Then $W_{n,r}$ acts on $V$ by 
\begin{align*}
(x t_1^{a_1} \dots t_n^{a_n}) \cdot \epsilon_i = \zeta^{a_i} \epsilon_{x(i)} 
\quad (x \in \fS_n, \, 0 \leq a_1, \dots, a_n \leq r-1, \, 1\leq i  \leq n). 
\end{align*}
For $i=1,\dots, n-1$, 
a vector $\epsilon_{i+1} - \epsilon_i$ is orthogonal to the reflection hyperplane corresponding the reflection $s_i$, 
and a vector $\epsilon_1$ is orthogonal to the reflection hyperplane corresponding the reflection $s_0$. 
Put 
\begin{align*}
\overline{\Delta} = \{\epsilon_{i+1} - \epsilon_i \mid 1\leq i \leq n-1\} \cup \{\epsilon_1\} 
\end{align*}
and put $\overline{\Phi}  =W_{n,r} \cdot \overline{\Delta} $. 
Then we have 
\begin{align*}
\overline{\Phi} 
= \{ \zeta^{a} \epsilon_{i} - \zeta^{b} \epsilon_j \mid 1\leq i\not=j \leq n, \, 0 \leq a, b \leq r-1\} 
	\cup \{ \zeta^{a} \epsilon_i \mid 1\leq i \leq n, \, 0\leq a \leq r-1\}.
\end{align*}


\para 
In this appendix, 
we identify elements $a \in \ZZ / r \ZZ$ with integers $0 \leq a \leq r-1$. 
We consider a set 
$X=\{ e_i^{(a)} \mid 1 \leq i \leq n, \, a \in \ZZ /  r \ZZ\}$, 
where $e_i^{(a)}$ is just a symbol indexed by $i$ and $a$. 
One can define an action of $W_{n,r}$ on $X$ by 
\begin{align*}
(x t_1^{a_1} \dots t_n^{a_n}) \cdot e_i^{(a)} = e_{x(i)}^{(a + a_i)}
\quad (x \in \fS_n, \, 0 \leq a_1, \dots, a_n \leq r-1, \, 1\leq i  \leq n, \, a \in \ZZ / r \ZZ). 
\end{align*}
We also define another action of $W_{n,r}$ on $X$ by 
\begin{align*}
(x t_1^{a_1} \dots t_n^{a_n}) \ast e_i^{(a)} = e_{x(i)}^{(a - a_i)}
\quad (x \in \fS_n, \, 0 \leq a_1, \dots, a_n \leq r-1, \, 1\leq i  \leq n, \, a \in \ZZ / r \ZZ). 
\end{align*}

We express an element $(e_i^{(a)}, e_j^{(b)}) \in X \times X$ as $e_i^{(a)} - e_j^{(b)}$ in the case where $i \not=j$. 
Then we define a root system $\Phi$ for $W_{n,r}$ by 
\begin{align*}
\Phi = \{e_i^{(a)} - e_j^{(b)} \mid 1 \leq i \not=j \leq n, \, a, b \in \ZZ / r \ZZ\} 
	\cup \{ e_i^{(a)} \mid 1 \leq i \leq n, \, a \in \ZZ / r \ZZ\}. 
\end{align*}
We define subsets $\Phi_0, \Omega$ and $\Delta$  of $\Phi$ by 
\begin{align*}
&\Phi_0 = \{ e_i^{(a)} - e_j^{(b)} \in \Phi \mid i > j, \, a =0\} \cup \{ e_i^{(a)} - e_j^{(b)} \mid i < j, \, b \not= 0\} 
	\cup \{e_i^{(0)} \mid 1 \leq i \leq n\}, 
\\
&\Omega = \{ e_i^{(0)} - e_j^{(b)} \mid 1 \leq j < i \leq n, \, b \in \ZZ / r \ZZ\} 
	\cup \{e_i^{(0)} \mid 1 \leq i \leq n\}, 
\\
& \Delta = \{ e_{i+1}^{(0)} - e_i^{(0)} \mid 1\leq i \leq n-1\} \cup \{ e_1^{(0)}\}
\end{align*}

Let $\varphi : \Phi \ra V$ be a map such that 
$\varphi(e_i^{(a)} - e_j^{(b)}) = \zeta^{a} \epsilon_i - \zeta^{b} \epsilon_j$ 
and 
$\varphi( e_i^{(a)}) = \zeta^a \epsilon_i$, 
then we see that 
$\varphi (\Phi) = \overline{\Phi}$ and $\varphi(\Delta)= \overline{\Delta}$. 


\remark 
(\roi). 
In the case where $r=2$ (in this case, $W_{n,2}$ coincides with the Weyl group of type $B_n$), 
we see that 
$\overline{\Phi}$ (resp. $\overline{\Delta}$) 
coincides with a root system (resp. a set of simple roots) 
for the  Weyl group of type $B_n$. 
Moreover, 
$\varphi(\Omega)$ coincides with the set of positive roots with respect to $\overline{\Delta}$ 
of the Weyl group of type $B_n$. 

(\roii). 
In general case, 
$\Omega$ is not a positive root in the sense of \cite{BM}, 
but $\Omega$ plays the role of positive roots. 
Moreover, in this appendix, we follow notion in \cite{RS}, 
and the definitions of $\Phi$ and $\Omega$ are different from 
them in \cite{BM}. 
See \cite[Remark 1.4]{RS} for these differences. 


\para 
For $0 \leq l \leq n$ and $\mu \vDash n-l$, 
we obtain the root system $\Phi_{(l,\mu)}$ and subsets 
$\Omega_{(l,\mu)}, \Delta_{(l,\mu)} \subset \Phi_{(l,\mu)}$ 
for the parabolic subgroup $W_{(l,\mu)}$ of $W_{n,r}$ by 
\begin{align*}
\Phi_{(l,\mu)} 
&= \{ e_i^{(a)} - e_j^{(b)} \mid 1 \leq i \not= j \leq l, \, a,b \in \ZZ / r \ZZ \} 
	\cup \{ e_i^{(a)} \mid 1\leq i \leq l, \, a \in \ZZ / r \ZZ \}
	\\ & \qquad 
{\cup} \bigcup_{p=1}^{\ell (\mu)} \{ e_i^{(0)} - e_j^{(0)} \mid  
		l + |\mu|_{p-1} +1 \leq i \not= j \leq l +  |\mu|_p \}, 
\\
\Omega_{(l,\mu)} 
&= \{ e_i^{(0)} - e_j^{(b)} \mid 1 \leq j <i \leq l, \, b \in \ZZ / r \ZZ\} \cup \{e_i^{(0)} \mid 1 \leq i \leq l \}
	\\ & \qquad 
{\cup} \bigcup_{p=1}^{\ell (\mu)} \{ e_i^{(0)} - e_j^{(0)} \mid  
		l + |\mu|_{p-1} +1 \leq j < i  \leq l + |\mu|_p \}, 
\\
\Delta_{(l,\mu)} 
&= \{ e_{i+1}^{(0)} - e_i^{(0)} \mid 1 \leq i \leq l-1 \} \cup \{e_1^{(0)} \mid l \not=0 \}
	\\ & \qquad 
{\cup} \bigcup_{p=1}^{\ell (\mu)} \{ e_{i+1}^{(0)} - e_i^{(0)} \mid  
		l + |\mu|_{p-1} +1 \leq i  \leq l + |\mu|_p -1 \}, 
\end{align*}
where we put $|\mu|_p = \sum_{k=1}^p \mu_k$ with $|\mu|_0=0$, 
and $\{ e_1^{(0)} \mid l \not=0\} = \begin{cases} \{e_i^{(0)}\} &\text{ if } l \not=0, 
	\\ \emptyset &\text{ if } l=0. \end{cases}$ 
We also define 
\begin{align*}
\wt{\Omega}_{(l,\mu)}
&= \{ e_i^{(0)} - e_j^{(b)} \mid 1 \leq j <i \leq l, \, b \in \ZZ / r \ZZ\} \cup \{e_i^{(0)} \mid 1 \leq i \leq l \}
	\\ & \qquad 
{\cup} \bigcup_{p=1}^{\ell (\mu)} \{ e_i^{(0)} - e_j^{(b)} \mid  
		l + |\mu|_{p-1} +1 \leq j < i  \leq l + |\mu|_p, \, b \in \ZZ / r \ZZ  \}. 
\end{align*}
Then we have 
$\Omega_{(l,\mu)} \subset \wt{\Omega}_{(l,\mu)}$, 
and we also have
$\Omega_{(l,\mu)} = \wt{\Omega}_{(l,\mu)}$ 
if and only if 
$\mu=(1^{n-l})$. 


\para 
For $w \in W_{n,r}$, 
let $\ell (w)$ be the smallest number $k$ such that 
$w$ is expressed as a product $w = s_{i_1} \dots s_{i_k}$ ($s_{i_j} \in \{s_0,s_1,\dots, s_{n-1}\}$). 

For $0 \leq l \leq n$ and $\mu \vDash n-l$,
we define a subsets $\mathcal{R}_{(l,\mu)}$, $\mathcal{R}_{(l,\mu)}^{\ast}$, 
$\mathcal{R}_{(l,\mu)}^0$ and $\mathcal{R}_{(l,\mu)}^{\ast 0}$ 
of $W_{n,r}$ by 
\begin{align*}
& \mathcal{R}_{(l,\mu)} = \{ w \in W \mid w (\Omega_{(l,\mu)}) \subset \Phi_0\}, 
	& \mathcal{R}_{(l,\mu)}^{\ast} = \{ w \in W \mid w^{-1} \ast \Omega_{(l,\mu)} \subset \Phi_0\}, 
\\
& \mathcal{R}_{(l,\mu)}^0 = \{ w \in W \mid w (\wt{\Omega}_{(l,\mu)}) \subset \Phi_0\}, 
	& \mathcal{R}_{(l,\mu)}^{\ast 0} = \{ w \in W \mid w^{-1} \ast \wt{\Omega}_{(l,\mu)} \subset \Phi_0\}. 
\end{align*}
Since $\Omega_{(l,\mu)} \subset \wt{\Omega}_{(l,\mu)}$, 
we have 
$\mathcal{R}_{(l,\mu)}^0 \subset \mathcal{R}_{(l,\mu)}$ 
(resp. $\mathcal{R}_{(l,\mu)}^{\ast 0} \subset \mathcal{R}_{(l,\mu)}^{\ast}$). 
The following proposition is proven in \cite[Lemma 1.27, Proposition 1.28, Corollary 1.29]{RS}. 

\begin{prop} 
\label{R Prop}
For $0 \leq l \leq n$ and $\mu \vDash n-l$, we have the following: 
\begin{enumerate} 
\item 
\begin{enumerate} 
\item 
For $w \in W_{n,r}$, 
we have 
$w (\Omega_{(l,\mu)}) \subset \Phi_0$ if and only if $w (\Delta_{(l,\mu)}) \subset \Phi_0$. 
\item 
For $w \in W_{n,r}$, 
we have 
$w^{-1} \ast \Omega_{(l,\mu)} \subset \Phi_0 $ if and only if $w^{-1} \ast \Delta_{(l,\mu)} \subset \Phi_0$.  
\end{enumerate} 

\item 
\begin{enumerate} 
\item 
For $w \in \mathcal{R}_{(l,\mu)}^0$ and $ w' \in W_{(l,\mu)}$, 
we have $\ell(w w') = \ell(w) + \ell(w')$. 
\item
For $w \in \mathcal{R}_{(l,\mu)}^{\ast}$ and $w' \in W_{(l,\mu)}$, 
we have $\ell(w' w) = \ell(w') + \ell (w)$. 
\end{enumerate}

\item 
\begin{enumerate} 
\item 
For $w \in W_{n,r}$, 
if $\ell (w)$ is minimal among all elements in $w W_{(l,\mu)}$, 
we have $w \in \mathcal{R}_{(l,\mu)}$. 

\item 
For $w \in W_{n,r}$, 
if $\ell (w)$ is minimal among all elements in $W_{(l,\mu)} w$, 
we have $w \in \mathcal{R}_{(l,\mu)}^{\ast}$. 
\end{enumerate}

\item 
\begin{enumerate} 
\item 
In the case where $\mathcal{R}_{(l,\mu)} = \mathcal{R}_{(l,\mu)}^0$, 
the set $\mathcal{R}_{(l,\mu)}$ is a complete set of coset representatives for $W_{n,r}/ W_{(l,\mu)}$. 

\item 
In the case where $\mathcal{R}_{(l,\mu)}^{\ast} = \mathcal{R}_{(l,\mu)}^{\ast 0}$, 
the set $\mathcal{R}_{(l,\mu)}^{\ast}$ is a complete set of coset representatives for $W_{(l,\mu)} \backslash W_{n,r} $. 

\end{enumerate}
\end{enumerate}
\end{prop}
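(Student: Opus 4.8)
The plan is to establish all four parts by the root-theoretic method of \cite{BM,RS}. Everything rests on one arithmetic input that is special to the group $W_{n,r}$: a combinatorial length formula expressing $\ell(w)$ as the number of ``inversions'' of $w$, i.e. as the cardinality of a finite subset of $\Phi$ recording which elements of $\Omega$ (for the $\cdot$-action) are moved out of $\Phi_0$, together with its mirror version phrased through the twisted action $\ast$ (so that left cosets are handled by $\cdot$ and right cosets by $\ast$). The companion statement I would use repeatedly is the exchange-type property: for a standard generator $s_i$ with associated root $\alpha_i \in \Delta$ one has $\ell(ws_i) > \ell(w)$ precisely when $w(\alpha_i) \in \Phi_0$, and dually $\ell(s_iw) > \ell(w)$ precisely when $w^{-1} \ast \alpha_i \in \Phi_0$. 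Granting these, the remaining arguments are formal manipulations of positive systems, with the single new ingredient being that one must keep track of the ``colours'' $a \in \ZZ/r\ZZ$ attached to the roots $e_i^{(a)}$.

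I would prove part (ii)(a) first. For $w \in \mathcal{R}^0_{(l,\mu)}$ and $w' \in W_{(l,\mu)}$ I would compute the inversion set of $ww'$ and show it decomposes as a disjoint union of the inversion set of $w$ and the $w'$-image of that part of the inversion set of $w'$ which lies inside $\Phi_{(l,\mu)}$: there is no overlap precisely because $w$ carries all of $\widetilde{\Omega}_{(l,\mu)}$ into $\Phi_0$, while $w' \in W_{(l,\mu)}$ leaves the sign (with respect to $\Phi_0$) of every root outside $\Phi_{(l,\mu)}$ unchanged. Counting cardinalities then gives $\ell(ww') = \ell(w) + \ell(w')$; this is exactly where the set $\widetilde{\Omega}_{(l,\mu)}$, rather than $\Omega_{(l,\mu)}$, is forced, and hence where the distinction between $\mathcal{R}_{(l,\mu)}$ and $\mathcal{R}^0_{(l,\mu)}$ originates. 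Part (ii)(b) follows by applying the same argument to the ``mirror'' root datum, i.e. replacing $\cdot$ by $\ast$ and $w$ by $w^{-1}$.

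For part (i)(a) the implication $w(\Omega_{(l,\mu)}) \subseteq \Phi_0 \Rightarrow w(\Delta_{(l,\mu)}) \subseteq \Phi_0$ is immediate, since $\Delta_{(l,\mu)} \subseteq \Omega_{(l,\mu)}$. For the converse I would argue directly: writing $w = x\, t_1^{a_1} \cdots t_n^{a_n}$ with $x \in \fS_n$, the hypothesis $w(\Delta_{(l,\mu)}) \subseteq \Phi_0$ translates, after running through the finitely many shapes of simple roots in $\Delta_{(l,\mu)}$, into explicit constraints on $x$ and on $(a_1, \dots, a_n)$; one then checks, again by a finite case analysis on the shapes $e_i^{(0)} - e_j^{(b)}$, $e_i^{(0)}$, $e_i^{(0)} - e_j^{(0)}$ of the roots in $\Omega_{(l,\mu)}$, that these constraints force $w(\alpha) \in \Phi_0$. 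Part (i)(b) is the $\ast$-mirror of (i)(a). Part (iii)(a) is then immediate: if $\ell(w)$ is minimal in $wW_{(l,\mu)}$ then $\ell(ws) > \ell(w)$ for every standard generator $s \in X_{(l,\mu)}$, so by the exchange property $w(\Delta_{(l,\mu)}) \subseteq \Phi_0$, and (i)(a) upgrades this to $w \in \mathcal{R}_{(l,\mu)}$; (iii)(b) is the $\ast$-mirror. Finally, for part (iv)(a), under the hypothesis $\mathcal{R}_{(l,\mu)} = \mathcal{R}^0_{(l,\mu)}$ every coset meets $\mathcal{R}_{(l,\mu)}$ by (iii)(a); and if $w, w'' \in \mathcal{R}_{(l,\mu)} = \mathcal{R}^0_{(l,\mu)}$ lie in the same coset, then writing $w'' = wv$ with $v \in W_{(l,\mu)}$ and applying (ii)(a) to both $w'' = wv$ and $w = w''v^{-1}$ forces $\ell(v) = 0$, hence $v = 1$ and $w = w''$. (If desired this can be cross-checked against $[W_{n,r} : W_{(l,\mu)}] = |W^{(l,\mu)}|$ from Lemma~\ref{W Lemma coset rep}.) Part (iv)(b) is again the $\ast$-mirror.

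The main obstacle is the length formula and the exchange property themselves. Because $W_{n,r}$ is a genuine complex reflection group and not a Coxeter group, these are not free: the dictionary between word length in $s_0, \dots, s_{n-1}$ and inversions in $\Phi$ must be built by hand, and the delicate point throughout is that the colours on the roots $e_i^{(a)}$ obstruct the naive positive-system reasoning --- this is exactly what forces the enlarged set $\widetilde{\Omega}_{(l,\mu)}$ into the picture, together with the hypothesis $\mathcal{R}_{(l,\mu)} = \mathcal{R}^0_{(l,\mu)}$ in part (iv) and the need to run the two actions $\cdot$ and $\ast$ in parallel. Once this technical core is established, parts (i)--(iv) are its routine consequences, and this is precisely the content of \cite[Lemma~1.27, Proposition~1.28, Corollary~1.29]{RS}.
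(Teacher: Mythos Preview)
The paper does not give its own proof of this proposition; it simply records that the statement is proven in \cite[Lemma~1.27, Proposition~1.28, Corollary~1.29]{RS}. Your proposal is therefore not a comparison target in the usual sense: you have sketched the argument that the paper merely cites. Your outline --- a length/inversion formula for $W_{n,r}$, the exchange-type characterisation $\ell(ws_i)>\ell(w)\Leftrightarrow w(\alpha_i)\in\Phi_0$, and the formal deductions of (i)--(iv) from these --- is exactly the methodology of \cite{RS}, and you correctly identify that the subtle point is establishing the length formula itself for a non-Coxeter group, which is where $\widetilde{\Omega}_{(l,\mu)}$ and the parallel $\cdot$/$\ast$ actions enter. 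So your proposal is consistent with the cited source and there is nothing to correct; just be aware that the paper treats this as a black box rather than reproving it.
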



\para 
Assume that 
$l \not=0$ and 
$\mu= (1^{n-l})$. 
In this case, we have $W_{(l,\mu)}= W_{l,r}$, 
$\Omega_{(l,\mu)} = \wt{\Omega}_{(l,\mu)}$, 
$\mathcal{R}_{(l,\mu)} = \mathcal{R}_{(l,\mu)}^0 $ 
and 
$\Delta_{(l,\mu)} = \{e_{i+1}^{(0)} - e_i^{(0)} \mid 1 \leq i \leq l-1\} \cup \{e_1^{(0)}\}$. 
For $x t_{l+1}^{a_{l+1}} t_{l+2}^{a_{l+2}} \dots t_n^{a_n} \in W^{(l,\mu)}$ 
and $e_{i+1}^{(0)} - e_i^{(0)}$ ($1 \leq i \leq l-1$), 
we have 
\begin{align*}
(x t_{l+1}^{a_{l+1}} t_{l+2}^{a_{l+2}} \dots t_n^{a_n} ) \cdot (e_{i+1}^{(0)} - e_i^{(0)}) 
= e_{x(i+1)}^{(0)} - e_{x(i)}^{(0)}, 
\end{align*}
and 
$x(i+1) > x(i)$ since $x \in \fS^{(l,\mu)}$ and $s_i \in S_{(l,\mu)}$. 
We also have 
\begin{align*} 
(x t_{l+1}^{a_{l+1}} t_{l+2}^{a_{l+2}} \dots t_n^{a_n} ) \cdot e_1^{(0)} = e_{x(1)}^{(0)}.
\end{align*}
Thus we see that 
$(x t_{l+1}^{a_{l+1}} t_{l+2}^{a_{l+2}} \dots t_n^{a_n} ) (\Delta_{(l,\mu)}) \subset \Phi_0$ 
for any $x t_{l+1}^{a_{l+1}} t_{l+2}^{a_{l+2}} \dots t_n^{a_n}  \in W^{(l,\mu)}$. 
Then, by Proposition \ref{R Prop} (\roi), 
we have that 
$W^{(l,\mu)} \subset \mathcal{R}_{(l,\mu)}$. 
On the other hand, 
$W^{(l,\mu)}$ (resp. $\mathcal{R}_{(l,\mu)}$ ) 
is a complete set of representatives for $W/ W_{(l,\mu)}$ 
by Lemma \ref{W Lemma coset rep} (resp. Proposition \ref{R Prop} (\roiv)). 
Thus we have $W^{(l,\mu)} = \mathcal{R}_{(l,\mu)}$ if $\mu=(1^{n-l})$. 
Similarly, we have $^{(l,\mu)} W = \mathcal{R}_{(l,\mu)}^{\ast}$ if $\mu=(1^{n-l})$. 
Moreover we have 
$^{(l,\mu)} W^{(m,\nu)} = ^{(l,\mu)} W \cap W^{(m,\nu)} $ if $\nu =(1^{n-m})$ by Lemma \ref{W Lemma nu 1n-m}. 
As a consequence, 
we have the following corollary.

\begin{cor} 
Assume that $\mu=(1^{n-l})$ and $\nu =(1^{n-m})$. 
Then we have 
$^{(l,\mu)} W = \mathcal{R}_{(l,\mu)}^{\ast}$ and $W^{(m,\nu)} = \mathcal{R}_{(m,\nu)}$. 
Moreover, 
$\mathcal{R}_{(l,\mu)}^{\ast}  \cap \mathcal{R}_{(m,\nu)} = \,^{(l,\mu)} W \cap W^{(m,\nu)}$ 
is a complete set of representatives for $W_{(l,\mu)} \backslash W_{n,r} / W_{(m,\nu)}$. 
\end{cor}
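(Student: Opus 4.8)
The plan is to assemble this corollary entirely from facts already in place, so that the proof amounts to chaining three equalities together in the right order. First I would establish the two identities $^{(l,\mu)}W = \mathcal{R}_{(l,\mu)}^{\ast}$ (for $\mu=(1^{n-l})$) and $W^{(m,\nu)} = \mathcal{R}_{(m,\nu)}$ (for $\nu=(1^{n-m})$). These are precisely the statements worked out in the paragraph preceding the corollary: since $\nu=(1^{n-m})$ forces $\Omega_{(m,\nu)} = \wt{\Omega}_{(m,\nu)}$, hence $\mathcal{R}_{(m,\nu)} = \mathcal{R}_{(m,\nu)}^0$, one checks by the explicit action on the simple roots $\Delta_{(m,\nu)} = \{e_{i+1}^{(0)} - e_i^{(0)}\} \cup \{e_1^{(0)}\}$ that every element of $W^{(m,\nu)}$ carries $\Delta_{(m,\nu)}$ into $\Phi_0$; Proposition \ref{R Prop}(i) upgrades this to $W^{(m,\nu)} \subset \mathcal{R}_{(m,\nu)}$, and since both sides are complete sets of coset representatives for $W_{n,r}/W_{(m,\nu)}$ by Lemma \ref{W Lemma coset rep} and Proposition \ref{R Prop}(iv), they coincide. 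The identity $^{(l,\mu)}W = \mathcal{R}_{(l,\mu)}^{\ast}$ is obtained in the mirror-image fashion, using the $\ast$-action together with the left-handed halves of Proposition \ref{R Prop}.

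Granting these, intersecting the two equalities inside $W_{n,r}$ gives at once
\[
 \mathcal{R}_{(l,\mu)}^{\ast} \cap \mathcal{R}_{(m,\nu)} = \,^{(l,\mu)}W \cap W^{(m,\nu)} .
\]
Next I would invoke Lemma \ref{W Lemma nu 1n-m}: since $\nu=(1^{n-m})$, the $\fS_{\tau(x)}$-orbit $O(u)$ is a singleton for every $u$, so the minimality condition defining $^{(l,\mu)}W^{(m,\nu)}$ is vacuous and $^{(l,\mu)}W^{(m,\nu)} = \,^{(l,\mu)}W \cap W^{(m,\nu)}$. Finally, the proposition of \S\ref{cpxref} asserting that $^{(l,\mu)}W^{(m,\nu)}$ is a complete set of representatives of $W_{(l,\mu)}\backslash W_{n,r}/W_{(m,\nu)}$ closes the argument, since the two preceding equalities identify that set with $\mathcal{R}_{(l,\mu)}^{\ast} \cap \mathcal{R}_{(m,\nu)}$.

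The only point requiring a word of care is the degenerate case $l=0$ (or $m=0$), which the preceding paragraph excludes: there $W_{(0,(1^n))}$ is the trivial group, $\Omega_{(0,(1^n))} = \emptyset$, and both $^{(0,(1^n))}W$ and $\mathcal{R}_{(0,(1^n))}^{\ast}$ equal all of $W_{n,r}$, so the first-step identities hold trivially and nothing in the chaining changes. Beyond that I do not expect any real obstacle: the substance of the corollary is just the compatibility of the combinatorially defined coset systems $^{(l,\mu)}W$, $W^{(m,\nu)}$ of \S\ref{cpxref} with the root-theoretic systems $\mathcal{R}_{(l,\mu)}^{\ast}$, $\mathcal{R}_{(m,\nu)}$ of \cite{RS}, and that compatibility has already been reduced to the two identities proved in the first step.
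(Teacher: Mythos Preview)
Your proposal is correct and follows exactly the route taken in the paper: the preceding paragraph already derives $W^{(l,\mu)} = \mathcal{R}_{(l,\mu)}$ and $^{(l,\mu)}W = \mathcal{R}_{(l,\mu)}^{\ast}$ when $\mu=(1^{n-l})$, and the corollary is obtained by intersecting and invoking Lemma~\ref{W Lemma nu 1n-m} together with the proposition that $^{(l,\mu)}W^{(m,\nu)}$ represents the double cosets. Your explicit treatment of the degenerate case $l=0$ (or $m=0$), which the paper's paragraph excludes with its standing assumption $l\neq 0$, is a welcome clarification and is handled correctly.
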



\remark 
\label{Remark root system}
(\roi). 
In the case where $\mu \not= (1^{n-l})$, 
there exists $i >l$ such that $e_{i+1}^{(0)} - e_i^{(0)} \in \Delta_{(l, \mu)}$. 
For $e_{i+1}^{(0)} - e_i^{(0)} \in \Delta_{(l,\mu)}$ such that $i >l$ and  
$x t_{l+1}^{a_{l+1}} \dots t_n^{a_n} \in W^{(l,\mu)}$, 
we have 
\begin{align*}
(x t_{l+1}^{a_{l+1}} \dots t_n^{a_n}) \cdot (e_{i+1}^{(0)} - e_i^{(0)})
= e_{x(i+1)}^{(a_{i+1})} - e_{x(i)}^{(a_i)},  
\end{align*}
and $x(i+1) > x(i)$ since $x \in \fS^{(l,\mu)}$ and $s_i \in S_{(l,\mu)}$. 
Moreover, 
$e_{x(i+1)}^{(a_{i+1})} - e_{x(i)}^{(a_i)}  \not\in \Phi_0$ if $a_{i+1} \not=0$. 
Thus, we see that $W^{(l,\mu)} \not\subset \mathcal{R}_{(l,\mu)}$ if $\mu \not=(1^{n-l})$. 
Similarly, we have $^{(l,\mu)} W \not\subset \mathcal{R}_{(l,\mu)}^{\ast}$ if $\mu \not=(1^{n-l})$. 

(\roii). 
In general case, 
we do not know if we can characterize the set $W^{(l,\mu)}$ 
(or another complete set of representatives for $W/ W_{(l,\mu)}$) 
by using the root system $\Phi$. 



\bibliographystyle{halpha} 
\bibliography{refs}
\end{document}